\documentclass[11pt,a4paper,reqno]{amsart} 
\usepackage[margin=1in]{geometry}
\usepackage[utf8]{inputenc}
\usepackage[titletoc]{appendix}
\usepackage{amsfonts,amssymb,amscd,graphicx, amsmath,hyperref,subcaption,lscape}
\usepackage{fancybox}
\usepackage{color,float} 
\usepackage{cite} 
\usepackage{setspace}
\setcounter{tocdepth}{5}
\usepackage{caption}
\usepackage{mathrsfs}
\captionsetup[figure]{font=small}
\DeclareMathOperator{\sech}{sech}

\usepackage[shortlabels]{enumitem}
\usepackage{xcolor}
\usepackage{tikz}
\mathchardef\ordinarycolon\mathcode`\:
\mathcode`\:=\string"8000
\begingroup \catcode`\:=\active
\gdef:{\mathrel{\mathop\ordinarycolon}}
\endgroup
\makeatletter
\renewcommand*\env@matrix[1][\arraystretch]{%
	\edef\arraystretch{#1}%
	\hskip -\arraycolsep
	\let\@ifnextchar\new@ifnextchar
	\array{*\c@MaxMatrixCols c}}
\makeatother
\renewcommand{\vec}[1]{\mathbf{#1}}
\usepackage{graphicx}
\usepackage{amsthm}
\usepackage{rotating}
\newtheorem{theorem}{Theorem}[section]
\newtheorem{proposition}[theorem]{Proposition}

\newtheorem{lemma}[theorem]{Lemma}
\theoremstyle{definition}

\newtheorem{remark}[theorem]{Remark}
\newtheorem{example}[theorem]{Example}
\title[Stability of bimodal planar linear switched systems]{Stability of bimodal planar linear switched systems}
\author[Swapnil Tripathi]{Swapnil Tripathi}
\address{Department of Mathematics\\
	Indian Institute of Science Education and Research Bhopal\\
	Bhopal Bypass Road, Bhauri \\
	Bhopal 462 066, Madhya Pradesh\\
	India}
\email{swapnil93@iiserb.ac.in}
\author[Nikita Agarwal]{Nikita Agarwal}
\address{Department of Mathematics\\
	Indian Institute of Science Education and Research Bhopal\\
	Bhopal Bypass Road, Bhauri \\
	Bhopal 462 066, Madhya Pradesh\\
	India}
\email{nagarwal@iiserb.ac.in}

\date{}

\begin{document} 
	\maketitle

	\begin{abstract} 
		We consider bimodal planar switched linear systems and obtain dwell time bounds which guarantee their asymptotic stability. The dwell time bound obtained is a smooth function of the eigenvectors and eigenvalues of the subsystem matrices. An optimal scaling of the eigenvectors is used to strengthen the dwell time bound. A comparison of our bounds with the dwell time bounds in the existing literature is also presented.  \\
	\end{abstract}
	
	\noindent \textbf{Keywords}: Piecewise continuous dynamical systems, control theory, dwell time, stability, asymptotic stability\\
	\noindent \textbf{2010 Mathematics Subject Classification}: 37N35 (Primary); 93C05, 93D20 (Secondary)
	
	\section{Introduction}
	
	A switched system is a dynamical system which exhibits both discrete and continuous dynamic behaviour. It comprises of a family of continuous time subsystems and a rule describing switching between the subsystems. Such systems are of interest since a large number of physical systems have several modes or states, that is, several dynamical systems are required to describe their behaviour. One of the simplest example of a switched system is a ball bouncing on the floor, see~\cite{liberzon2003switching}. A lot of control engineering applications have switched systems as their underlying model, see~\cite{zhang2010switched,xu2011fuel,belykh2014evolving}. 
	
There are two kinds of switching: state dependent and time dependent. In state dependent switching, the state space of the system is divided into regions. In each region, only one subsystem is active. In this paper, we will focus on time dependent switching between subsystems where a right continuous piecewise constant function, known as a \textit{switching signal}, determines which state is active at which time instant. 

It is interesting to note that a switched system may be unstable even if all the subsystems are stable. Moreover even when some or all the subsystems are unstable, a switched system may be stable. For a time dependent switched system, there are mainly two kinds of stability issues~\cite{liberzon1999basic}: stability under arbitrary switching and stability under constrained switching, such as dwell time constrained and average dwell time constrained. For a linear switched system where all the subsystems are linear, for stability under arbitrary switching, one focuses on finding conditions on the subsystem matrices which guarantee stability of the switched system under any switching strategy. This problem has been extensively explored over the past three decades, see~\cite{narendra1994common,liberzon1999stability,agrachev2001lie,sun2005convergence,mori1997solution,yang2012sufficient,xiang2016necessary}. A survey of results pertaining to the arbitrary switching problem using Lyapunov-based methods is presented in~\cite{lin2009stability}. 

Under constrained switching, the focus is on finding conditions on the switching signal in terms of the subsystems which guarantee stability of the switched system. One of the first works in the direction of stability under constrained switching is discussed in~\cite{morse1996supervisory} in which the concept of \textit{dwell time} was introduced. The authors prove that a switched system with all stable subsystems is stable if large enough time is spent in a subsystem after each switching instance, referred to as \textit{slow switching}. This concept was later generalized to that of \textit{average dwell time} in~\cite{hespanha1999stability} where a similar result was proved. For more results related to constrained switching, we refer to~\cite{margaliot2006stability,morse1996supervisory,geromel2006stability,chesi2010computing}. These results do not provide an explicit expression for the dwell time in terms of the subsystem properties. However, in~\cite{karabacak2009dwell,karabacak2013dwell}, the authors study graph dependent linear switched systems to obtain a dwell time constrained in terms of subsystem matrices, specifically in terms of the distance between eigenvector sets and the cycle ratio of the underlying graph. The theory was further extended and the concept of a \textit{simple loop dwell time} was introduced in~\cite{agarwal2018simple}. This concept allowed for a slow-fast switching mechanism to ensure stability of the switched system. The graph dependent switched systems find use in electrical and power grid systems, where the underlying graph structure varies with time~\cite{belykh2014evolving}.
	
	We refer to~\cite{zhai2001stability,yang2009stabilization,agarwal2019stabilizing} for results about linear switched systems for which not all subsystems are stable. In these works, the primary focus is on stabilizing the switched system. It is noteworthy that the results in~\cite{yang2009stabilization} consider non-linear subsystems as well. For a study of linear switched systems having all unstable subsystems, see~\cite{agarwal2019stabilizing,xiang2014stabilization}. In the latter work, a sufficient condition ensuring stability of switched systems with all unstable subsystems is presented. For all major developments in this rich area, we refer to~\cite{decarlo2000perspectives, sun2006switched} and the references cited therein.
	
	The purpose of this paper is to further improve the dwell time bounds obtained in~\cite{karabacak2009dwell,karabacak2013dwell,agarwal2018simple} for the class of \textit{bimodal planar switched linear systems} -- a switched system on the plane $\mathbb{R}^2$ with two linear subsystems. The dwell time bound obtained in this paper is a smooth function of the eigenvectors and eigenvalues of the subsystem matrices. An optimal scaling of the eigenvectors is used to strengthen the dwell time bound. We also present a comparison of our bounds with the dwell time bounds which exist in the literature. The class of bimodal planar switched linear systems exhibit rich dynamical behaviour. We refer to~\cite{liberzon2003switching,iwatani2006stability} for several examples exhibiting interesting behavior in such systems. Though practical models are usually not linear, studying this class of systems is useful for gaining insight into its non-linear counterpart, which occurs in practice. In~\cite{palejiya2013stability}, an integrated wind turbine and battery system is modeled using a bimodal planar system, and stability issues are discussed using linearization of the subsystems. In the literature, the issues of stability, stabilizability and controllability (in the presence of control) have been studied specifically for bimodal setting. A necessary and sufficient condition for stability under arbitrary switching is given in~\cite{balde2009note} for bimodal planar systems, and in~\cite{eldem2009stability} for bimodal linear systems in $\mathbb{R}^3$. For an overview of results on stabilizability and controllability of bimodal planar linear switched systems, see~\cite{sename2013robust}. We refer to~\cite{iwatani2006stability} for a detailed study of stability and stabilization of multimodal planar linear switched systems and \cite{sun2010stability} for piecewise linear systems. 
	
	\subsection{A switched system}\label{sec:details}
	Given a set of $n\times n$ matrices $\{A_1,\dots,A_p\}$, a \textit{continuous time linear switched system} is defined as
	\begin{eqnarray}\label{eq:system}
		\dot{x}(t)&=&A_{\sigma(t)}x(t), 
	\end{eqnarray}
	where $\sigma:[0,\infty)\rightarrow \{1,\dots,p\}$ is a right continuous piecewise constant function which determines the active subsystem at time instance $t$. The function $\sigma$ is known as a \textit{switching signal}. The flow of the switched system is given by 
	\[
	x(t)=e^{A_{\sigma_i}(t-d_{i-1})}\left(\prod_{k=1}^{i-1}e^{A_{\sigma_k}\Delta_k}\right)x(0),\,\,\,\,\,\, t\in[d_{i-1},d_i),
	\]
	where $d_k$ denotes the $k^{th}$ discontinuity of $\sigma$, $\Delta_k=d_k-d_{k-1}$ denotes the time spent by the switched system in the $k^{th}$ subsystem, and $\sigma_k$ denotes the index of the subsystem active during $[d_{k-1},d_k)$. The switched system~\eqref{eq:system} is said to be 
		\begin{enumerate}
			\item \textit{stable} if for every $\epsilon>0$, there exists $\delta>0$ such that $\| x(0)\|<\delta$ implies $\|x(t)\|<\epsilon$ for all $t>0$;
			\item \textit{asymptotically stable} if it is stable and $\Vert x(t)\Vert\to 0$ as $t\to\infty$ for every initial vector $x(0)\in\mathbb{R}^n$, and
			\item \textit{exponentially stable} if there exist $\alpha,\beta>0$ such that $\|x(t)\|\le \alpha{\rm{e}}^{-\beta t}\|x(0)\|$ for all $t>0$, for every initial vector $x(0)\in\mathbb{R}^n$.
		\end{enumerate} 
	
	For switched linear systems, asymptotic stability and exponential stability are the same notions, refer~\cite[Lemma 1]{hespanha2004uniform}. An asymptotically stable system is stable, but the converse is not true. 
	
	In~\cite{hespanha1999stability}, authors prove that if all the subsystems are asymptotically stable, there is a $\tau>0$ such that if $\Delta_k\ge \tau$, for all $k\ge 1$, then the switched system~\eqref{eq:system} is asymptotically stable. The time $\tau$ that the \textit{switching signal} $\sigma$ spends in each subsystem before switching to another subsystem is known as the \textit{dwell time}. It is an ongoing effort of the researchers in this area to obtain the least dwell time possible, see \cite{morse1996supervisory,geromel2006stability,chesi2010computing,chesi2011nonconservative}.
	
	\subsection{Problem setting}	
	As mentioned earlier, we are going to focus on \textit{planar linear switched systems}~\eqref{eq:system} with only two subsystems ($n=p=2$). Consider two planar matrices $A_1$ and $A_2$ which are both stable (Hurwitz), that is, both the eigenvalues have negative real part. For $i=1,2$, let $J_i$ be the real Jordan form of $A_i$, that is, there exists invertible matrix $P_i$ such that $A_i=P_i J_i P_i^{-1}$. It is enough to consider the switching signal $\sigma$ with $\sigma(0)=1$. Let $(d_j)$ be the set of discontinuities of $\sigma$ with $d_i<d_{i+1}$, for all $i\in\mathbb{N}$. We assume that $d_k\rightarrow \infty$ as $k\rightarrow \infty$ (no zeno behavior). Suppose the switched system spends times $t_j>0$ and $s_j>0$, $j\ge 1$, on consecutive switchings corresponding to subsystem matrices $A_1$ and $A_2$, respectively, as shown in Figure~\ref{fig:signal}.
	\begin{figure}[h!]
		\centering
		\includegraphics[scale=0.3]{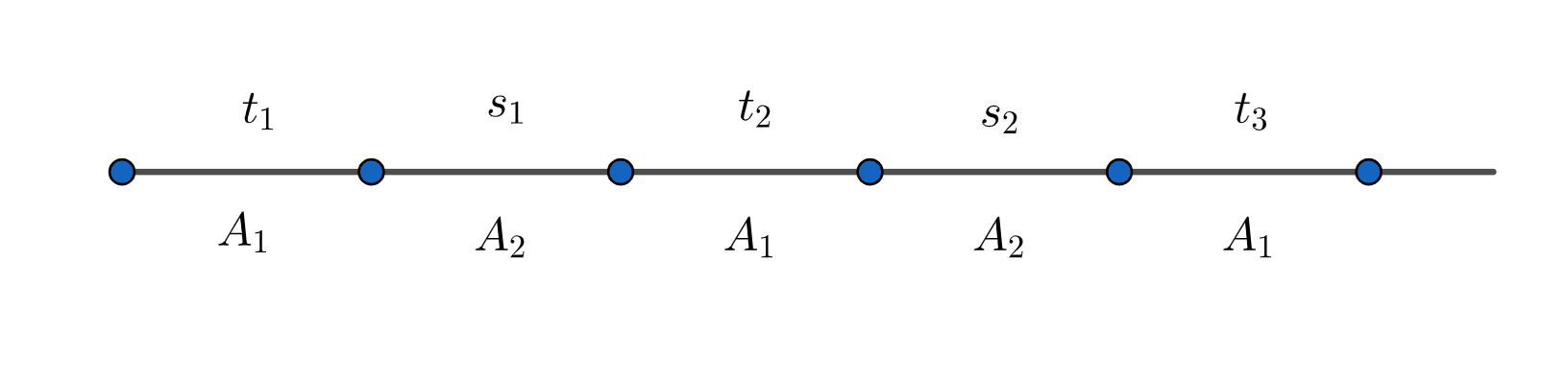}
		\caption{Switching signal $\sigma$ with $\sigma(0)=1$.}
		\label{fig:signal}
	\end{figure}
	
	Consider the following classes of signals:
	\begin{align}\label{eq:signalclass}
			S_{\tau}=\{\, 
			&\sigma:[0,\infty)\rightarrow \{1,2\}\ \vert\  \sigma(0)=1,\ t_j\ge \tau,\ s_j\ge \tau, \ \text{for all } j\in\mathbb{N}\, \},\text{ and}\\
		S_{\tau}'=\{\, 
		&\sigma\in S_{\tau}\ \vert \ \text{ there exists an infinite subset }\Gamma \text{ of } \mathbb{N}\text{ such that }(\tau,\tau) \text{ is not } \nonumber \\
		& \text{a limit point of the set } \{(t_\gamma,s_\gamma)\}_{\gamma\in\Gamma}\, \}. 
	\end{align}
		
		Our goal is to find $\tau>0$ such that the switched system~\eqref{eq:system} is stable for all signals in $S_{\tau}$, and asymptotically stable for all signals in $S'_{\tau}$. Note that $S_\zeta\subset S'_{\tau}$ for all $\zeta>\tau$.
		
	The flow of the switched system~\eqref{eq:system} is given by
	\begin{eqnarray}\label{eq:flow}
	x(t) &=&
	\left\{\begin{aligned}
		& {\rm{e}}^{A_1 (t-d_{2j})}\prod_{i=1}^j \left({\rm{e}}^{A_2 s_i}{\rm{e}}^{A_1 t_i}\right) x(0), & t & \in[d_{2j},d_{2j+1}),\\
		& {\rm{e}}^{A_2(t-d_{2j+1})}{\rm{e}}^{A_1 t_{j+1}}\prod_{i=1}^j \left({\rm{e}}^{A_2 s_i}{\rm{e}}^{A_1 t_i}\right) x(0), &  t & \in[d_{2j+1},d_{2j+2}).
	\end{aligned}\right.
	\end{eqnarray}

	If $\sigma\in S_{\tau}$, then
	\begin{equation}\label{eq:oldform-12}
		\|x(t)\|\le \eta_{1,2}\xi_{1,2} \left(\prod_{i=1}^j \left\|M^{-1}{\rm{e}}^{J_2 s_i}\,M {\rm{e}}^{J_1 t_i} \right\|\right)\|x(0)\|, \ t\in[d_{2j},d_{2j+2}),
	\end{equation}
	where $\xi_{1,2}=\sup_{t\in[\tau,\infty)}\left\{\left\|{\rm e}^{J_1  t}\right\|,\left\|{\rm e}^{J_2 t}\right\|\,\left\|{\rm e}^{J_1  t}\right\|\right\}$, $\eta_{1,2}=\max\{\|P_1\|\|P_1^{-1}\|,\|P_2\|\|P_2^{-1}P_1\|\|P_1^{-1}\|\}$, and $M=P_2^{-1}P_1$. Note that $\xi_{1,2}$ is finite since both $A_1$ and $A_2$ are Hurwitz. 
	
	The terms in~\eqref{eq:flow} can be grouped differently to obtain
	\begin{equation}\label{eq:oldform-21}
		\|x(t)\|\le \eta_{2,1}\xi_{2,1} \left(\prod_{i=1}^j \left\|M{\rm{e}}^{J_1 t_{i+1}}\,M^{-1} {\rm{e}}^{J_2 s_i} \right\|\right)\|x(0)\|, \text{ for }t\in[d_{2j},d_{2j+2}),
	\end{equation}
where $\xi_{2,1}=\sup_{t\in[\tau,\infty)}\left\{\left\|{\rm e}^{J_2  t}\right\|,\left\|{\rm e}^{J_1 t}\right\|\,\left\|{\rm e}^{J_2  t}\right\|\right\}$, $\eta_{2,1}=\max\{\|P_2\|\|P_2^{-1}\|,\|P_1\|\|P_1^{-1}P_2\|\|P_2^{-1}\|\}$.

In \cite{agarwal2018simple,karabacak2013dwell,karabacak2009dwell}, the authors consider $P_1$ and $P_2$ with unit column norms. This condition is restrictive and the dwell time bounds are better when we work with general matrices $P_1$, $P_2$, see~\cite{agarwal2019stabilizing} for instance. Further two Jordan decompositions $PJP^{-1}$ and $P'JP'^{-1}$ of a given matrix $A$ are related through an invertible diagonal matrix. That is, $P'=PD$, for some invertible diagonal matrix $D$. We will call $D$ as a \textit{scaling matrix}. If $J$ is a diagonal matrix, that is, if $A$ is a diagonalizable matrix, then if $A=PJP^{-1}$, then for any diagonal matrix $D$ and $P'=PD$, $A=P'JP'^{-1}$. On the other hand, when $J$ has complex eigenvalues, then any scaling matrix is a constant multiple of the identity matrix. Since both~\eqref{eq:oldform-12} and~\eqref{eq:oldform-21} remain unaltered by changing $P_i$ to a constant multiple of itself, we can choose to work with any fixed choice of $P_i$. When $J$ is defective, that is, the matrix $A$ has repeated eigenvalues with one-dimensional eigenspace, there are several choices for $P$ which arise due to reasons other than scaling, refer to Remark~\ref{nn:optimal}.
	
	The notation $M=P_2^{-1}P_1=\begin{pmatrix}
		a&b\\ c&d
	\end{pmatrix}$ will be fixed throughout the paper and the matrix $M$ will be called the \textit{transition matrix}. For $i=1,2$, the scaling matrix for $P_i$ will be denoted as $D_i=\text{diag}(\lambda_i,\mu_i)$. The matrix $M_{D_1,D_2}=D_2^{-1}P_2^{-1}P_1D_1=\begin{pmatrix}
		a\lambda_1/\lambda_2 & b\mu_1/\lambda_2 \\ c\lambda_1/\mu_2 & d\mu_1/\mu_2
	\end{pmatrix}$ will be called the \textit{scaled transition matrix}, and as we will see, there will be an optimal choice of $D_1$ and $D_2$ among all the possible scaled transition matrices which will give the least dwell time bound. Thus by fixing $P_1$ and $P_2$, thereby $M$,~\eqref{eq:oldform-12} and~\eqref{eq:oldform-21} take the following form:
	\begin{align}
		\|x(t)\| & \le \eta_{1,2}^{(D_1,D_2)}\xi_{1,2} \left(\prod_{i=1}^j \left\|M_{D_1,D_2}^{-1}{\rm{e}}^{J_2 s_i}\,M_{D_1,D_2} {\rm{e}}^{J_1 t_i} \right\|\right)\|x(0)\|, \ t\in[d_{2j},d_{2j+2}), \label{eq:newform-12}\\
		\|x(t)\| & \le \eta_{2,1}^{(D_1,D_2)}\xi_{2,1} \left(\prod_{i=1}^j \left\|M_{D_1,D_2}{\rm{e}}^{J_1 t_{i+1}}\,M_{D_1,D_2}^{-1} {\rm{e}}^{J_2 s_i} \right\|\right)\|x(0)\|, \ t\in[d_{2j},d_{2j+2}). \label{eq:newform-21}
	\end{align} 
	
	Note that $A_1$ and $A_2$ are stable subsystems. We will find an expression for the dwell time $\tau$ in terms of the entries of transition matrix $M$ and the eigenvalues of subsystem matrices $A_1,A_2$. For this, we find $\tau_{1,2}(D_1,D_2)>0$ and $\tau_{2,1}(D_1,D_2)>0$ such that 
	\begin{eqnarray}\label{eq:tau}
		\left\|M_{D_1,D_2}^{-1}{\rm{e}}^{J_2 s}\,M_{D_1,D_2} {\rm{e}}^{J_1 t} \right\| &<&1, \text{ for all } t,s>\tau_{1,2}(D_1,D_2), \nonumber \\
		\left\|M_{D_1,D_2}{\rm{e}}^{J_1 t}\,M_{D_1,D_2}^{-1} {\rm{e}}^{J_2 s} \right\| &<& 1, \text{ for all } t,s>\tau_{2,1}(D_1,D_2).
	\end{eqnarray}

	It is possible to choose matrices $P_1,P_2$ such that the determinant of the transition matrix $M$ is either $1$ or $-1$. Thus, without loss of generality, we will assume that the determinant of $M$ is either $1$ or $-1$ since it does not affect the left hand side of the above inequalities~\eqref{eq:tau}. Moreover, we choose the scaling matrices such that $\det(D_1),\ \det(D_2)=\pm 1$ for the same reason. This choice makes $D_i$ vary over matrices of the form $\text{diag}\left(\lambda_i,\pm 1/\lambda_i \right)$ for $i=1,2$.
	
	\begin{remark}\label{rmk:stab}
		Among all the possible choices of scaling matrices $D_1,D_2$, we will be able to choose optimal scaling matrices giving the least possible values of $\tau_{1,2}(D_1,D_2)$ and $\tau_{2,1}(D_1,D_2)$, denoted as $\tau_{1,2}$ and $\tau_{2,1}$, respectively. 
		\begin{enumerate}
			\item For this optimal choice of $(D_1,D_2)$, we will observe that for $(\ell,k)=(1,2)$ or $(2,1)$, when $\sigma\in S_{\tau_{\ell,k}}$, for all $t>0$, 
				\[
				\|x(t)\|\le \eta_{\ell,k}^{(D_1,D_2)}\xi_{\ell,k}\|x(0)\|,
				\] 
				where $\eta_{\ell,k}^{(D_1,D_2)}=\max\{\|P_\ell D_\ell\|\|D_\ell^{-1}P_\ell^{-1}\|,\|P_k D_k\|\|D_k^{-1}P_k^{-1}P_\ell D_\ell\|\|D_\ell^{-1}P_\ell^{-1}\|\}$. This implies stability of the switched system~\eqref{eq:system} for all $\sigma\in S_{\tau_{\ell,k}}$. 
				\item For this optimal choice of $(D_1,D_2)$, we will observe that the first inequality in~\eqref{eq:tau} becomes non-strict in the region $t,s\ge \tau_{1,2}(D_1,D_2)$ with equality possible only when $t=s=\tau_{1,2}(D_1,D_2)$. Moreover the second inequality in~\eqref{eq:tau} becomes non-strict in the region $t,s\ge \tau_{2,1}(D_1,D_2)$ with equality possible only when $t=s=\tau_{2,1}(D_1,D_2)$. Thus for $\sigma\in S'_{\tau_{\ell,k}}$, $\|x(t)\|$ is bounded above by a scalar multiple of $\rho^r$, for some $\rho<1$, where $r$ is cardinality of the set $\Gamma\cap\{1,\dots,j\}$. This implies asymptotic stability of the switched system~\eqref{eq:system}, since $t\to \infty$ implies $r\to\infty$. Due to this, the switched system~\eqref{eq:system} will be asymptotic stable for all $\sigma \in S'_{\tau_{\ell,k}}$.
			\end{enumerate}
				\end{remark}
			
			\subsection{Definitions and notations}
			
	A $n\times n$ matrix $A$ is called \textit{Schur stable} if its spectral radius $\rho(A)<1$. A planar matrix $A$ is Schur stable if and only if $\lvert\text{tr}(A)\rvert<1+\text{det}(A)$ and $\lvert\text{det}(A)\rvert<1$, see~\cite{fleming1998schur}. These two equivalent conditions for Schur stability will be referred to as \textit{Schur's conditions} in this paper. Further, for any matrix $K$, $\|K\|<1$ if and only if $K^\top K$ is Schur stable. 
	
	For $K=M_{D_1,D_2}^{-1}{\rm{e}}^{J_2 s}\,M_{D_1,D_2} {\rm{e}}^{J_1 t}$, or $M_{D_1,D_2}{\rm{e}}^{J_1 t}\,M_{D_1,D_2}^{-1} {\rm{e}}^{J_2 s}$, using Schur stability, $\|K\|<1$ is equivalent to $\lvert\text{tr}(K^\top K)\rvert<1+\text{det}(K^\top K)$ and $\lvert \det(K^\top K)\rvert<1$ being satisfied simultaneously. Here $K^\top$ denotes the transpose of the matrix $K$. The condition $\lvert \det(K^\top K)\rvert<1$ determines a region which we will refer to as the \textit{feasible region}. Moreover, the zero level curve of $\lvert\text{tr}(K^\top K)\rvert-1-\text{det}(K^\top K)=0$ arising from the other condition will be simplified and studied. This simplified function will be called the \textit{Schur's function}.
	
	We will denote the open first quadrant by $Q_1$; the partial derivative of a function $f$ with respect to $t$ will be denoted by $(\partial/\partial t)f$ or $f_t$.
	
\section{Organization of the paper}\label{sec:organization}
A planar Hurwitz matrix has three possible Jordan forms 
\[
\begin{pmatrix}
	-p & 0 \\ 0&-q
\end{pmatrix}, \ \begin{pmatrix}
	-\alpha & \beta \\ -\beta&-\alpha
\end{pmatrix}, \begin{pmatrix}
	-n & 1 \\ 0&-n
\end{pmatrix},
\]
where $p,q,\alpha,n>0$, $\beta\ne 0$. 

	Let $J_1$ and $J_2$ be the real Jordan forms of $A_1$ and $A_2$, respectively. If $J_i=\text{diag}(-p_i,-q_i)$ (that is, $A_i$ is real diagonalizable), for $i=1,2$, we will assume $p_i\ne q_i$, since otherwise the subsystems commute and hence the switched system~\eqref{eq:system} is stable. 
	
	The sections have been divided according to the forms of the real Jordan forms of the subsystem matrices, as follows:
	\begin{enumerate}
		\item Both subsystem matrices $A_1,A_2$ are real diagonalizable, discussed in Section~\ref{sec:RR}.
		\item Both subsystem matrices $A_1,A_2$ have complex eigenvalues, discussed in Section~\ref{sec:CC}.
		\item One of the subsystem matrices is real diagonalizable and the other has complex eigenvalues, discussed in Section~\ref{sec:RC}. In this section, we assume that $A_1$ is real diagonalizable and $A_2$ has complex eigenvalues.
		\item Both subsystem matrices $A_1, A_2$ are defective, discussed in Section~\ref{sec:NN}.
		\item One of the subsystem matrices is defective and the other has complex eigenvalues, discussed in Section~\ref{sec:NC}. In this section, we assume that $A_1$ is defective and $A_2$ has complex eigenvalues.
		\item One of the subsystem matrices is defective and the other is real diagonalizable, discussed in Section~\ref{sec:NR}. In this section, we assume that $A_1$ is defective and $A_2$ is real diagonalizable.
	\end{enumerate}

Observe that the above list takes care of all possible Jordan form combinations of the subsystem matrices. In each of the sections, we give results to compute $\tau_{1,2}$ and $\tau_{2,1}$. Using~\eqref{eq:newform-12},~\eqref{eq:newform-21},~\eqref{eq:tau}, and Remark~\ref{rmk:stab}, the switched system~\eqref{eq:system} is stable for all signals $\sigma\in S_{\tau_{1,2}}\bigcup S_{\tau_{2,1}}$ and is asymptotically stable for all signals $\sigma\in S_{\tau_{1,2}}'\bigcup S_{\tau_{2,1}}'$. Hence if $\tau=\min\{\tau_{1,2},\tau_{2,1}\}$, then for each $\sigma\in S_\tau$, the switched system~\eqref{eq:system} is stable and for each $\sigma\in S_\tau'$, the switched system~\eqref{eq:system} is asymptotically stable. 
	
The results of~\cite{karabacak2009dwell,karabacak2013dwell,agarwal2018simple} can also be applied to compute dwell time $\tau$ for the system~\eqref{eq:system}. These results make use of \emph{complex} Jordan basis matrices instead of \emph{real} Jordan basis matrices. However it turns out that the the results in~\cite{karabacak2009dwell,karabacak2013dwell,agarwal2018simple} remain unaltered when real Jordan basis matrices are used. Further due to the estimates we use and the introduction of scaling matrices, our results give better dwell time bounds than those in \cite{karabacak2009dwell,karabacak2013dwell,agarwal2018simple}. In fact both $\tau_{1,2}$ and $\tau_{2,1}$ obtained here are lower than the dwell time bounds obtained in the existing literature cited above.	

In Section~\ref{sec:comparison}, we give a comparison between our dwell time bounds with those in the current literature. A particular scenario where the two subsystems share a common eigenvector is discussed in Section~\ref{sec:examples}. In that section, we also generalize our results to the setting of a multimodal planar system in which the switching between the subsystems is governed by a flower-like graph.
	
	\section{Both $A_1$ and $A_2$ are real diagonalizable}\label{sec:RR}

In this section, we assume that both the subsystem matrices $A_1,A_2$ have distinct eigenvalues, in which case, $J_i= \text{diag}(-p_i,-q_i)$, where $0<p_i <q_i$, for $i=1,2$. Recall, for $i=1,2$, $D_i=\text{diag}(\lambda_i,\pm 1/\lambda_i)$, $M=P_2^{-1}P_1$ with $\det(M)=\pm 1$, and $M_{D_1,D_2}=D_2^{-1}MD_1$. This section is divided into two subsections, the first one is devoted to computing $\tau_{1,2}$ and the second one to $\tau_{2,1}$. The results in the second one follow easily from the first one by a simple observation, which is by interchanging the roles of $J_1$ and $J_2$ and by replacing $M_{D_1,D_2}$ by $M_{D_1,D_2}^{-1}$.  	

\subsection{Computing $\tau_{1,2}$}	
Refer to~\eqref{eq:tau}, $\left\|M_{D_1,D_2}^{-1}{\rm{e}}^{J_2 s}\,M_{D_1,D_2} {\rm{e}}^{J_1 t} \right\|<1$ if and only if $f(\lambda_1,t,s)<0$, where 

\begin{eqnarray}\label{mainfunc} f(\lambda_1,t,s)&=&\left(ad\, {\rm{e}}^{-p_1t-p_2s}-bc\, {\rm{e}}^{-p_1t-q_2s}\right)^2+ \frac{b^2d^2}{\lambda_1^4}\left({\rm{e}}^{-p_2s-q_1t}-{\rm{e}}^{-q_1t-q_2s}\right)^2\nonumber \\
	&&+a^2c^2\,\lambda_1^4\left({\rm{e}}^{-p_1t-q_2s}-{\rm{e}}^{-p_1t-p_2s}\right)^2+\left(ad\,{\rm{e}}^{-q_1t-q_2s}-bc\, {\rm{e}}^{-p_2s-q_1t}\right)^2\nonumber\\
	&&-1-{\rm{e}}^{-2(p_1+q_1)t-2(p_2+q_2)s}.
\end{eqnarray}

We will find $\tau_{1,2}$ (depending on $\lambda_1$) such that $f(\lambda_1,t,s)<0$, for all $t,s>\tau_{1,2}$. Also we would like to find an optimal choice of $\lambda_1$ which gives the least bound $\tau_{1,2}$.

\begin{proposition}\label{rr:thm:ad=0,1}
	If one of the entries of $M$ is zero, then the switched system~\eqref{eq:system} is stable for all signals $\sigma$. 
\end{proposition}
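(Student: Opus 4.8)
The plan is to exploit the fact that a vanishing entry of $M$ is precisely the statement that $A_1$ and $A_2$ share a common eigenvector; once that eigenvector is in hand, the switched system becomes triangular in a fixed basis and its trajectories are bounded uniformly over \emph{all} signals, which yields stability straight from the $\epsilon$--$\delta$ definition.

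\emph{Step 1 (produce a common eigenvector).} Since $A_i=P_iJ_iP_i^{-1}$ with $J_i=\operatorname{diag}(-p_i,-q_i)$, the columns $P_ie_1,P_ie_2$ of $P_i$ are eigenvectors of $A_i$ for the eigenvalues $-p_i,-q_i$. First I would note that each vanishing entry of $M=\begin{pmatrix}a&b\\ c&d\end{pmatrix}$ forces $M$ to carry one standard basis vector to a multiple of a standard basis vector: $c=0\Rightarrow Me_1=ae_1$, $b=0\Rightarrow Me_2=de_2$, $a=0\Rightarrow Me_1=ce_2$, and $d=0\Rightarrow Me_2=be_1$, where in the last two cases $\det M=-bc=\pm1$ rules out $b=0$ and $c=0$. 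Reading the relation $Me_k=(\text{scalar})\,e_\ell$ through $M=P_2^{-1}P_1$ gives $P_1e_k=(\text{scalar})\,P_2e_\ell$, i.e.\ an eigenvector of $A_1$ is parallel to an eigenvector of $A_2$. Hence there exist a nonzero $v\in\mathbb{R}^2$ and $\mu_1,\mu_2>0$ with $A_iv=-\mu_iv$ for $i=1,2$.

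\emph{Steps 2--3 (triangularize and bound uniformly).} Pick any $w$ with $\{v,w\}$ a basis of $\mathbb{R}^2$, let $S$ be the invertible matrix with columns $v,w$, and set $\tilde A_i=S^{-1}A_iS$. Because $v$ is the first basis vector and an eigenvector, $\tilde A_i=\begin{pmatrix}-\mu_i&\nu_i\\ 0&-\rho_i\end{pmatrix}$ with $\nu_i\in\mathbb{R}$ and $\rho_i>0$ ($-\rho_i$ is the remaining eigenvalue of $A_i$). In the coordinates $\tilde x=S^{-1}x$, the system~\eqref{eq:system} becomes $\dot{\tilde x}_2=-\rho_{\sigma(t)}\tilde x_2$ and $\dot{\tilde x}_1=-\mu_{\sigma(t)}\tilde x_1+\nu_{\sigma(t)}\tilde x_2$. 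From the first equation $|\tilde x_2(t)|\le|\tilde x_2(0)|$ for all $t\ge0$ (as $\rho_{\sigma(t)}>0$); substituting this into the variation-of-parameters solution of the second equation and using $\int_r^t\mu_{\sigma(u)}\,du\ge\mu_-(t-r)$ with $\mu_-=\min\{\mu_1,\mu_2\}>0$ and $\nu_+=\max\{|\nu_1|,|\nu_2|\}$, I would get $|\tilde x_1(t)|\le|\tilde x_1(0)|+\nu_+|\tilde x_2(0)|\int_0^t\mathrm{e}^{-\mu_-(t-r)}\,dr\le|\tilde x_1(0)|+(\nu_+/\mu_-)\,|\tilde x_2(0)|$. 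Therefore $\|\tilde x(t)\|\le K\|\tilde x(0)\|$ for all $t\ge0$, with $K$ depending only on $\nu_+/\mu_-$ and hence independent of the switching signal; passing back through $S$, $\|x(t)\|\le\|S\|\,\|S^{-1}\|\,K\,\|x(0)\|$ for every $t\ge0$ and every $\sigma$, and choosing $\delta=\epsilon/(\|S\|\,\|S^{-1}\|\,K)$ in the definition of stability would complete the argument.

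\emph{Where the difficulty lies.} There is essentially no computation; the only part needing care is the case bookkeeping in Step 1, in particular using $\det M=\pm1$ to eliminate the degenerate sub-cases. The conceptual point to flag is that one should not try to prove this via the pointwise condition $f(\lambda_1,t,s)<0$ from~\eqref{mainfunc}: whenever the surviving off-diagonal entry of $M$ is nonzero, that inequality fails for small $t,s$, and in fact the trajectories need not tend to $0$ — which is exactly why the statement asserts only stability (not asymptotic stability), and why the invariant-line argument above, rather than a norm estimate for $M_{D_1,D_2}^{-1}\mathrm{e}^{J_2 s}M_{D_1,D_2}\mathrm{e}^{J_1 t}$, is the right tool.
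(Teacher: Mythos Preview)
Your proof is correct, and it is a genuinely different argument from the paper's.  The paper does \emph{not} use the common-eigenvector/triangularization idea here; instead it works entirely inside the $f(\lambda_1,t,s)<0$ framework that you dismissed.  Concretely, with (say) $b=0$ it rewrites $f(\lambda_1,t,s)<0$ as $a^2c^2\lambda_1^4<m(t,s)$ for an explicit function $m$, shows that $\inf_{t,s>0}m(t,s)=4p_2q_2/(q_2-p_2)^2$, and then simply picks $\lambda_1$ small enough so that the inequality holds for \emph{all} $t,s>0$.  Stability then follows from the blanket estimate~\eqref{eq:newform-12}.  What your argument buys is conceptual transparency and a stronger conclusion (see below); what the paper's argument buys is that it stays inside the same machinery used for the rest of Section~\ref{sec:RR}, and it sets up the function $m(0,s)$ that is reused in Lemma~\ref{rr:lemma:k0s}.

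Your final paragraph, however, contains two inaccuracies.  First, the assertion that ``that inequality fails for small $t,s$'' is exactly what the scaling parameter $\lambda_1$ is designed to repair: with the choice above, $f(\lambda_1,t,s)<0$ holds on the whole open quadrant, even arbitrarily close to the origin.  Second, the claim that ``the trajectories need not tend to $0$'' is false in this setting.  Your own triangular representation shows it: $|\tilde x_2(t)|\le|\tilde x_2(0)|\,\mathrm{e}^{-\rho_- t}\to0$, and then the variation-of-parameters integral for $\tilde x_1$ is bounded by $\nu_+|\tilde x_2(0)|\int_0^t \mathrm{e}^{-\mu_-(t-r)}\mathrm{e}^{-\rho_- r}\,dr$, which also tends to $0$.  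So a shared eigenvector in fact gives asymptotic stability for every signal; the Proposition (and the paper's method via~\eqref{eq:newform-12}) merely refrains from claiming it.
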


\begin{proof}
	We assume that $b=0$, the other cases follow similarly. Since $\det(M)=\pm 1$, $a\ne 0$ and $d=\pm 1/a$. Substituting this in~\eqref{mainfunc}, $f(\lambda_1,t,s)<0$ if and only if
	\begin{eqnarray*}
		a^2c^2\lambda_1^4 &<&\frac{\left(1-{\rm{e}}^{-2q_1 t-2q_2 s}\right)\left(1-{\rm{e}}^{-2p_1 t-2p_2 s}\right)}{{\rm{e}}^{-2p_1 t}\left({\rm{e}}^{-q_2 s}-{\rm{e}}^{-p_2 s}\right)^2}=m(t,s).
	\end{eqnarray*}
	
	For all $t,s>0$, the function $m(t,s)$ is decreasing in $t$. Hence $m(t,s)\ge m(0,s)$, for all $t,\,s>0$. Also the function $m(0,s)$ is increasing in $s$. Hence $m(t,s)\ge \lim_{s\to 0}\,m(0,s)=4p_2q_2/(q_2-p_2)^2$. Thus $f(\lambda_1,t,s)<0$, for all $t,s>0$, for any $\lambda_1$ satisfying 
	\begin{eqnarray*}
		\lambda_1<\sqrt[4]{\frac{1}{a^2 c^2}\frac{4p_2q_2}{(q_2-p_2)^2}}.
	\end{eqnarray*}
	Hence the result follows.		
\end{proof}

\noindent In view of the above proposition, we restrict ourselves to the case when none of the entries of $M$ is zero. Thus $ad\notin\{0,1\}$ hence otherwise one of the entries of $M$ is zero. Define the function 
\begin{eqnarray*}
	k(t,s)=f\left(\sqrt[8]{\frac{b^2d^2}{a^2c^2}\, {\rm{e}}^{-2(q_1-p_1)t}},t,s\right).
\end{eqnarray*}
Observe that, for each $\lambda_1$ and for all $t,s\ge 0$,
\[
k(t,s)\le f(\lambda_1,t,s).
\]
It will turn out that studying the zero set $\mathcal{C}$ of $k$ in the first quadrant $Q_1$ of the $(t,s)$-plane, that is $\mathcal{C}=\{(t,s) \ \vert\ k(t,s)=0\}$, is enough to compute $\tau_{1,2}$.

\begin{lemma}\label{ktsform} 
	Let $M$ have all nonzero entries and $\epsilon=\dfrac{abcd}{|abcd|}$, then $k(t,s)$ can be expressed as
	\[
	\left[ad\left({\rm{e}}^{-p_1t-p_2s}-\epsilon{\rm{e}}^{-q_1t-q_2s}\right)-bc\left({\rm{e}}^{-p_1t-q_2s}-\epsilon{\rm{e}}^{-p_2s-q_1t}\right)\right]^2-\left({\rm{e}}^{-(p_1+q_1)t-(p_2+q_2)s}-\epsilon\right)^2.
	\]
\end{lemma}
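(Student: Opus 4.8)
The plan is to compute $k(t,s)$ directly by substituting the specified value of $\lambda_1$ into the expression~\eqref{mainfunc} for $f$, and then reorganize the resulting terms. Set $\lambda_1^4 = \frac{|bd|}{|ac|}\,{\rm e}^{-(q_1-p_1)t}$, so that $\lambda_1^8 = \frac{b^2d^2}{a^2c^2}\,{\rm e}^{-2(q_1-p_1)t}$ as prescribed. The point of this choice is that it balances the second and third squared terms of $f$: the coefficient $b^2d^2/\lambda_1^4$ of the term $\left({\rm e}^{-p_2s-q_1t}-{\rm e}^{-q_1t-q_2s}\right)^2$ becomes $|ac||bd|\,{\rm e}^{(q_1-p_1)t}$, while the coefficient $a^2c^2\lambda_1^4$ of $\left({\rm e}^{-p_1t-q_2s}-{\rm e}^{-p_1t-p_2s}\right)^2$ becomes $|ac||bd|\,{\rm e}^{-(q_1-p_1)t}$. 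First I would carry out these substitutions explicitly and pull the exponential factors inside the squares: $|ac||bd|\,{\rm e}^{(q_1-p_1)t}\left({\rm e}^{-p_2s-q_1t}-{\rm e}^{-q_1t-q_2s}\right)^2 = |ac||bd|\left({\rm e}^{-p_2s-p_1t}-{\rm e}^{-p_1t-q_2s}\right)^2$ after moving ${\rm e}^{(q_1-p_1)t/2}$ into each square, and similarly $|ac||bd|\,{\rm e}^{-(q_1-p_1)t}\left({\rm e}^{-p_1t-q_2s}-{\rm e}^{-p_1t-p_2s}\right)^2 = |ac||bd|\left({\rm e}^{-q_1t-q_2s}-{\rm e}^{-q_1t-p_2s}\right)^2$. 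Note both these rewritten terms equal $|abcd|\left({\rm e}^{-p_1t-p_2s}-{\rm e}^{-p_1t-q_2s}\right)^2$ and $|abcd|\left({\rm e}^{-q_1t-q_2s}-{\rm e}^{-q_1t-p_2s}\right)^2$ respectively — the first depends on $t$ only through ${\rm e}^{-p_1 t}$ and the second only through ${\rm e}^{-q_1 t}$.

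Next I would recognize that the four squared terms of $f$, together with these two cross terms, assemble into a single perfect square. Writing $\epsilon = abcd/|abcd|$, the key algebraic identity to verify is that
\[
\left(ad\,{\rm e}^{-p_1t-p_2s}-bc\,{\rm e}^{-p_1t-q_2s}\right)^2 + \left(ad\,{\rm e}^{-q_1t-q_2s}-bc\,{\rm e}^{-p_2s-q_1t}\right)^2 + |abcd|\,X^2 + |abcd|\,Y^2
\]
where $X = {\rm e}^{-p_1t-p_2s}-{\rm e}^{-p_1t-q_2s}$ and $Y = {\rm e}^{-q_1t-q_2s}-{\rm e}^{-q_1t-p_2s}$, equals $\left[ad\left({\rm e}^{-p_1t-p_2s}-\epsilon{\rm e}^{-q_1t-q_2s}\right)-bc\left({\rm e}^{-p_1t-q_2s}-\epsilon{\rm e}^{-p_2s-q_1t}\right)\right]^2$. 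Expanding the claimed square produces the two "diagonal" squares above exactly, plus cross terms $\mp 2(ad)(bc)\epsilon\,{\rm e}^{-(p_1+q_1)t-(p_2+q_2)s}$ and, importantly, $-2\epsilon\,(ad)^2{\rm e}^{-(p_1+q_1)t-(p_2+q_2)s}$ wait — I must be careful: the square of $\left(ad\,{\rm e}^{-p_1t-p_2s}-ad\,\epsilon\,{\rm e}^{-q_1t-q_2s}\right)$ alone contributes $a^2d^2{\rm e}^{-2p_1t-2p_2s} + a^2d^2{\rm e}^{-2q_1t-2q_2s} - 2\epsilon a^2d^2{\rm e}^{-(p_1+q_1)t-(p_2+q_2)s}$, and matching this against the diagonal squares of $f$ forces the $|abcd|X^2$, $|abcd|Y^2$ terms to absorb the extra pieces; since $\epsilon^2=1$ and $|abcd| = \epsilon\, abcd$, the bookkeeping of signs works out. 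So the bulk of the proof is a disciplined expansion-and-matching of both sides, which I would organize by collecting coefficients of each distinct exponential monomial ${\rm e}^{-(\alpha_1 t + \alpha_2 s)}$ with $\alpha_i\in\{2p_i, 2q_i, p_i+q_i\}$.

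The analogous treatment of the $-1 - {\rm e}^{-2(p_1+q_1)t - 2(p_2+q_2)s}$ tail of $f$ is immediate: it equals $-\left({\rm e}^{-(p_1+q_1)t-(p_2+q_2)s} - \epsilon\right)^2 + 2\epsilon\,{\rm e}^{-(p_1+q_1)t-(p_2+q_2)s} - \epsilon^2 + \epsilon^2 - 1$, and since $\epsilon^2 = 1$ this is $-\left({\rm e}^{-(p_1+q_1)t-(p_2+q_2)s}-\epsilon\right)^2 + 2\epsilon\,{\rm e}^{-(p_1+q_1)t-(p_2+q_2)s}$; the leftover $2\epsilon\,{\rm e}^{-(p_1+q_1)t-(p_2+q_2)s}$ term is exactly what is needed to complete the square on the first part (it cancels against the residual cross term $-2\epsilon(a^2d^2 + b^2c^2 - \text{something})\cdots$; here $\det M = ad-bc = \pm 1$ enters, giving $a^2d^2 - 2abcd + b^2c^2 = 1$, equivalently $a^2d^2 + b^2c^2 = 1 + 2abcd = 1 + 2\epsilon|abcd|$). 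I expect the main obstacle to be precisely this sign- and determinant-bookkeeping: keeping straight the factor $\epsilon$, using $\det M = \pm 1$ in the form $a^2d^2 + b^2c^2 - 2\epsilon|abcd| = 1$, and confirming that every cross term produced by squaring the bracketed expression is matched — in particular the mixed term $2\epsilon\,ad\,bc\,{\rm e}^{-(p_1+q_1)t-(p_2+q_2)s}$ coming from the product of the two $\epsilon$-terms inside the bracket must combine correctly with the $-2ad\,bc$ cross terms already present in the diagonal squares of $f$. Once the coefficient of $\,{\rm e}^{-(p_1+q_1)t-(p_2+q_2)s}\,$ is shown to balance using $\det M = \pm 1$, the identity closes and the lemma follows.
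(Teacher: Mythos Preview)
Your approach---substitute the optimizing $\lambda_1$, expand, and match coefficients using $(ad-bc)^2=1$---is correct and is exactly what the paper does (its proof reads in full: ``Follows by a simple rearrangement of terms''). One arithmetic slip to correct before you execute the matching: after the substitution, the second and third squared terms of~\eqref{mainfunc} \emph{each} become $|abcd|\,e^{-(p_1+q_1)t}(e^{-p_2s}-e^{-q_2s})^2$, not the $e^{-2p_1t}$ and $e^{-2q_1t}$ versions you wrote (moving $e^{(q_1-p_1)t/2}$ inside the square turns $e^{-q_1t}$ into $e^{-(p_1+q_1)t/2}$, not $e^{-p_1t}$). Writing $A=e^{-p_1t-p_2s}$, $B=e^{-p_1t-q_2s}$, $C=e^{-q_1t-p_2s}$, $D=e^{-q_1t-q_2s}$ (so $AD=BC$), their sum is $2|abcd|(A-B)(C-D)=2|abcd|(AC+BD-2AD)$, and the residual coefficient of $AD$ then balances via $a^2d^2+b^2c^2-2abcd=1$ exactly as you anticipated.
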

\begin{proof}
	Follows by a simple rearrangement of terms. Observe that $\epsilon=-1$ if and only if $0<ad<1$. 
\end{proof}

\begin{lemma}\label{rr:lemma:k0s}
	Let $\mathcal{I}=\left[-\frac{p_2}{q_2-p_2},\frac{q_2}{q_2-p_2}\right]$. If $ad\in\mathcal{I}\setminus\{0,1\}$, then $k(0,s)<0$, for all $s>0$. If $ad\notin\mathcal{I}$, then there is a unique $s_0>0$ such that $k(0,s_0)=0$.
\end{lemma}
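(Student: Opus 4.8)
The plan is to set $t=0$ in the closed form for $k$ provided by Lemma~\ref{ktsform} and study the resulting function of the single variable $s>0$. Writing $u={\rm e}^{-p_2 s}$ and $v={\rm e}^{-q_2 s}$, so that $0<v<u<1$ for every $s>0$ because $0<p_2<q_2$, and factoring the difference of squares in Lemma~\ref{ktsform} at $t=0$, one gets
\[
k(0,s)=\bigl[\,ad(u-\epsilon v)-bc(v-\epsilon u)-(uv-\epsilon)\,\bigr]\bigl[\,ad(u-\epsilon v)-bc(v-\epsilon u)+(uv-\epsilon)\,\bigr].
\]
It is then natural to split into the cases $\epsilon=-1$ and $\epsilon=1$, recalling from Lemma~\ref{ktsform} that $\epsilon=-1$ holds precisely when $0<ad<1$ (which already forces $ad\in\mathcal I\setminus\{0,1\}$) and $\epsilon=1$ otherwise, and to use the relation $bc=ad-\det M$ with $\det M=\pm1$.

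In the case $\epsilon=-1$, collecting terms turns the product above into $(\det M)^2(u+v)^2-(uv+1)^2=(u+v)^2-(uv+1)^2=-(1-u^2)(1-v^2)$, which is strictly negative for all $s>0$; this settles the first assertion whenever $0<ad<1$. In the case $\epsilon=1$ the analogous rearrangement (using $ad+bc=2ad-\det M$) gives $k(0,s)=(ad+bc)^2(u-v)^2-(1-uv)^2$, and since $u-v>0$ and $1-uv>0$ the sign of $k(0,s)$ is governed by the comparison of $|ad+bc|$ with $g(s):=\dfrac{1-uv}{u-v}$. Setting $\alpha=\tfrac{p_2+q_2}{2}$ and $\beta=\tfrac{q_2-p_2}{2}$ (so $\alpha>\beta>0$) and using $1-uv=2{\rm e}^{-\alpha s}\sinh(\alpha s)$ and $u-v=2{\rm e}^{-\alpha s}\sinh(\beta s)$, one obtains the compact form $g(s)=\sinh(\alpha s)/\sinh(\beta s)$.

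The crux of the argument is that $g$ is a strictly increasing bijection of $(0,\infty)$ onto $\bigl(\tfrac{p_2+q_2}{q_2-p_2},\infty\bigr)$: the endpoint behaviour $g(0^{+})=\alpha/\beta=\tfrac{p_2+q_2}{q_2-p_2}$ and $g(+\infty)=+\infty$ is elementary, and strict monotonicity follows from $\tfrac{d}{ds}\log g(s)=\alpha\coth(\alpha s)-\beta\coth(\beta s)>0$, which holds because $x\mapsto x\coth(xs)$ is increasing on $(0,\infty)$ for fixed $s>0$ (equivalently, $\sinh(2y)>2y$ for $y>0$). Granting this, $k(0,s)<0$ for all $s>0$ exactly when $|ad+bc|\le\tfrac{p_2+q_2}{q_2-p_2}$, while if $|ad+bc|>\tfrac{p_2+q_2}{q_2-p_2}$ there is a unique $s_0>0$ with $g(s_0)=|ad+bc|$, hence a unique zero of $k(0,\cdot)$ in $(0,\infty)$. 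It then remains to rewrite $|2ad-\det M|\le\tfrac{p_2+q_2}{q_2-p_2}$ in terms of $ad$: solving this inequality produces precisely the interval $\mathcal I$, with the endpoints $-\tfrac{p_2}{q_2-p_2}$ and $\tfrac{q_2}{q_2-p_2}$ being the values of $ad$ at which it becomes an equality. Combining the $\epsilon=1$ conclusion with the $\epsilon=-1$ case (where $0<ad<1\subset\mathcal I$ always yields $k(0,s)<0$), and noting that $ad\notin\mathcal I$ automatically forces $\epsilon=1$, gives both statements of the lemma. The only non-routine ingredient is the monotonicity of $g$; everything else is algebraic rearrangement together with the endpoint asymptotics of $\sinh(\alpha s)/\sinh(\beta s)$.
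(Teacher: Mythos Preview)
Your argument is correct and is essentially the paper's proof in a different dress. The paper writes
\[
k(0,s)=\bigl({\rm e}^{-p_2 s}-{\rm e}^{-q_2 s}\bigr)^2\bigl(4ad(ad-1)-m(0,s)\bigr),
\]
and uses the monotonicity of $m(0,s)$ together with its limiting value $4p_2q_2/(q_2-p_2)^2$ at $s=0$. Your function $g(s)=\sinh(\alpha s)/\sinh(\beta s)$ is tied to the paper's by $g(s)^2=1+m(0,s)$, so the two monotonicity arguments are equivalent; and with $\det M=1$ one has $(ad+bc)^2-1=(2ad-1)^2-1=4ad(ad-1)$, so your threshold inequality $|ad+bc|\le(p_2+q_2)/(q_2-p_2)$ is exactly the paper's $4ad(ad-1)\le 4p_2q_2/(q_2-p_2)^2$. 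The only organisational difference is that you separate the cases $\epsilon=\pm1$ via Lemma~\ref{ktsform}, whereas the paper's factorisation absorbs both at once (when $0<ad<1$ the quantity $4ad(ad-1)$ is negative, hence automatically below $m(0,s)$).

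One genuine slip: your final step asserts that $|2ad-\det M|\le(p_2+q_2)/(q_2-p_2)$ ``produces precisely the interval $\mathcal I$'' for $\det M=\pm1$. This is only true when $\det M=1$; for $\det M=-1$ the inequality instead yields $ad\in[-q_2/(q_2-p_2),\,p_2/(q_2-p_2)]$, which is not $\mathcal I$. (Relatedly, the characterisation $\epsilon=-1\Leftrightarrow 0<ad<1$ that you quote from Lemma~\ref{ktsform} also presupposes $\det M=1$.) Since the paper works throughout this section with $bc=ad-1$, i.e.\ $\det M=1$, this does not affect the proof of the lemma as stated; but you should drop the claim that the case $\det M=-1$ is covered.
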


\begin{proof}
	Recall the function $m(0,s)$ defined in Proposition~\ref{rr:thm:ad=0,1} and observe that 
	\begin{align*}
		k(0,s)&{}=\left({\rm{e}}^{-p_2 s}-{\rm{e}}^{-q_2 s}\right)^2\left(4ad(ad-1)-m(0,s)\large\right).
	\end{align*}
	The function $m(0,s)$ is increasing in $s$ and $\lim_{s\to 0}\,m(0,s)=4p_2q_2/(q_2-p_2)^2$. The equation $4ad(ad-1)=4p_2q_2/(q_2-p_2)^2$, in $ad$, has two solutions given by $ad=q_2/(q_2-p_2)$ and $ad=-p_2/(q_2-p_2)$. Therefore when $ad\in\mathcal{I}\setminus\{0,1\}$, $k(0,s)<0$, for all $s>0$. Further when $ad\notin\mathcal{I}$, $\lim_{s\to 0}\left(4ad(ad-1)-m(0,s)\right)>0$. Since the function $m(0,s)$ is increasing, there exists a unique $s_0>0$ such that $k(0,s_0)=0$. Also, $k(0,s)>0$, for all $s\in(0,s_0)$ and $k(0,s)<0$, for all $s>s_0$.
\end{proof}

\begin{proposition}\label{rr:thm:adinI}
	If $ad\in\mathcal{I}\setminus\{0,1\}$, then the switched system~\eqref{eq:system} is stable for all signals $\sigma$.
\end{proposition}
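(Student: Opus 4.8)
The plan is to exhibit a single scaling of the eigenvectors for which the one-step contraction in the first line of~\eqref{eq:tau} is non-strict on the whole open quadrant $Q_1$; then $\tau_{1,2}$ may be taken equal to $0$, and stability for every switching signal follows from~\eqref{eq:newform-12} together with Remark~\ref{rmk:stab}(1). Concretely, I would fix
\[
\lambda_1=\sqrt[8]{\dfrac{b^2d^2}{a^2c^2}},
\]
the value of $\lambda_1$ at which $f(\lambda_1,\cdot,\cdot)$ coincides with its lower envelope $k$ along the line $t=0$, and prove that $f(\lambda_1,t,s)\le 0$ for all $t,s\ge 0$.

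First I would record a closed form. Substituting $\lambda_1^4=|bd|/|ac|$ into~\eqref{mainfunc}, the two $\lambda_1$-dependent terms combine by the equality case of the arithmetic--geometric mean inequality, and after the same rearrangement used in Lemma~\ref{ktsform} one obtains
\[
f(\lambda_1,t,s)=\mathrm{e}^{-2p_1t}\,\Phi(s)+\mathrm{e}^{-2q_1t}\,\Psi(s)-\mathrm{e}^{-2(p_1+q_1)t-2(p_2+q_2)s}-1,
\]
where
\[
\Phi(s)=\bigl(ad\,\mathrm{e}^{-p_2s}-bc\,\mathrm{e}^{-q_2s}\bigr)^2+|abcd|\bigl(\mathrm{e}^{-p_2s}-\mathrm{e}^{-q_2s}\bigr)^2\ \ge 0,
\]
\[
\Psi(s)=\bigl(ad\,\mathrm{e}^{-q_2s}-bc\,\mathrm{e}^{-p_2s}\bigr)^2+|abcd|\bigl(\mathrm{e}^{-p_2s}-\mathrm{e}^{-q_2s}\bigr)^2\ \ge 0.
\]
In particular $f(\lambda_1,0,s)=\Phi(s)+\Psi(s)-\mathrm{e}^{-2(p_2+q_2)s}-1=k(0,s)$, which is $<0$ for every $s>0$ by Lemma~\ref{rr:lemma:k0s} --- this is the only place the hypothesis $ad\in\mathcal I\setminus\{0,1\}$ is used. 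Also $f(\lambda_1,t,0)=\mathrm{e}^{-2p_1t}+\mathrm{e}^{-2q_1t}-\mathrm{e}^{-2(p_1+q_1)t}-1=-(1-\mathrm{e}^{-2p_1t})(1-\mathrm{e}^{-2q_1t})\le 0$.

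Next I would show that $t\mapsto f(\lambda_1,t,s)$ is non-increasing on $[0,\infty)$ for each fixed $s\ge 0$; together with the previous step this yields $f(\lambda_1,t,s)\le f(\lambda_1,0,s)\le 0$ on all of $\overline{Q_1}$. From the closed form, $\partial f/\partial t\le 0$ is equivalent to
\[
p_1\mathrm{e}^{-2p_1t}\Phi(s)+q_1\mathrm{e}^{-2q_1t}\Psi(s)\ \ge\ (p_1+q_1)\,\mathrm{e}^{-2(p_1+q_1)t}\,\mathrm{e}^{-2(p_2+q_2)s},
\]
and since $\mathrm{e}^{-2q_1t}\le\mathrm{e}^{-2p_1t}\le 1$ it is enough to prove $\Phi(s)\ge \mathrm{e}^{-2(p_2+q_2)s}$ and $\Psi(s)\ge \mathrm{e}^{-2(p_2+q_2)s}$ for all $s\ge 0$. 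When $0<ad<1$ (so that $\epsilon=-1$ in Lemma~\ref{ktsform}) a direct computation shows that $\Phi(s)$ and $\Psi(s)$ both lie between $\mathrm{e}^{-2q_2s}$ and $\mathrm{e}^{-2p_2s}$, hence are $\ge\mathrm{e}^{-2q_2s}\ge\mathrm{e}^{-2(p_2+q_2)s}$. For $ad\le 0$ or $ad\ge 1$, the substitution $ad\mapsto 1-ad$ interchanges $\Phi$ and $\Psi$ and maps $\mathcal I$ onto itself (it is symmetric about $\tfrac12$), together with the elementary bound $\Phi(s)\ge (ad\,\mathrm{e}^{-p_2s}-bc\,\mathrm{e}^{-q_2s})^2\ge \mathrm{e}^{-2p_2s}$ valid once $ad\ge 1$; this reduces everything to proving $\Psi(s)\ge \mathrm{e}^{-2(p_2+q_2)s}$ for $1\le ad\le q_2/(q_2-p_2)$. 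Setting $w=\mathrm{e}^{-p_2s}-\mathrm{e}^{-q_2s}$, the latter becomes a quadratic inequality in $w$ with positive leading coefficient; comparing its two positive roots with the actual value $w=\mathrm{e}^{-p_2s}\bigl(1-\mathrm{e}^{-(q_2-p_2)s}\bigr)$ shows that it holds exactly when $4\,ad(ad-1)\le 4p_2q_2/(q_2-p_2)^2$, i.e.\ exactly when $ad\in\mathcal I$.

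Once $f(\lambda_1,t,s)\le 0$ throughout $\overline{Q_1}$, we have $\bigl\|M_{D_1,D_2}^{-1}\mathrm{e}^{J_2 s}M_{D_1,D_2}\mathrm{e}^{J_1 t}\bigr\|\le 1$ for all $t,s\ge 0$, so every factor of the product in~\eqref{eq:newform-12} is at most $1$; hence $\|x(t)\|\le\eta_{1,2}^{(D_1,D_2)}\xi_{1,2}\|x(0)\|$ for all $t$ and all $\sigma$, which is the asserted stability. I expect the main obstacle to be the last step of the third paragraph --- the bound $\Psi(s)\ge \mathrm{e}^{-2(p_2+q_2)s}$ when $ad$ sits at or near the right endpoint $q_2/(q_2-p_2)$ of $\mathcal I$ --- since there the two sides agree in the limit $s\to 0$, so one must use the defining inequality of $\mathcal I$ sharply rather than with room to spare.
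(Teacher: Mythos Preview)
Your overall strategy—choose $\lambda_1=\sqrt[8]{b^2d^2/(a^2c^2)}$ and show $f(\lambda_1,t,s)\le 0$ on all of $\overline{Q_1}$—is exactly what the paper does, but the paper's execution is far shorter. It observes that for this $\lambda_1$ one has $f(\lambda_1,0,s)=k(0,s)<0$ for all $s>0$ (your second paragraph, via Lemma~\ref{rr:lemma:k0s}), so $\bigl\|M_{D_1,D_2}^{-1}\mathrm{e}^{J_2 s}M_{D_1,D_2}\bigr\|<1$, and then uses the one-line operator-norm inequality
\[
\bigl\|M_{D_1,D_2}^{-1}\mathrm{e}^{J_2 s}M_{D_1,D_2}\mathrm{e}^{J_1 t}\bigr\|
\;\le\;
\bigl\|M_{D_1,D_2}^{-1}\mathrm{e}^{J_2 s}M_{D_1,D_2}\bigr\|\cdot\bigl\|\mathrm{e}^{J_1 t}\bigr\|
\;\le\;
\bigl\|M_{D_1,D_2}^{-1}\mathrm{e}^{J_2 s}M_{D_1,D_2}\bigr\|,
\]
valid because $J_1$ is diagonal with negative entries, so $\|\mathrm{e}^{J_1 t}\|=\mathrm{e}^{-p_1t}\le 1$. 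That immediately gives $f(\lambda_1,t,s)<0$ for all $t,s>0$, with no further work.

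Your third paragraph replaces this by a direct computation that $\partial f/\partial t\le 0$, reduced to the pointwise bounds $\Phi(s)\ge\mathrm{e}^{-2(p_2+q_2)s}$ and $\Psi(s)\ge\mathrm{e}^{-2(p_2+q_2)s}$. The $\Phi$-bound and the case $0<ad<1$ are fine (indeed $\Phi=\alpha u^2+(1-\alpha)v^2$ and $\Psi=\alpha v^2+(1-\alpha)u^2$ for $\alpha=ad\in(0,1)$, $u=\mathrm{e}^{-p_2s}$, $v=\mathrm{e}^{-q_2s}$). But your treatment of $\Psi(s)\ge\mathrm{e}^{-2(p_2+q_2)s}$ for $1\le ad\le q_2/(q_2-p_2)$ is not a proof as written: setting $w=u-v$ gives
\[
\Psi(s)-u^2v^2
=u^2(1-u^2)+2uw(u^2-\alpha)+w^2\bigl[\alpha(2\alpha-1)-u^2\bigr],
\]
which is quadratic in $w$ \emph{only if $u$ is held fixed}. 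Since $u=\mathrm{e}^{-p_2s}$ and $w=\mathrm{e}^{-p_2s}-\mathrm{e}^{-q_2s}$ are \emph{both} functions of $s$, ``comparing the two positive roots with the actual value of $w$'' does not yield a clean reduction to $4\,ad(ad-1)\le 4p_2q_2/(q_2-p_2)^2$; the coefficients and the ``actual value'' move together. (Numerically the inequality does hold—e.g.\ for $p_2=1$, $q_2=2$, $ad=2$ one gets $\Psi-u^2v^2=u^2(1-u)^3(u+3)\ge 0$—but your argument does not establish it.) This is precisely the obstacle you flagged, and it is real.

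So: same $\lambda_1$, same use of Lemma~\ref{rr:lemma:k0s}, but you should replace the computational monotonicity argument by the submultiplicativity of the operator norm together with $\|\mathrm{e}^{J_1t}\|\le 1$. That removes the delicate $\Psi$-bound entirely.
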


\begin{proof}
	Let $\lambda_1=\sqrt[8]{\frac{b^2d^2}{a^2c^2}}$. It follows from the definition of $k$ that $
	f(\lambda_1,0,s)=k(0,s)$. Hence by Lemma~\ref{rr:lemma:k0s}, $f(\lambda_1,0,s)<0$, for all $s>0$. Therefore $\left\Vert M_{D_1,D_2}^{-1} {\rm{e}}^{J_2 s} . \, M_{D_1,D_2}\right\Vert<1$, for all $s>0$. Since $\left\Vert M_{D_1,D_2}^{-1} {\rm{e}}^{J_2 s} . \, M_{D_1,D_2} {\rm{e}}^{J_1t}\right\Vert\le \left\Vert M_{D_1,D_2}^{-1} {\rm{e}}^{J_2 s} . \, M_{D_1,D_2}\right\Vert$, for all $t,s>0$, we obtain $f(\lambda_1,t,s)<0$, for all $t,s>0$. Hence the result follows.
\end{proof}

Since $(0,1)\subset\mathcal{I}$, the case when $\text{sgn}(abcd)<0$ is included in the hypothesis of Proposition~\ref{rr:thm:adinI}. Thus assume that $ad\notin\mathcal{I}$, in which case $\text{sgn}(abcd)>0$. Using Lemma~\ref{ktsform} and the relation $bc=ad-1$, the zero set $\mathcal{C}$ of $k$ is given by $\mathcal{C}=\{(t,s)\colon\, ad=\ell_\pm (t,s)\}$, where the functions $\ell_\pm$ are defined for $s\ne 0$ as
\[
	\ell_\pm(t,s)=\frac{\left(1\pm{\rm e}^{-p_1 t-q_2 s}\right)\left({\rm e}^{-p_2 s-q_1 t}\mp 1\right)}{\left({\rm e}^{-p_1 t}+{\rm e}^{-q_1 t}\right)\left({\rm e}^{-p_2 s}-{\rm e}^{-q_2 s}\right)}.
\]

\noindent Note that $\ell_+(t,s)<0$ and $\ell_-(t,s)>0$, for all $s,t> 0$.

\begin{lemma}\label{rr:lemma:ad:notinI}
	(With $s_0>0$ as obtained in Lemma~\ref{rr:lemma:k0s}) Given $ad\notin\mathcal{I}$, for each $s_1\in[0,s_0]$, there exists a unique $t_1\ge 0$ (depending on $s_1$) such that $k(t_1,s_1)=0$. For $s_1>s_0$, $k(t,s_1)<0$, for all $t\ge 0$.
\end{lemma}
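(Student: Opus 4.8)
The plan is to reduce everything to the two branch functions $\ell_{\pm}$ and to a single monotonicity fact about them. Since $ad\notin\mathcal I$, necessarily $ad>q_2/(q_2-p_2)$ or $ad<-p_2/(q_2-p_2)$, and in both cases $abcd>0$, so $\epsilon=1$ in Lemma~\ref{ktsform}. Expanding the difference of squares in Lemma~\ref{ktsform} and substituting $bc=ad-1$ gives, for all $t\ge 0$ and all $s>0$, the factorization
\[
k(t,s)=\alpha(t,s)^2\bigl(ad-\ell_+(t,s)\bigr)\bigl(ad-\ell_-(t,s)\bigr),\qquad \alpha(t,s)=\bigl({\rm e}^{-p_1t}+{\rm e}^{-q_1t}\bigr)\bigl({\rm e}^{-p_2s}-{\rm e}^{-q_2s}\bigr)>0 .
\]
Because $\ell_+(t,s)<0<\ell_-(t,s)$ on $\{t\ge 0,\ s>0\}$, the sign of $k(t,s)$ there is the sign of $ad-\ell_-(t,s)$ when $ad>q_2/(q_2-p_2)$, and the sign of $\ell_+(t,s)-ad$ when $ad<-p_2/(q_2-p_2)$; in particular $k(0,s)=0$ is equivalent to $ad$ meeting the relevant branch at $t=0$.

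The key step is: for each fixed $s>0$ the map $t\mapsto\ell_-(t,s)$ is strictly increasing on $[0,\infty)$, with $\ell_-(0,s)$ finite and $\ell_-(t,s)\to\infty$ as $t\to\infty$; symmetrically $t\mapsto\ell_+(t,s)$ is strictly decreasing from the finite value $\ell_+(0,s)$ down to $-\infty$. I would prove this by differentiating $\ell_-(t,s)=\dfrac{(1-{\rm e}^{-p_1t-q_2s})(1+{\rm e}^{-q_1t-p_2s})}{({\rm e}^{-p_1t}+{\rm e}^{-q_1t})({\rm e}^{-p_2s}-{\rm e}^{-q_2s})}$ in $t$: since the factor ${\rm e}^{-p_2s}-{\rm e}^{-q_2s}$ is positive and $t$-independent, the sign of $\partial_t\ell_-$ is that of
\[
p_1{\rm e}^{-p_1t}\bigl(1+{\rm e}^{-q_1t-p_2s}\bigr)\bigl(1+{\rm e}^{-q_1t-q_2s}\bigr)+q_1{\rm e}^{-q_1t}\bigl(1-{\rm e}^{-p_1t-q_2s}\bigr)\bigl(1-{\rm e}^{-p_1t-p_2s}\bigr),
\]
which is a sum of two strictly positive terms (every exponential lies in $(0,1]$, and those carrying an $s$ are strictly below $1$). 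The limits are clear, as the denominator equals $2({\rm e}^{-p_2s}-{\rm e}^{-q_2s})$ at $t=0$ and tends to $0$ as $t\to\infty$ while the numerator stays bounded away from $0$; $\ell_+$ is treated by the identical computation. I expect this simplification of $\partial_t\ell_-$ to be the only genuinely computational part of the argument, hence the main obstacle.

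Given the monotonicity, the conclusion follows by tracking signs. Suppose first $ad>q_2/(q_2-p_2)$. Evaluating the factorization at $t=0$ and comparing with Lemma~\ref{rr:lemma:k0s} shows $ad>\ell_-(0,s)$ for $s\in(0,s_0)$, $ad=\ell_-(0,s_0)$, and $ad<\ell_-(0,s)$ for $s>s_0$. For $s_1\in(0,s_0)$, strict monotonicity of $\ell_-(\cdot,s_1)$ from $\ell_-(0,s_1)<ad$ up to $+\infty$ gives a unique $t_1>0$ with $\ell_-(t_1,s_1)=ad$; by the sign rule, $k(\cdot,s_1)>0$ on $[0,t_1)$ and $k(\cdot,s_1)<0$ on $(t_1,\infty)$, so $t_1$ is the unique zero. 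For $s_1=s_0$ one has $\ell_-(t,s_0)>\ell_-(0,s_0)=ad$ for every $t>0$, hence $k(0,s_0)=0$ is the unique zero and $k(\cdot,s_0)<0$ on $(0,\infty)$. For $s_1>s_0$, $\ell_-(t,s_1)\ge\ell_-(0,s_1)>ad$ for all $t\ge 0$, so $k(t,s_1)<0$ throughout $[0,\infty)$. The case $ad<-p_2/(q_2-p_2)$ is identical, with the decreasing branch $\ell_+$ in place of $\ell_-$. Finally the degenerate value $s_1=0\in[0,s_0]$ is handled directly: setting $s=0$ in Lemma~\ref{ktsform} and using $ad-bc=\det M=\pm1$ gives $k(t,0)=({\rm e}^{-p_1t}-{\rm e}^{-q_1t})^2-(1-{\rm e}^{-(p_1+q_1)t})^2=-(1-{\rm e}^{-2p_1t})(1-{\rm e}^{-2q_1t})\le 0$, with equality only at $t=0$, so $t_1=0$ is the unique zero. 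This exhausts all $s_1\in[0,s_0]$ and all $s_1>s_0$.
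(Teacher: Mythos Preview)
Your proof is correct and follows essentially the same route as the paper's: both reduce the zero set of $k(\cdot,s_1)$ to the level sets $ad=\ell_\pm(t,s_1)$, establish strict monotonicity of $\ell_\pm$ in $t$ by the same derivative computation, and then read off existence/uniqueness of $t_1$ from the sign of $k(0,s_1)$ supplied by Lemma~\ref{rr:lemma:k0s}. Your explicit factorization $k=\alpha^2(ad-\ell_+)(ad-\ell_-)$ and your separate treatment of the boundary values $s_1=0$ and $s_1=s_0$ make the sign analysis a bit more transparent than in the paper, but the argument is the same in substance.
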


\begin{proof}
	See Appendix~\ref{ap:proof} for a proof. 
\end{proof}

\begin{lemma}\label{rr:zero:IFT}
	(With $s_0>0$ as obtained in Lemma~\ref{rr:lemma:k0s}) Given $ad\notin \mathcal{I}$, there exists a continuous function $\mathcal{O}\colon[0,s_0]\to \mathbb{R}$ such that $\mathcal{C}=\{\left(\mathcal{O}(s),s\right)\,\mid\,s\in[0,s_0]\}$. Moreover $\mathcal{O}$ is continuously differentiable on $(0,s_0)$.
\end{lemma}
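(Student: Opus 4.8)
The plan is to read $\mathcal{O}$ off Lemma~\ref{rr:lemma:ad:notinI}, prove continuity on $[0,s_0]$ by a compactness argument, and upgrade to $C^1$ on $(0,s_0)$ by the implicit function theorem. By Lemma~\ref{rr:lemma:ad:notinI}, for each $s\in[0,s_0]$ there is a unique $t\ge 0$ with $k(t,s)=0$, and $k(\cdot,s)<0$ on $[0,\infty)$ when $s>s_0$; so I set $\mathcal{O}(s)$ equal to that unique zero, and then $\mathcal{C}=\{(\mathcal{O}(s),s)\mid s\in[0,s_0]\}$ holds by construction. I would also record that $\mathcal{O}(s)>0$ for $s\in(0,s_0)$: by Lemma~\ref{rr:lemma:k0s}, $k(0,s)>0$ there, so the zero of $k(\cdot,s)$ cannot be at $t=0$ -- this will matter because the differentiability argument works only at interior points of $Q_1$.

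For continuity, the key preliminary step is that $\mathcal{O}$ is bounded. Write $k(t,s)=A(t,s)^2-B(t,s)^2$ with $A,B$ as in Lemma~\ref{ktsform} (with $\epsilon=1$, since $ad\notin\mathcal{I}$). Each exponential occurring in $A$ is bounded by ${\rm e}^{-p_1 t}$ for $t,s\ge 0$, so $A(t,s)$ -- a fixed finite combination of these -- tends to $0$ as $t\to\infty$ uniformly in $s\ge 0$, while $|B(t,s)|\ge 1-{\rm e}^{-(p_1+q_1)t}\to 1$ uniformly in $s\ge 0$; hence there is $T>0$ with $k(t,s)<0$ for all $t\ge T$ and all $s\ge 0$, and in particular $0\le\mathcal{O}(s)\le T$ on $[0,s_0]$. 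Continuity of $\mathcal{O}$ at any $s^{\ast}\in[0,s_0]$ then follows by a subsequence argument: given $s_n\to s^{\ast}$, every subsequence of $(\mathcal{O}(s_n))\subset[0,T]$ has a further subsequence converging to some $L$, and continuity of $k$ forces $k(L,s^{\ast})=0$, so $L=\mathcal{O}(s^{\ast})$ by uniqueness; hence $\mathcal{O}(s_n)\to\mathcal{O}(s^{\ast})$.

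For $\mathcal{O}\in C^1((0,s_0))$ I would apply the implicit function theorem at each $(\mathcal{O}(s),s)$ with $s\in(0,s_0)$: $k$ is real-analytic, so it is enough to show $k_t(\mathcal{O}(s),s)\ne 0$, and the $C^1$ local solution of $k=0$ produced by the theorem must then coincide with $\mathcal{O}$ by uniqueness of the zero. To verify $k_t\ne 0$, factor $k=(A-B)(A+B)$. On $Q_1$ we have $B={\rm e}^{-(p_1+q_1)t-(p_2+q_2)s}-1<0$; and since $\ell_+<0<\ell_-$ on $Q_1$ while $ad\notin\mathcal{I}$ forces $ad>1$ or $ad<0$, only one of the two factors of $k$ vanishes on $\mathcal{C}\cap Q_1$ -- the one cutting out the active branch $\{ad=\ell_-\}$ (resp.\ $\{ad=\ell_+\}$) -- and the complementary factor equals $\pm 2B\ne 0$ there. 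After dividing the vanishing factor by the nowhere-zero (on $Q_1$) denominator $\left({\rm e}^{-p_1 t}+{\rm e}^{-q_1 t}\right)\left({\rm e}^{-p_2 s}-{\rm e}^{-q_2 s}\right)$, the vanishing of $k_t$ at $(\mathcal{O}(s),s)$ becomes equivalent to the vanishing of $(\partial/\partial t)\,\ell_{\pm}$ there.

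Thus the whole argument reduces to showing that the curve $\mathcal{C}$ has no vertical tangent inside $Q_1$, i.e.\ $(\partial/\partial t)\,\ell_{\pm}\ne 0$ along it -- and this is the step I expect to be the main obstacle. I would settle it by a direct computation with the explicit formula for $\ell_{\pm}$: for $ad>1$ one checks that $t\mapsto\ell_-(t,s)$ is strictly increasing on $(0,\infty)$ for every fixed $s>0$ (symmetrically, $t\mapsto\ell_+(t,s)$ is strictly monotone when $ad<0$), which yields $(\partial/\partial t)\,\ell_{\pm}\ne 0$ outright. Differentiating the quotient and clearing denominators turns this into a sign inequality among finitely many exponential terms; it is elementary but the bookkeeping needs care. (Should the appendix proof of Lemma~\ref{rr:lemma:ad:notinI} already establish that $k(\cdot,s)$ passes through its unique zero with nonzero slope, i.e.\ $k_t(\mathcal{O}(s),s)<0$, this last step becomes immediate.)
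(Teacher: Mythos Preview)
Your proposal is correct and follows essentially the same route as the paper: the $C^1$ regularity on $(0,s_0)$ comes from the implicit function theorem, and the needed nondegeneracy is exactly the strict monotonicity of $\ell_\pm(\cdot,s)$, which is indeed established explicitly in the appendix proof of Lemma~\ref{rr:lemma:ad:notinI} (so your parenthetical guess is right and the ``main obstacle'' dissolves). The paper applies the implicit function theorem directly to $\ell_\pm - ad = 0$ rather than to $k=0$ and is terser about the continuous extension to the endpoints, but your boundedness-plus-compactness argument for continuity is a valid and more explicit substitute.
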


\begin{proof}
	At several places in this proof, we will use details from the proof of Lemma~\ref{rr:lemma:ad:notinI} given in Appendix~\ref{ap:proof}. \\
	First assume that $ad>q_2/(q_2-p_2)$. We fix $s_1\in(0,s_0)$ and study $k(t,s_1)$ as a function of $t$. Recall $k(t,s_1)=0$ if and only if $ad=\ell_-(t,s_1)$. Also there exists a unique $t_1\ge 0$ satisfying $ad=\ell_-(t_1,s_1)$. Since $(\partial \ell_-/\partial t)(t_1,s_1)>0$, we apply the implicit function theorem at $(t_1,s_1)\in\mathcal{C}$ to locally parameterize the zero set of $k$ by $\mathcal{O}\colon U_{s_1}\to\mathbb{R}$, where $U_{s_1}$ is an open interval containing $s_1$. That is, $k(\mathcal{O}(s),s)=0$, for all $s\in U_{s_1}$. Repeating the process for each $s_1\in (0,s_0)$, we get a $\mathcal{C}^1$ parametrization $\mathcal{O}\colon\, (0,s_0)\to \mathbb{R}$ of a part of the zero level-set of $k$, details are given in Appendix~\ref{ap:proof}. We can continuously extend this function to the endpoints $0$ and $s_0$. Clearly, the graph of this function covers the zero set $\mathcal{C}$ of $k$ due to Lemma~\ref{rr:lemma:ad:notinI}.
	
	We can use similar arguments when $ad<-p_2/(q_2-p_2)$. In this case, $k(t_1,s_1)=0$ if and only if $ad=\ell_+(t_1,s_1)$. Also we use the fact that $\ell_+(t,s_1)$ is strictly decreasing in $t$.
\end{proof}

\begin{lemma}\label{rr:lemma:uniquebulge}
	(With $s_0>0$ as obtained in Lemma~\ref{rr:lemma:k0s}) Given $ad\notin\mathcal{I}$, for each $t_0\ge 0$, $k(t_0,s)$ has at most two zeros as a function of $s$. As a consequence, the function $\mathcal{O}$ has a unique local maxima at $\tilde{s}$ in the interval $(0,s_0)$.\\	
	(i) When $ad>q_2/(q_2-p_2)$, the tuple $(\mathcal{O}(\tilde{s}),\tilde{s})$ is given by the unique nonzero solution $(t,s)\in\mathcal{C}$ of the equation
	\begin{eqnarray}\label{findingmaxpt}
		\frac{({\rm e}^{p_1 t}-{\rm e}^{-q_2 s})({\rm e}^{q_1 t}+{\rm e}^{-q_2 s})}{({\rm e}^{p_1 t}-{\rm e}^{-p_2 s})({\rm e}^{q_1 t}+{\rm e}^{-p_2 s})}{\rm e}^{(q_2-p_2)s}&=&\frac{q_2	}{p_2}.
	\end{eqnarray}	
	(ii) When $ad<-p_2/(q_2-p_2)$, the tuple $(\mathcal{O}(\tilde{s}),\tilde{s})$ is given by the unique nonzero solution $(t,s)\in\mathcal{C}$ of the equation
	\begin{eqnarray}\label{findingmaxpt2}
		\frac{({\rm e}^{q_1 t}-{\rm e}^{-q_2 s})({\rm e}^{p_1 t}+{\rm e}^{-q_2 s})}{({\rm e}^{q_1 t}-{\rm e}^{-p_2 s})({\rm e}^{p_1 t}+{\rm e}^{-p_2 s})}{\rm e}^{(q_2-p_2)s}&=&\frac{q_2	}{p_2}.
	\end{eqnarray}
\end{lemma}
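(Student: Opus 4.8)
The plan is to prove first the assertion ``for each $t_0\ge0$, $k(t_0,s)$ has at most two zeros as a function of $s$'', since the statements about $\mathcal O$ then follow from it together with Lemmas~\ref{rr:lemma:ad:notinI}--\ref{rr:zero:IFT} by essentially topological arguments. Throughout, $ad\notin\mathcal I$ forces $\operatorname{sgn}(abcd)>0$, so $\epsilon=1$ and $bc=ad-1$, and I treat the two admissible cases $ad>q_2/(q_2-p_2)$ and $ad<-p_2/(q_2-p_2)$ (Lemma~\ref{rr:lemma:k0s}) in parallel, writing $\ell$ for $\ell_-$ in the first case and for $\ell_+$ in the second.

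\emph{The at-most-two-zeros claim.} Fix $t_0\ge0$, put $E_1={\rm e}^{-p_1t_0}$, $E_2={\rm e}^{-q_1t_0}$ (so $0<E_2\le E_1\le1$), and substitute $x={\rm e}^{-p_2s}\in(0,1)$, so that ${\rm e}^{-q_2s}=x^{r}$ with $r:=q_2/p_2>1$. Factoring the difference of squares in Lemma~\ref{ktsform} writes $k(t_0,s)$ as the product of
\[
ad(E_1+E_2)(x-x^{r})+(1+E_2x)(E_1x^{r}-1)\quad\text{and}\quad ad(E_1+E_2)(x-x^{r})+(1-E_2x)(1+E_1x^{r}).
\]
For $s>0$ exactly one factor has a constant sign -- the second is positive when $ad>0$ (every summand is positive, since $E_2x<1$ and $x>x^{r}$) and the first is negative when $ad<0$ -- so $k(t_0,\cdot)$ vanishes precisely where the other factor does, and that factor, as a function of $x\in(0,1)$, equals $\varphi(x)=E_1E_2x^{r+1}+\gamma x^{r}+\delta x-1$ if $ad>0$ and $\varphi(x)=-E_1E_2x^{r+1}+\gamma x^{r}+\delta x+1$ if $ad<0$, with $\gamma+\delta=E_1-E_2$. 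The key point is that $\varphi''(x)$ is $x^{r-2}$ times an affine function of $x$, hence vanishes at most once on $(0,1)$, so by two applications of Rolle's theorem $\varphi$ has at most three zeros there. To cut this to two I use the endpoint values: for $t_0>0$ one has $\varphi(0^{+})=-1$ and $\varphi(1^{-})=(E_2+1)(E_1-1)<0$ when $ad>0$ (and $\varphi(0^{+})=1$, $\varphi(1^{-})=(1-E_2)(1+E_1)>0$ when $ad<0$), so $\varphi$ has the same nonzero sign at both ends of $(0,1)$; three simple sign changes are then impossible, and a multiple zero of $\varphi$ would cost a third zero of $\varphi'$. The case $t_0=0$ is exactly Lemma~\ref{rr:lemma:k0s}. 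Running the same computation with an arbitrary level $c>0$ in place of $ad$ changes $\gamma,\delta$ but not the endpoint values of $\varphi$, so for each fixed $t_0>0$ the map $\ell(t_0,\cdot)$ is at most two-to-one on $(0,\infty)$; as it tends to $+\infty$ (case (i)) or $-\infty$ (case (ii)) at both ends, it has there exactly one critical point, its global minimum (resp.\ maximum), at a point I denote $\widehat s(t_0)$. Squeezing ``at most three'' down to ``at most two'' via the constant-sign factor is the technical heart of the proof.

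\emph{Unique local maximum.} By Lemma~\ref{rr:zero:IFT}, $\mathcal O\colon[0,s_0]\to\mathbb R$ is continuous and $C^1$ on $(0,s_0)$. Evaluating the factorization above at $s=0$ (i.e.\ $x=1$) gives $k(t,0)=(1+E_2)(E_1-1)(1-E_2)(1+E_1)<0$ for $t>0$, so $\mathcal O(0)=0$; also $\mathcal O(s_0)=0$ since $k(0,s_0)=0$; and $\mathcal O>0$ on $(0,s_0)$ because $k(0,s)>0$ there (Lemma~\ref{rr:lemma:k0s}) while $\mathcal O(s)$ is the unique zero of $k(\cdot,s)$ (Lemma~\ref{rr:lemma:ad:notinI}). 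Hence $\mathcal O$ attains its maximum at an interior point $\tilde s\in(0,s_0)$, where $\mathcal O'(\tilde s)=0$. The at-most-two-zeros claim says precisely that $\mathcal O$ is at most two-to-one; a continuous, at most two-to-one function on $[0,s_0]$ that vanishes at both endpoints and is positive in between must be strictly increasing on $[0,\tilde s]$ and strictly decreasing on $[\tilde s,s_0]$ (otherwise the intermediate value theorem yields a value attained three times), so $\tilde s$ is its unique local maximum.

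\emph{Identification of $(\mathcal O(\tilde s),\tilde s)$.} A direct computation (differentiate the formula for $\ell$ in $s$ and clear the positive denominators) shows that, for $s>0$, $\partial_s\ell_-(t,s)=0$ is equivalent to~\eqref{findingmaxpt} and $\partial_s\ell_+(t,s)=0$ to~\eqref{findingmaxpt2}. On $(0,s_0)$ the curve $\mathcal C$ satisfies $ad=\ell(t,s)$, and since $\partial\ell/\partial t$ has a fixed sign there (positive in case (i), negative in case (ii), as used in the proof of Lemma~\ref{rr:zero:IFT}), differentiating this relation gives $\mathcal O'(s)\,\partial_t\ell+\partial_s\ell=0$; thus $\mathcal O'(\tilde s)=0$ forces $\partial_s\ell(\mathcal O(\tilde s),\tilde s)=0$, so $(\mathcal O(\tilde s),\tilde s)$ is a solution of the relevant equation lying on $\mathcal C$. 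For uniqueness, let $(t,s)\in\mathcal C$ be a nonzero solution. If $s\in(0,s_0)$ then $t=\mathcal O(s)>0$ and $\partial_s\ell(t,s)=0$, so $s=\widehat s(t)$ and $ad=\ell(t,s)$ equals $\mu(t):=\min_\sigma\ell(t,\sigma)$ (resp.\ $\max_\sigma$ in case (ii)); but $\mu'(t)=\partial_t\ell(t,\widehat s(t))$ by the envelope theorem, which has a fixed sign, so $\mu$ is strictly monotone, $t$ is determined by $ad=\mu(t)$, and then so is $s=\widehat s(t)$ -- hence $(\mathcal O(\tilde s),\tilde s)$ is the only such point. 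The point $s=0$ gives only $(0,0)$, and $s=s_0$ is excluded because $\ell(0,\cdot)$ is strictly monotone (built from the increasing function $m(0,\cdot)$ of Lemma~\ref{rr:lemma:k0s}), so $\partial_s\ell(0,s_0)\ne0$. This establishes~(i) and~(ii).
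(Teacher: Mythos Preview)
Your argument is correct and takes a genuinely different route from the paper's. The paper proves the at-most-two-zeros claim by computing $\partial_s\ell_\pm(t_0,s)$ explicitly, showing that its sign is governed by $L(t_0,s)-q_2/p_2$ (resp.\ $\widetilde L(t_0,s)-q_2/p_2$), and then establishing through a factorisation $L=R_1R_2$ and a pair of hyperbolic inequalities that $L(t_0,\cdot)$ is strictly increasing in $s$. This gives directly that $\ell_\pm(t_0,\cdot)$ is unimodal, and the equation \eqref{findingmaxpt} (resp.\ \eqref{findingmaxpt2}) falls out as the condition $\partial_s\ell_\pm=0$.

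You instead exploit the difference-of-squares factorisation from Lemma~\ref{ktsform}, pass to the variable $x={\rm e}^{-p_2 s}$, kill one factor by a sign check, and observe that the surviving factor $\varphi$ has $\varphi''$ equal to $x^{r-2}$ times an affine function; Rolle then caps the zero count at three, and the matching endpoint signs cut it to two. Your identification of $(\mathcal O(\tilde s),\tilde s)$ via implicit differentiation of $ad=\ell(\mathcal O(s),s)$ is the same as the paper's, but for uniqueness you invoke monotonicity of $\mu(t)=\min_s\ell_-(t,s)$ (resp.\ $\max_s\ell_+$) via the envelope theorem, whereas the paper gets uniqueness essentially for free from the monotonicity of $L(t_0,\cdot)$ already established. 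Your approach trades the somewhat delicate monotonicity computation for $R_1,R_2$ against a clean Rolle/sign-counting argument, at the cost of having to re-derive the critical-point equation and the uniqueness separately at the end. Both approaches are of comparable length; yours is perhaps more elementary, while the paper's is more direct in that the target equation \eqref{findingmaxpt} appears naturally as the vanishing locus of $\partial_s\ell_-$ rather than being verified after the fact.
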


\begin{proof}
	See Appendix~\ref{ap:proof} for proof. 
\end{proof}

\begin{example}\label{egrr}
	Let $A_1=\begin{pmatrix}
		0.01 & -0.1\\ 0.231 & -0.31
	\end{pmatrix}$ and $A_2=\begin{pmatrix}
		-0.1 & 0\\ 0 & -0.2
	\end{pmatrix}$. Then $M=P_2^{-1}P_1=\begin{pmatrix}
		1 & 1\\ 1.1 & 2.1
	\end{pmatrix}$, hence $ad=2.1$. Thus we use Lemma~\ref{rr:lemma:uniquebulge} (i) to compute $(\mathcal{O}(\tilde{s}),\tilde{s})$. Denoting the expression on the left hand side of~\eqref{findingmaxpt} as \begin{eqnarray*}
		L(t,s)&=&\frac{({\rm e}^{0.1 t}-{\rm e}^{-0.2 s})({\rm e}^{0.2 t}+{\rm e}^{-0.2 s})}{({\rm e}^{0.1 t}-{\rm e}^{-0.1 s})({\rm e}^{0.2 t}+{\rm e}^{-0.1 s})}{\rm e}^{0.1 s},
	\end{eqnarray*}
	the solution of~\eqref{findingmaxpt} is the point of intersection of the curve $L(t,s)=2$ with the zero set $\mathcal{C}$ of $k$, refer Figure~\ref{fig:egrr}. The solution $(0.155351,2.4064)$ was computed in Wolfram Mathematica 11.0 using the command: \verb|FindRoot[{k[t,s]==0,L[t,s]==2},{{t,0.1},{s,2.5}}]|.
	\begin{figure}[h!]
		\centering
		\includegraphics[width=.4\textwidth]{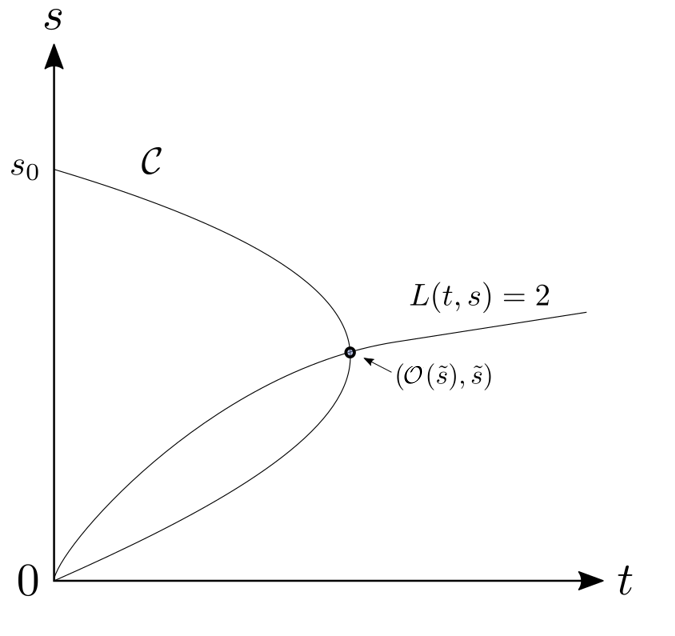}
		\caption{The curves in Example~\ref{egrr}.}
		\label{fig:egrr}
	\end{figure}
	
\end{example}

\begin{remark}\label{zerosetpos} 
	Using the results obtained so far, the following facts about $k(t,s)$ are worth mentioning when $ad\notin \mathcal{I}$.
	\begin{enumerate}[(i)]
		\item By Lemma~\ref{rr:zero:IFT}, the zero set $\mathcal{C}$ is the graph of continuous function $t=\mathcal{O}(s)$ in the domain $[0,s_0]$, where $\mathcal{O}$ is differentiable in $(0,s_0)$. Also $\mathcal{O}(s_0)=\mathcal{O}(0)=0$. 
		\item The zero set $\mathcal{C}$ splits the first quadrant $Q_1$ into $\mathcal{B}\cup \mathcal{U}\cup \mathcal{C}$, where $\mathcal{B}$ and $\mathcal{U}$ are bounded and unbounded regions, respectively. The function $k(t,s)$ is strictly positive on $\mathcal{B}$ and strictly negative on $\mathcal{U}$ by Lemma~\ref{rr:lemma:k0s} and using the continuity of $k$. See Figure~\ref{fig:zerosetpos}.
		
		\begin{figure}[h!]
			\centering
			\includegraphics[scale=0.5]{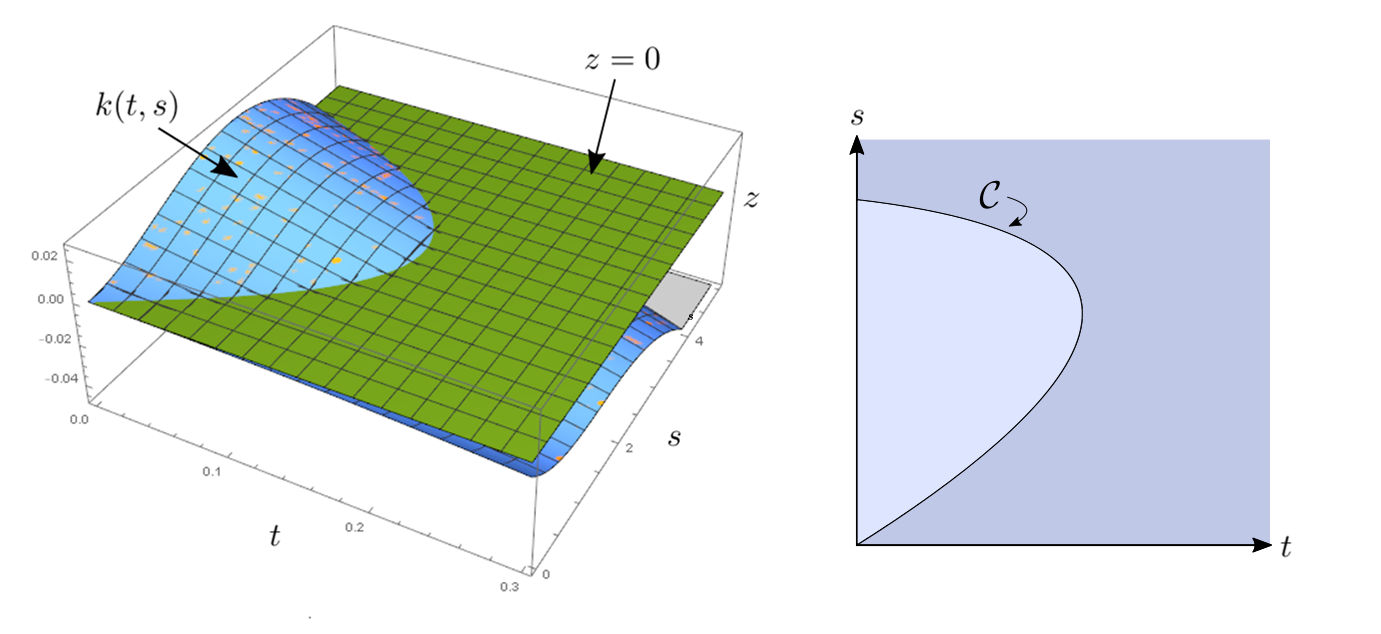}
			\caption{Plot showing $\mathcal{C}=\{(t,s)\in Q_1\ \vert\ k(t,s)=0\}$, for $ad\notin\mathcal{I}$. The first plot corresponds to values in Example~\ref{egrr}.}
			\label{fig:zerosetpos}
		\end{figure}
	\end{enumerate}
\end{remark}

\begin{lemma}\label{rr:direction:k}
	Given $ad>q_2/(q_2-p_2)$ and $\gamma\ge 0$, there is at most one positive solution $t$ of $k(t,\gamma t)=0$. 
\end{lemma}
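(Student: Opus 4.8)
The plan is to restrict $k$ to the ray $s=\gamma t$, rephrase the resulting one–variable equation in terms of $\ell_{-}$, and then exhibit a hyperbolic form of that function from which strict monotonicity is immediate. First I would handle the degenerate direction $\gamma=0$: substituting $s=0$ into the expression of Lemma~\ref{ktsform} (here $\epsilon=1$, since $ad>q_2/(q_2-p_2)>1$, and recall $bc=ad-1$) collapses it to
\[
k(t,0)=\big(\mathrm{e}^{-p_1t}-\mathrm{e}^{-q_1t}\big)^2-\big(1-\mathrm{e}^{-(p_1+q_1)t}\big)^2=-\big(1-\mathrm{e}^{-2p_1t}\big)\big(1-\mathrm{e}^{-2q_1t}\big),
\]
which is strictly negative for $t>0$; so there is no positive solution in that case. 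For $\gamma>0$ and $t>0$ the point $(t,\gamma t)$ lies in $Q_1$, and since $\ell_{+}<0<ad$ there, the description of the zero set of $k$ reduces to the single branch: $k(t,\gamma t)=0$ exactly when $\ell_{-}(t,\gamma t)=ad$. Setting $\psi(t):=\ell_{-}(t,\gamma t)$, it therefore suffices to prove that $\psi$ is strictly monotone on $(0,\infty)$.

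The key step — the one that makes the argument work — is to notice that restriction to a ray turns $\ell_{-}$ into a ratio of products of $\sinh$ and $\cosh$. Writing each of the four factors of $\ell_{-}(t,\gamma t)$ in the form $2\mathrm{e}^{-(\,\cdot\,)t/2}\sinh(\,\cdot\,)$ or $2\mathrm{e}^{-(\,\cdot\,)t/2}\cosh(\,\cdot\,)$, one checks that the exponential prefactor $\mathrm{e}^{-(p_1+q_1+(p_2+q_2)\gamma)t/2}$ occurs with the same total exponent in numerator and denominator, hence cancels, leaving
\[
\psi(t)=\frac{\sinh\!\big((\omega+\varepsilon)t\big)\,\cosh\!\big((\omega+\delta)t\big)}{\cosh(\delta t)\,\sinh(\varepsilon t)},\qquad\omega=\tfrac{p_1+p_2\gamma}{2},\ \ \delta=\tfrac{q_1-p_1}{2},\ \ \varepsilon=\tfrac{(q_2-p_2)\gamma}{2},
\]
with $\omega,\delta,\varepsilon$ all strictly positive. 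The only piece of nonroutine bookkeeping is the pair of identities $\tfrac12(p_1+q_2\gamma)-\varepsilon=\tfrac12(q_1+p_2\gamma)-\delta=\omega$, i.e. the argument of the $\sinh$ in the numerator exceeds that in the denominator by the same amount $\omega$ as the argument of the first $\cosh$ exceeds that of the second; this common increment is the structural feature driving the monotonicity.

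To conclude I would differentiate the logarithm:
\[
(\log\psi)'(t)=\Big[(\omega+\varepsilon)\coth\!\big((\omega+\varepsilon)t\big)-\varepsilon\coth(\varepsilon t)\Big]+\Big[(\omega+\delta)\tanh\!\big((\omega+\delta)t\big)-\delta\tanh(\delta t)\Big].
\]
For each fixed $t>0$ the maps $x\mapsto x\coth(xt)$ and $x\mapsto x\tanh(xt)$ are strictly increasing on $(0,\infty)$: the second because its $x$–derivative $\tanh(xt)+xt\,\sech^{2}(xt)$ is evidently positive, the first because the sign of its $x$–derivative equals that of $\tfrac12\sinh(2y)-y>0$ with $y=xt$. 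Since $\omega>0$, both bracketed terms are then positive, so $(\log\psi)'>0$, $\psi$ is strictly increasing, and $\psi(t)=ad$ has at most one positive solution. (Indeed $\psi$ rises from $\tfrac{p_1+q_2\gamma}{(q_2-p_2)\gamma}$ to $+\infty$ as $t$ runs over $(0,\infty)$, so there is exactly one root when $ad$ exceeds this limit and none otherwise — in either case, at most one.)

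The main difficulty is simply \emph{finding} the hyperbolic reformulation. Without restricting to a ray neither $k$ nor $\ell_{-}$ shows any obvious monotonicity, and a naive sign-change count along the ray — using the difference-of-squares form of Lemma~\ref{ktsform} — bounds the number of positive zeros only by three (and by five when $\gamma$ is small), which is far too weak. What rescues the argument is the cancellation of the exponential prefactors on the ray together with the special relation among the four arguments, after which monotonicity is a short exercise. Geometrically the lemma says that $\ell_{-}$ is strictly increasing along every ray from the origin — equivalently, that $k$ is strictly decreasing along such rays at each point of its zero set $\mathcal{C}$ — so a ray can leave the region $\{k>0\}$ at most once.
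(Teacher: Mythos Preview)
Your proof is correct and follows the same overall strategy as the paper: both reduce the claim to strict monotonicity of $t\mapsto\ell_{-}(t,\gamma t)$ and then invoke the equivalence $k(t,\gamma t)=0\iff ad=\ell_{-}(t,\gamma t)$ (valid since $\ell_{+}<0<ad$ on $Q_1$). The paper's argument is shorter but also thinner: it observes that $\ell_{-}^{(\gamma)}(t):=\ell_{-}(t,\gamma t)$ is obtained from $\ell_{-}^{(1)}$ by the parameter substitution $(p_2,q_2)\mapsto(p_2\gamma,q_2\gamma)$, and then simply \emph{asserts} that $\ell_{-}^{(1)}$ is increasing in $t$, without justification. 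Your hyperbolic rewriting
\[
\ell_{-}(t,\gamma t)=\frac{\sinh((\omega+\varepsilon)t)\,\cosh((\omega+\delta)t)}{\cosh(\delta t)\,\sinh(\varepsilon t)},\qquad \omega=\tfrac{p_1+p_2\gamma}{2},\ \delta=\tfrac{q_1-p_1}{2},\ \varepsilon=\tfrac{(q_2-p_2)\gamma}{2},
\]
followed by the logarithmic-derivative computation, actually supplies that missing step; the cancellation of the exponential prefactors and the monotonicity of $x\mapsto x\coth(xt)$ and $x\mapsto x\tanh(xt)$ are checked correctly. So the two routes agree on the idea but your version is self-contained where the paper's is not, and the explicit limit $\psi(0^{+})=(p_1+q_2\gamma)/((q_2-p_2)\gamma)$ you record is a useful bonus.
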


\begin{proof}
	See Appendix~\ref{ap:proof} for a proof.
\end{proof}

\begin{remark}\label{zeroset}
	Using the results obtained so far, the following observations can be made about the zero set $\mathcal{C}$ of the function $k$, when $ad\notin\mathcal{I}$. 
	\begin{enumerate}
		\item When $ad>q_2/(q_2-p_2)$, as a consequence of Lemma~\ref{rr:lemma:uniquebulge} and Remark~\ref{zerosetpos}, there are three possible classifications of the zero set $\mathcal{C}$, as shown in Figure~\ref{fig:threecases}.
		\begin{figure}[h!]
			\centering
			\includegraphics[width=.9\textwidth]{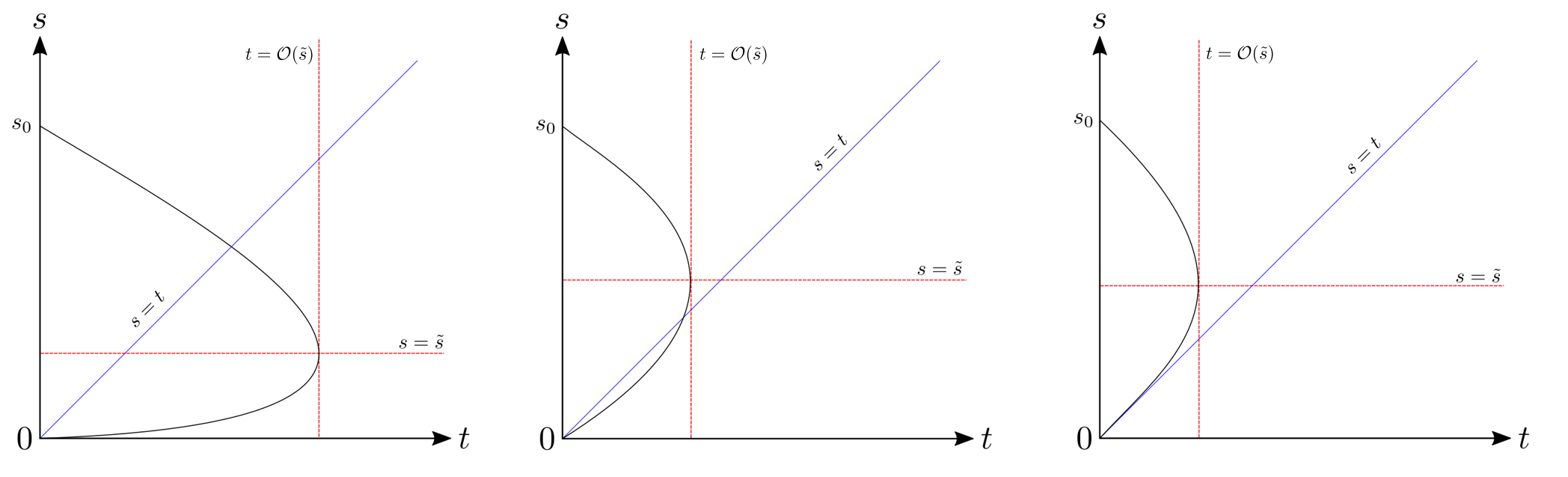}
			\caption{The three cases when $ad>q_2/(q_2-p_2)$.}
			\label{fig:threecases}
		\end{figure}
		The first one corresponds to when $\mathcal{O}(\tilde{s})>\tilde{s}$, and the other two correspond to when $\mathcal{O}(\tilde{s})\le \tilde{s}$.
		\item When $ad<-p_2/(q_2-p_2)$, there is an additional possibility apart from the above three, which is shown in Figure~\ref{fig:addcase}. Here, there are two roots of $k$ on the line $s=t$. Also $\mathcal{O}(\tilde{s})>\tilde{s}$. This possibility does not arise when $ad>q_2/(q_2-p_2)$ due to Lemma~\ref{rr:direction:k}.
		\begin{figure}[h!]
			\centering
			\includegraphics[width=.3\textwidth]{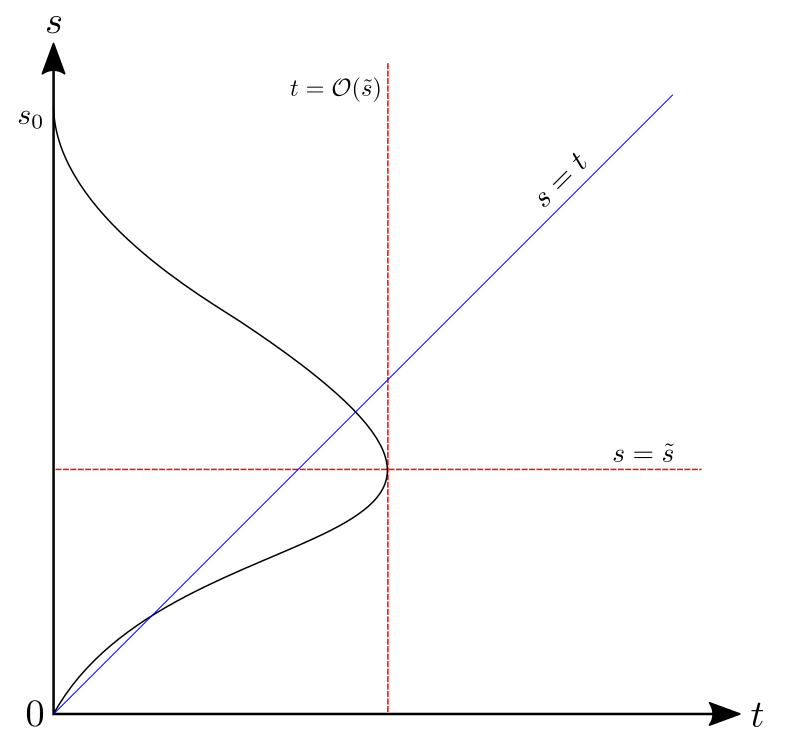}
			\caption{An additional case when $ad<-p_2/(q_2-p_2)$.}
			\label{fig:addcase}
		\end{figure}
		
	\end{enumerate}
\end{remark}

For $t_0>0$, let $\lambda_0=\sqrt[8]{\frac{b^2d^2}{a^2c^2}\, {\rm{e}}^{-2(q_1-p_1)t_0}}$. It is easy to see that $f(\lambda_0,t_0,s)=k(t_0,s)$, for all $s\ge 0$.

Recall that $\left\Vert M_{D_1,D_2}^{-1} {\rm{e}}^{J_2 s} . \, M_{D_1,D_2} {\rm{e}}^{J_1 t}\right\Vert<1$ if and only if $f(\lambda_1,t,s)<0$. For a fixed $s$, the term $\left\Vert M_{D_1,D_2}^{-1} {\rm{e}}^{J_2 s} . \, M_{D_1,D_2} {\rm{e}}^{J_1 t}\right\Vert$ is strictly decreasing in $t$ on $[0,\infty)$. Thus, for each $\lambda_1$ and $s$, $f(\lambda_1,t,s)$ has at most one root as a function of $t$. Analogous to Lemma~\ref{rr:lemma:k0s}, we have the following result.

\begin{lemma}\label{rr:lemma:f}
	Given $ad\notin \mathcal{I}$, there is a unique $\widetilde{s_0}>0$ such that $f(\lambda_0,0,\tilde{s}_0)=0$.
\end{lemma}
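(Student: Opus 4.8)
Here is the plan for proving Lemma~\ref{rr:lemma:f}.

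The plan is to substitute $t=0$ into~\eqref{mainfunc}, simplify $f(\lambda_0,0,s)$ to a one-term expression, and then read off the claim from the behaviour of the function $m(0,\cdot)$ analysed in the proof of Proposition~\ref{rr:thm:ad=0,1}. (Note that $ad\notin\mathcal{I}$ already forces $ad\notin\{0,1\}$, so $M$ has all nonzero entries and $\lambda_0>0$ is well defined.)

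First I would put $u={\rm e}^{-p_2s}$ and $v={\rm e}^{-q_2s}$, so that $0<v<u\le1$ for $s\ge0$ and $u>v$ for $s>0$. Setting $t=0$ in~\eqref{mainfunc} and expanding the first and fourth squares, one obtains
\[
f(\lambda_0,0,s)=(a^2d^2+b^2c^2)(u^2+v^2)-4abcd\,uv+\Bigl(\tfrac{b^2d^2}{\lambda_0^4}+a^2c^2\lambda_0^4\Bigr)(u-v)^2-1-u^2v^2.
\]
Now I would use $ad-bc=\det M=\pm1$, hence $(ad-bc)^2=1$ and $a^2d^2+b^2c^2=1+2abcd$, to rewrite the first two terms as $(u^2+v^2)+2abcd\,(u-v)^2$; together with the identity $u^2+v^2-1-u^2v^2=-(1-u^2)(1-v^2)$ this collapses $f$ to
\[
f(\lambda_0,0,s)=E\,(u-v)^2-(1-u^2)(1-v^2),\qquad E:=2abcd+\tfrac{b^2d^2}{\lambda_0^4}+a^2c^2\lambda_0^4 .
\]
This is exactly the analogue, with $4abcd$ replaced by $E$, of the factored form of $k(0,s)$ used in the proof of Lemma~\ref{rr:lemma:k0s}.

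For $s>0$ we have $u\neq v$, so $f(\lambda_0,0,s)=(u-v)^2\bigl(E-m(0,s)\bigr)$, where $m(0,s)=\dfrac{(1-u^2)(1-v^2)}{(u-v)^2}$ is precisely the function appearing in the proof of Proposition~\ref{rr:thm:ad=0,1}: it is strictly increasing on $(0,\infty)$, with $\lim_{s\to0^+}m(0,s)=\dfrac{4p_2q_2}{(q_2-p_2)^2}$ and $m(0,s)\to\infty$ as $s\to\infty$. Hence $E-m(0,s)$ is strictly decreasing, positive for $s$ near $0$ and negative for $s$ large, so $f(\lambda_0,0,\cdot)$ changes sign exactly once on $(0,\infty)$, i.e.\ has a unique positive zero, provided $E>\dfrac{4p_2q_2}{(q_2-p_2)^2}$.

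Establishing this last inequality is the only real content of the lemma, and the step I expect to be the main obstacle. Since $ad\notin\mathcal{I}$ we have $\text{sgn}(abcd)>0$, so AM--GM gives $\tfrac{b^2d^2}{\lambda_0^4}+a^2c^2\lambda_0^4\ge2|abcd|=2abcd$ and therefore $E\ge4abcd$. With $bc=ad-1$ we have $abcd=ad(ad-1)$, and the proof of Lemma~\ref{rr:lemma:k0s} already records that $4ad(ad-1)>\dfrac{4p_2q_2}{(q_2-p_2)^2}$ exactly when $ad\notin\mathcal{I}$: the two roots of $4ad(ad-1)=\tfrac{4p_2q_2}{(q_2-p_2)^2}$ are the endpoints of $\mathcal{I}$, and $ad\mapsto ad(ad-1)$ is convex with minimiser inside $\mathcal{I}$. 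Thus $E\ge4abcd=4ad(ad-1)>\dfrac{4p_2q_2}{(q_2-p_2)^2}$, and the unique $\widetilde{s_0}>0$ follows. Apart from the algebraic collapse to the compact form of $f(\lambda_0,0,s)$ and the care needed to see that the AM--GM step genuinely uses $abcd>0$ (supplied by $ad\notin\mathcal{I}$), everything else is a direct appeal to Proposition~\ref{rr:thm:ad=0,1} and Lemma~\ref{rr:lemma:k0s}.
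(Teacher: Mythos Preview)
Your proof is correct and follows essentially the same route as the paper. Both arguments factor $f(\lambda_0,0,s)$ as $(e^{-p_2s}-e^{-q_2s})^2\bigl(E-m(0,s)\bigr)$ with the same constant $E$ (the paper writes it as $ad(ad-1)\{4+(1/\sqrt{\mu}-\sqrt{\mu})^2\}$ with $\mu=e^{-(q_1-p_1)t_0}$, which expands to your $2abcd+\frac{b^2d^2}{\lambda_0^4}+a^2c^2\lambda_0^4$), and then conclude from the monotonicity of $m(0,\cdot)$ together with $E\ge 4ad(ad-1)>4p_2q_2/(q_2-p_2)^2$. The only cosmetic difference is that the paper reaches the factored form via the identity $f(\lambda_0,0,s)=k(0,s)+|abcd|\,(1/\sqrt{\mu}-\sqrt{\mu})^2(e^{-p_2s}-e^{-q_2s})^2$ and the known factorisation of $k(0,s)$, whereas you substitute $t=0$ directly into~\eqref{mainfunc} and collapse the algebra by hand; your path is slightly more self-contained but otherwise identical in content.
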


\begin{proof}
	See Appendix~\ref{ap:proof} for a proof.
\end{proof}

\begin{lemma}\label{rr:funiqueroot}
	(With $\widetilde{s_0}>0$ as obtained in Lemma~\ref{rr:lemma:f}) Given $ad\notin\mathcal{I}$, for each $s_1\in[0,\widetilde{s_0}]$, there exists a unique $t_1\ge 0$ (depending on $s_1$) such that  $f(\lambda_0,t_1,s_1)=0$. For $s_1>\widetilde{s_0}$, $f(\lambda_0,t,s_1)<0$, for all $t\ge0$.
\end{lemma}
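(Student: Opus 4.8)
The plan is to reduce Lemma~\ref{rr:funiqueroot} to a one–variable monotonicity statement in $t$, with the sign of $f(\lambda_0,0,\cdot)$ playing the role that the zero set of $k$ played in Lemma~\ref{rr:lemma:ad:notinI}. I would use two ingredients that are already available. First, for every fixed $s\ge 0$ the function $N_s(t):=\left\|M_{D_1,D_2}^{-1}{\rm e}^{J_2 s}\,M_{D_1,D_2}\,{\rm e}^{J_1 t}\right\|$ is continuous and strictly decreasing on $[0,\infty)$ (the property quoted just before Lemma~\ref{rr:lemma:f}, valid for the scaling $\lambda_1=\lambda_0$), it satisfies $N_s(t)\to 0$ as $t\to\infty$ because ${\rm e}^{J_1 t}\to 0$, and $f(\lambda_0,t,s)<0$ is equivalent to $N_s(t)<1$. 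Second, the sign of $f(\lambda_0,0,\cdot)$: the proof of Lemma~\ref{rr:lemma:f} yields, exactly as Lemma~\ref{rr:lemma:k0s} does for $k(0,\cdot)$, that $f(\lambda_0,0,s)>0$ for $s\in(0,\widetilde{s_0})$ and $f(\lambda_0,0,s)<0$ for $s>\widetilde{s_0}$. I would also note the boundary value $f(\lambda_0,0,0)=0$, obtained by putting $t=s=0$ in~\eqref{mainfunc} and using $(ad-bc)^2=\det(M)^2=1$.

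The core step is the following dichotomy, obtained from the first ingredient by fixing $s$. Since $N_s$ is continuous, strictly decreasing and tends to $0$, the set $\{t\ge 0:N_s(t)\ge 1\}$ is empty when $N_s(0)<1$, and otherwise is a bounded interval $[0,t_1]$ on which $N_s=1$ holds only at $t=t_1$. Reading this through $f(\lambda_0,t,s)<0\iff N_s(t)<1$: if $f(\lambda_0,0,s)<0$ then $f(\lambda_0,t,s)<0$ for all $t\ge 0$; and if $f(\lambda_0,0,s)\ge 0$ then there is a unique $t_1\ge 0$ with $f(\lambda_0,t_1,s)=0$, with $f(\lambda_0,t,s)>0$ on $[0,t_1)$ and $f(\lambda_0,t,s)<0$ on $(t_1,\infty)$. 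This is consistent with the remark that $f(\lambda_0,\cdot,s)$ has at most one zero.

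It remains to feed in the sign of $f(\lambda_0,0,s_1)$ according to where $s_1$ lies. For $s_1=0$ the boundary value gives $f(\lambda_0,0,0)=0$, so $t_1=0$ is the unique zero. For $s_1\in(0,\widetilde{s_0})$ the second ingredient gives $f(\lambda_0,0,s_1)>0$, so the dichotomy produces the unique $t_1\ge 0$. For $s_1=\widetilde{s_0}$, Lemma~\ref{rr:lemma:f} gives $f(\lambda_0,0,\widetilde{s_0})=0$, so again $t_1=0$ is the unique zero. This proves the first assertion on all of $[0,\widetilde{s_0}]$. For $s_1>\widetilde{s_0}$ the second ingredient gives $f(\lambda_0,0,s_1)<0$, i.e.\ $N_{s_1}(0)<1$, whence $f(\lambda_0,t,s_1)<0$ for every $t\ge 0$; this is the second assertion.

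I expect no real obstacle; the only thing to be careful about is that the sign statement ``$f(\lambda_0,0,s)>0$ on $(0,\widetilde{s_0})$ and $<0$ on $(\widetilde{s_0},\infty)$'' is genuinely part of the proof of Lemma~\ref{rr:lemma:f}. It is the verbatim analogue of the factorisation $k(0,s)=({\rm e}^{-p_2 s}-{\rm e}^{-q_2 s})^2\bigl(4ad(ad-1)-m(0,s)\bigr)$ combined with the monotonicity of $m(0,\cdot)$ used in Lemma~\ref{rr:lemma:k0s}, so it carries over without difficulty; everything else is routine case-checking on $s_1$.
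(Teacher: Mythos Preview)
Your proposal is correct and follows exactly the paper's own route: the paper's proof simply says ``Follows from Lemma~\ref{rr:lemma:f} and arguments preceding it,'' and those arguments are precisely the strict monotonicity of $t\mapsto\|M_{D_1,D_2}^{-1}{\rm e}^{J_2 s}M_{D_1,D_2}{\rm e}^{J_1 t}\|$ together with the sign information on $f(\lambda_0,0,\cdot)$ established in the proof of Lemma~\ref{rr:lemma:f}. You have merely written out the implicit case analysis explicitly.
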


\begin{proof}
	Follows from Lemma~\ref{rr:lemma:f} and arguments preceding it.
\end{proof}

\noindent Now we present the main result of this section when both subsystems are real diagonalizable.

\begin{theorem}\label{rr:thm12}
	(With notations in this section) \\
	i) If $ad\in\mathcal{I}$, then the switched system~\eqref{eq:system} is stable for all signals $\sigma$. Let $\tau_{1,2}=0$.\\	
	ii) If $ad>q_2/(q_2-p_2)$, calculate the unique nonzero solution $(t_{sol},s_{sol})$ satisfying the system
	\begin{eqnarray}\label{eq:2}
		\frac{({\rm e}^{p_1 t}-{\rm e}^{-q_2 s})({\rm e}^{q_1 t}+{\rm e}^{-q_2 s})}{({\rm e}^{p_1 t}-{\rm e}^{-p_2 s})({\rm e}^{q_1 t}+{\rm e}^{-p_2 s})}{\rm e}^{(q_2-p_2)s}&=&\frac{q_2	}{p_2},\\
		\left(ad\left({\rm{e}}^{-p_1t-p_2s}-{\rm{e}}^{-q_1t-q_2s}\right)-bc\left({\rm{e}}^{-p_1t-q_2s}-{\rm{e}}^{-p_2s-q_1t}\right)\right)^2&=&\left({\rm{e}}^{-(p_1+q_1)t-(p_2+q_2)s}-1\right)^2.\nonumber
	\end{eqnarray}
\begin{enumerate}[(a)]
	\item If $s_{sol}\ge t_{sol}$, let $\tau_{1,2}=t_{sol}$. \\
	\item Otherwise if $s_{sol}< t_{sol}$, let $\tau_{1,2}$ be the unique positive solution $t$  of \begin{eqnarray*}
		\left(ad\left({\rm{e}}^{-p_1t-p_2t}-{\rm{e}}^{-q_1t-q_2t}\right)-bc\left({\rm{e}}^{-p_1t-q_2t}-{\rm{e}}^{-p_2t-q_1t}\right)\right)^2&=&\left({\rm{e}}^{-(p_1+q_1+p_2+q_2)t}-1\right)^2.
	\end{eqnarray*}
\end{enumerate}
		iii) If $ad<-p_2/(q_2-p_2)$, calculate the unique nonzero solution $(t_{sol},s_{sol})$ satisfying the system
	\begin{eqnarray}\label{eq:3}
		\frac{ ({\rm e}^{q_2 s+p_1 t}+1)({\rm e}^{q_2 s+q_1 t}-1)}{({\rm e}^{p_2 s+p_1 t}+1)({\rm e}^{p_2 s+q_1 t}-1)}{\rm e}^{-(q_2-p_2)s}&=&\frac{q_2	}{p_2},\\
		\left(ad\left({\rm{e}}^{-p_1t-p_2s}-{\rm{e}}^{-q_1t-q_2s}\right)-bc\left({\rm{e}}^{-p_1t-q_2s}-{\rm{e}}^{-p_2s-q_1t}\right)\right)^2&=&\left({\rm{e}}^{-(p_1+q_1)t-(p_2+q_2)s}-1\right)^2.\nonumber
	\end{eqnarray}
\begin{enumerate}[(a)]
	\item If $s_{sol}\ge t_{sol}$, let $\tau_{1,2}=t_{sol}$. 
	\item Otherwise if $s_{sol}< t_{sol}$, let $\tau_{1,2}$ be the unique solution $t>s_{sol}$ of \begin{eqnarray*}
		\left(ad\left({\rm{e}}^{-p_1t-p_2t}-{\rm{e}}^{-q_1t-q_2t}\right)-bc\left({\rm{e}}^{-p_1t-q_2t}-{\rm{e}}^{-p_2t-q_1t}\right)\right)^2&=&\left({\rm{e}}^{-(p_1+q_1+p_2+q_2)t}-1\right)^2.
	\end{eqnarray*}
\end{enumerate}
	Then the switched system~\eqref{eq:system} is stable for all signals $\sigma\in S_{\tau_{1,2}}$ and asymptotically stable for all signals $\sigma\in S_{\tau_{1,2}}'$.
\end{theorem}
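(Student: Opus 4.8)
The plan is to reduce the stability question to controlling the single function $f(\lambda_1,t,s)$ of~\eqref{mainfunc} via the auxiliary function $k(t,s)$, and then to pick, for each $t$, the optimal scaling $\lambda_1$. First I would recall from~\eqref{eq:tau}–\eqref{eq:newform-12} and Remark~\ref{rmk:stab} that it suffices to exhibit $\tau_{1,2}$ with $f(\lambda_1,t,s)<0$ (strictly, off the corner point) for all $t,s\ge\tau_{1,2}$ for some admissible $\lambda_1$ depending only on the construction, not on the signal. Part (i) is immediate: if $ad\in\mathcal I$ then by Proposition~\ref{rr:thm:ad=0,1} (entry zero) or Proposition~\ref{rr:thm:adinI} ($ad\in\mathcal I\setminus\{0,1\}$) one already has $f(\lambda_1,t,s)<0$ for \emph{all} $t,s>0$ with a fixed $\lambda_1$, so $\tau_{1,2}=0$ works and the switched system is stable for every $\sigma$.

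For parts (ii) and (iii) the strategy is a ``sliding-$\lambda_1$'' argument. Fix any target time $t_0>0$ and set $\lambda_0=\sqrt[8]{(b^2d^2/a^2c^2)\,{\rm e}^{-2(q_1-p_1)t_0}}$, so that $f(\lambda_0,t_0,s)=k(t_0,s)$ for all $s\ge0$; moreover $k(t,s)\le f(\lambda_1,t,s)$ for every $\lambda_1$ and all $t,s\ge0$. Thus if we can show $k(t,s)\le0$ on the quadrant region $\{t,s\ge\tau_{1,2}\}$, with equality only possibly at the corner, we are done — but we must also verify the strictness on the whole ray structure, which is where Remark~\ref{rmk:stab}(2) needs the ``equality only at $t=s=\tau_{1,2}$'' property. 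So the bulk of the proof is a geometric analysis of the zero set $\mathcal C$ of $k$. By Remark~\ref{zerosetpos} and Remark~\ref{zeroset}, $\mathcal C$ is the graph $t=\mathcal O(s)$, $s\in[0,s_0]$, with $\mathcal O(0)=\mathcal O(s_0)=0$, a unique interior maximum at $\tilde s$ characterized by~\eqref{findingmaxpt} (resp.~\eqref{findingmaxpt2}), and $k>0$ inside the bounded lobe $\mathcal B$, $k<0$ on the unbounded complement $\mathcal U$. The definition of $\tau_{1,2}$ amounts to choosing the smallest $\tau$ so that the closed quadrant $\{t,s\ge\tau\}$ avoids the open lobe $\mathcal B$.

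There are two cases according to the position of the peak relative to the diagonal. If the ``bulge'' satisfies $\mathcal O(\tilde s)\le\tilde s$ (the last two pictures of Figure~\ref{fig:threecases}), then the rightmost extent of $\mathcal B$ in the $t$-direction is attained at a point with $s\ge t$, i.e.\ at $(t_{sol},s_{sol})$ solving~\eqref{eq:2} (resp.~\eqref{eq:3}) together with $k=0$; taking $\tau_{1,2}=t_{sol}$ pushes the vertical line $t=\tau_{1,2}$ to the right of $\overline{\mathcal B}$, so $\{t,s\ge\tau_{1,2}\}$ meets $\overline{\mathcal B}$ at most at the single point $(t_{sol},s_{sol})=(\tau_{1,2},s_{sol})$ on its boundary — this is case (ii)(a)/(iii)(a). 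If instead $\mathcal O(\tilde s)>\tilde s$ (first picture of Figure~\ref{fig:threecases}, and also Figure~\ref{fig:addcase} when $ad<-p_2/(q_2-p_2)$), the lobe $\mathcal B$ pokes across the diagonal, and the binding constraint becomes the intersection of $\mathcal C$ with the line $s=t$; here Lemma~\ref{rr:direction:k} (and its analogue via~\eqref{findingmaxpt2} for the negative-$ad$ case, which may give two diagonal roots — take the larger) guarantees the relevant root is unique, and setting $\tau_{1,2}$ to be that root of the displayed one-variable equation $k(t,t)=0$ again makes $\{t,s\ge\tau_{1,2}\}$ disjoint from $\mathcal B$ except for the corner $(\tau_{1,2},\tau_{1,2})$ — this is case (ii)(b)/(iii)(b). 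In every case one then invokes the inequality $k\le f(\lambda_1,\cdot,\cdot)$ with the $t$-dependent optimal choice $\lambda_1=\lambda_0(t)$ and the fact that $f(\lambda_1,t,s)$ is strictly decreasing in $t$ for fixed $s$ to upgrade from ``$k\le0$ on the quadrant region'' to ``$f<0$ there, with equality impossible except at the corner'', and concludes stability on $S_{\tau_{1,2}}$ and asymptotic stability on $S'_{\tau_{1,2}}$ by Remark~\ref{rmk:stab}.

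The main obstacle, and the step I expect to be delicate, is the case split on the sign of $\mathcal O(\tilde s)-\tilde s$ together with the verification that the point furnishing $\tau_{1,2}$ really is the ``last exit'' of the quadrant corner from the forbidden lobe — equivalently, that for $t\ge\tau_{1,2}$ the whole vertical segment $\{t\}\times[\tau_{1,2},\infty)$ lies in $\mathcal U\cup\mathcal C$. This requires knowing that $\mathcal O$ is unimodal (Lemma~\ref{rr:lemma:uniquebulge}) and, in case (b), that the diagonal $s=t$ meets $\mathcal C$ in exactly one relevant point with the lobe lying ``above-left'' of it; the $ad<-p_2/(q_2-p_2)$ subcase is the fussiest because of the extra diagonal intersection noted in Remark~\ref{zeroset}. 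The remaining ingredients — existence and uniqueness of $t_{sol}$, of $\tilde s$, of the diagonal root, and continuity/differentiability of $\mathcal O$ — are exactly Lemmas~\ref{rr:lemma:k0s}, \ref{rr:lemma:ad:notinI}, \ref{rr:zero:IFT}, \ref{rr:lemma:uniquebulge}, \ref{rr:direction:k}, \ref{rr:lemma:f}, \ref{rr:funiqueroot}, so the proof is mostly bookkeeping once the geometry of $\mathcal C$ relative to the quadrant corner and the diagonal is pinned down.
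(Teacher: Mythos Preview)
Your proposal is correct and follows essentially the same route as the paper: reduce to the geometry of the zero set $\mathcal C$ of $k$, identify $(t_{sol},s_{sol})=(\mathcal O(\tilde s),\tilde s)$ via Lemma~\ref{rr:lemma:uniquebulge}, split on whether the bulge crosses the diagonal, and then freeze $\lambda_1$ at the value $\lambda_0(\tau_{1,2})$ so that $f(\lambda_1,\tau_{1,2},\cdot)=k(\tau_{1,2},\cdot)$ and push to $t>\tau_{1,2}$ by the at-most-one-root-in-$t$ property of $f$. One wording wobble to fix: in the final paragraph you speak of ``the $t$-dependent optimal choice $\lambda_1=\lambda_0(t)$'', but the argument requires a \emph{single} $\lambda_1$ (independent of the signal), namely $\lambda_1=\lambda_0(\tau_{1,2})$; the upgrade from $k\le0$ on the vertical line $t=\tau_{1,2}$ to $f(\lambda_1,t,s)<0$ for $t>\tau_{1,2}$ comes solely from the monotonicity of $\|M_{D_1,D_2}^{-1}e^{J_2 s}M_{D_1,D_2}e^{J_1 t}\|$ in $t$, not from varying $\lambda_1$ with $t$.
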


\begin{proof}
	i) If $ad\in\mathcal{I}$, then the result follows by Propositions~\ref{rr:thm:ad=0,1} and~\ref{rr:thm:adinI}. \\~\\
	ii) If $ad>q_2/(q_2-p_2)$, then by~\eqref{findingmaxpt}, the unique nonzero solution of the system~\eqref{eq:2} is $(t_{sol},s_{sol})=(\mathcal{O}(\tilde{s}),\tilde{s})$.
	\begin{enumerate}
		\item[a)] For the last two cases in Remark~\ref{zeroset}(1), $\mathcal{O}(\tilde{s})\le \tilde{s}$. Choose $\lambda_1=\sqrt[8]{\frac{b^2d^2}{a^2c^2}\, {\rm{e}}^{-2(q_1-p_1)\mathcal{O}(\tilde{s})}}$. Then for all $s\ge 0$, $f(\lambda_1,\mathcal{O}(\widetilde{s}),s)=k(\mathcal{O}(\widetilde{s}),s)$. Thus, on the line $t=\mathcal{O}(\widetilde{s})$, the function $f(\lambda_1,\mathcal{O}(\widetilde{s}),s)\le 0$, for all $s\in\mathbb{R}$, where the equality holds only at $s=\widetilde{s}$. Since for each fixed $s$, there is at most one root $t$, $f(\lambda_1,t,s)< 0$, for all $s\in\mathbb{R}$ and $t>\mathcal{O}(\widetilde{s})$. Thus, there is no intersection between the zero set of $f(\lambda_1,t,s)$ and the region $\mathcal{R}=\left\{(t,s)\in\mathbb{R}^2\colon t,s> \mathcal{O}(\widetilde{s})\right\}$. Hence $f(\lambda_1,t,s)<0$ on $\mathcal{R}$ and the result follows. 
		\item[b)] For the first case of Remark~\ref{zeroset}(1), $\mathcal{O}(\tilde{s})> \tilde{s}$. Let $t_u$ be the unique positive root of $k(t,t)$, refer Lemma~\ref{rr:direction:k}. Taking $\lambda_1=\sqrt[8]{\frac{b^2d^2}{a^2c^2}\, {\rm{e}}^{-2(q_1-p_1)t_u}}$ and using the argument preceding Lemma~\ref{rr:lemma:f}, $f(\lambda_1,t,s)$ does not intersect with the region $\mathcal{R}=\left\{(t,s)\in\mathbb{R}^2\colon t,s> t_u\right\}$. Hence the result follows.\\
	\end{enumerate}\vspace{-10pt}
	iii) If $ad<-p_2/(q_2-p_2)$, the result follows using similar arguments as above.
\end{proof}

\subsection{Computing $\tau_{2,1}$}	
Here we obtain a result similar to Theorem~\ref{rr:thm12}. The roles of $J_1$ and $J_2$ are interchanged and the matrix $M_{D_1,D_2}$ is replaced by $M_{D_1,D_2}^{-1}$. More precisely, the vector $(a,b,c,d,p_1,q_1,p_2,q_2)$ is replaced by $(d,-b,-c,a,p_2,q_2,p_1,q_1)$ in the statement of Theorem~\ref{rr:thm12}. It is noteworthy that $\tau_{2,1}=0$ when $ad\in \left[-\dfrac{p_1}{q_1-p_1},\dfrac{q_1}{q_1-p_1}\right]$. Thus we have the following result.

\begin{proposition}\label{rr:dt0cond}
	The switched system~\eqref{eq:system} is stable for all signals $\sigma$ if 
	\[
	ad\in \left[-\dfrac{p_2/q_2}{1-p_2/q_2},\dfrac{1}{1-p_2/q_2}\right]\bigcup \left[-\dfrac{p_1/q_1}{1-p_1/q_1},\dfrac{1}{1-p_1/q_1}\right].
	\]
\end{proposition}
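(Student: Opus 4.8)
The plan is to recognize that the proposition is nothing more than the union of Theorem~\ref{rr:thm12}(i) with its $\tau_{2,1}$-counterpart (described in the subsection ``Computing $\tau_{2,1}$''), once the two intervals in the statement are rewritten in the notation already fixed in this section.

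First I would simplify the first interval. Clearing fractions gives $-\dfrac{p_2/q_2}{1-p_2/q_2}=-\dfrac{p_2}{q_2-p_2}$ and $\dfrac{1}{1-p_2/q_2}=\dfrac{q_2}{q_2-p_2}$, so this interval is exactly the set $\mathcal{I}$ introduced in Lemma~\ref{rr:lemma:k0s}. By Theorem~\ref{rr:thm12}(i), if $ad\in\mathcal{I}$ then $\tau_{1,2}=0$; consequently $S_{\tau_{1,2}}$ is the set of all admissible switching signals with $\sigma(0)=1$, and Remark~\ref{rmk:stab}(1) with $(\ell,k)=(1,2)$ gives the uniform bound $\|x(t)\|\le\eta_{1,2}^{(D_1,D_2)}\xi_{1,2}\|x(0)\|$, hence stability for every such $\sigma$. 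Since it suffices to assume $\sigma(0)=1$, this is stability for all signals.

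Second, the same fraction-clearing recasts the second interval as $\left[-\dfrac{p_1}{q_1-p_1},\dfrac{q_1}{q_1-p_1}\right]$, which is precisely the set flagged just before the statement on which the $\tau_{2,1}$-version of Theorem~\ref{rr:thm12}(i) yields $\tau_{2,1}=0$. That version is obtained from Theorem~\ref{rr:thm12} by the substitution $(a,b,c,d,p_1,q_1,p_2,q_2)\mapsto(d,-b,-c,a,p_2,q_2,p_1,q_1)$, under which the product $ad$ is unchanged (the new diagonal entries are $d$ and $a$; equivalently, $\det M=\pm1$ forces the diagonal product of $M^{-1}$ to equal that of $M$) and $\mathcal{I}$ transforms into the interval just displayed. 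So $ad$ in the second interval again gives $\tau_{2,1}=0$, and stability for all signals follows by the identical reasoning.

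Combining the two cases, if $ad$ lies in the displayed union then $\min\{\tau_{1,2},\tau_{2,1}\}=0$, and therefore the switched system~\eqref{eq:system} is stable for every signal $\sigma$, which is the assertion. I expect no real obstacle here: all the mathematical content sits in Theorem~\ref{rr:thm12}(i) and its mirror image, and the only things to verify are the two elementary identities recasting the fractional intervals as $\mathcal{I}$ and its analogue, together with the invariance of $ad$ under $M\mapsto M^{-1}$ — each a one-line computation.
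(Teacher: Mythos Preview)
Your proposal is correct and follows exactly the paper's own approach: the proposition is stated immediately after the observation that the substitution $(a,b,c,d,p_1,q_1,p_2,q_2)\mapsto(d,-b,-c,a,p_2,q_2,p_1,q_1)$ yields $\tau_{2,1}=0$ on the second interval, so the result is just the union of Theorem~\ref{rr:thm12}(i) with its $\tau_{2,1}$ counterpart. If anything, your write-up is more explicit than the paper's, which gives no separate proof beyond the preceding paragraph.
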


\section{Both subsystem matrices $A_1, A_2$ have complex eigenvalues}\label{sec:CC}
	
	Suppose $A_1$ and $A_2$ are planar Hurwitz matrices with both having a pair of complex conjugate eigenvalues. For $i=1,2$, let the Jordan form of $A_i$ be $J_i= \begin{pmatrix} -\alpha_i & \beta_i\\ -\beta_i & -\alpha_i\end{pmatrix}$, where $\alpha_i>0$ and $\beta_i\ne 0$. Since $\det(M)=\pm 1$, $D_1$ and $D_2$ are both identity.
	\subsection{Computing $\tau_{1,2}$} 
	In this section, we will compute $\tau_{1,2}\ge 0$ such that for all $t,s>\tau_{1,2}$, the matrix $K=M_{D_1,D_2}^{-1}{\rm{e}}^{J_2 s}\,M_{D_1,D_2}{\rm{e}}^{J_1 t}$ is Schur stable, see~\eqref{eq:tau}. This will ensure stability of the switched system~\eqref{eq:system} using~\eqref{eq:newform-12}. Recall that the feasible region is the set of values of $t$ for which $|\text{det}(K^TK)|<1$. It is straightforward to check that the first quadrant $Q_1$ is contained in the feasible region. Hence using Schur's stability, $\left\|M^{-1}{\rm{e}}^{J_2 s}\,M {\rm{e}}^{J_1 t} \right\|<1$ if and only if
		\[
		\frac{{\rm e}^{-2(\alpha_2 s+\alpha_1 t)}}{2}\left[\left(a^2+b^2+c^2+d^2\right)^2-\{\left(a^2+b^2+c^2+d^2\right)^2-4\}\cos 2\beta_2 s\right]<1+{\rm{e}}^{-4(\alpha_2 s+\alpha_1 t)},
	\]
	which can be simplified to Schur's function form $f(t,s)<0$, where $f(t,s)$ equals
		\[\left(a^2+b^2+c^2+d^2\right)^2-\left\{\left(a^2+b^2+c^2+d^2\right)^2-4 \right\}\cos 2\beta_2 s-4\cosh\,(2\alpha_1 t+2\alpha_2 s).
	\]
It should be noted that $\left(a^2+b^2+c^2+d^2\right)^2\ge 4$ with equality only when $M=\begin{pmatrix} a & b\\ -b & a\end{pmatrix}$ or $M=\begin{pmatrix} a & b\\ b & -a\end{pmatrix}$.
	
	\begin{remark}\label{cc:observation}
		Let $L(s)=\left(a^2+b^2+c^2+d^2\right)^2-\left\{\left(a^2+b^2+c^2+d^2\right)^2-4 \right\}\cos 2\beta_2 s$. The following observations can be made. 
		\begin{enumerate}
			\item $L(s)$ is a periodic function of period $\pi/\beta_2$.
			\item The zero set $f(t,s)=0$ satisfies: for each $s$, the value $t_s$ such that $f(t_s,s)=0$ is given by the formula \[
				t_s=\frac{-2\alpha_2 s+\cosh^{-1} L(s)}{2\alpha_1}=-\frac{\alpha_2}{\alpha_1}s+\frac{\cosh^{-1} L(s)}{2\alpha_1},
			\]
			where $\cosh^{-1}$ is assumed to return only positive values since we want to characterise the zero set in the first quadrant. Also, it is clear that for large enough $s$, no non-negative solution $t_s$ exists, since the second term in the expression of $t_s$ is bounded. Denote the zero set of $f(t,s)$ by $Z=\left\{(t_s,s)\colon\,s\in\mathbb{R}\right\}\cap Q_1$.
			\item Periodicity of $L$ gives $t_{s+\pi/\beta_2}=t_s-(\alpha_2/\alpha_1)(\pi/\beta_2)$.
			Thus, the zero set $Z$ shifts by $\alpha_2\pi/\alpha_1\beta_2$ units to the left for $\pi/\beta_2$ units distance covered along the $s-$axis.
		\end{enumerate}
	\end{remark}
	
	\begin{theorem}\label{cc:thm12}
		(With notations in this section) Let \begin{eqnarray}
			\tau_{1,2}&=&\frac{1}{\alpha_1+\alpha_2}\cosh^{-1}\left(\frac{a^2+b^2+c^2+d^2}{2}\right).
		\end{eqnarray} 
	Then the switched system~\eqref{eq:system} is stable for all $\sigma\in S_{\tau_{1,2}}$ and asymptotically stable for all signals $\sigma\in S_{\tau_{1,2}}'$.
	\end{theorem}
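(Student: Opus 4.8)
The plan is to push everything through the Schur's function displayed just before the theorem, namely $f(t,s)=L(s)-4\cosh(2\alpha_1 t+2\alpha_2 s)$ with $L(s)=(a^2+b^2+c^2+d^2)^2-\{(a^2+b^2+c^2+d^2)^2-4\}\cos 2\beta_2 s$, and to show that $f(t,s)<0$ for all $t,s>\tau_{1,2}$; by the paragraph preceding the theorem this is exactly the statement $\|M_{D_1,D_2}^{-1}{\rm{e}}^{J_2 s}M_{D_1,D_2}{\rm{e}}^{J_1 t}\|<1$ on that region, which is the first inequality of \eqref{eq:tau} (here $D_1=D_2$ is the identity and $Q_1$ is contained in the feasible region). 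Write $N=a^2+b^2+c^2+d^2$, so $N^2\ge 4$, hence $N/2\ge 1$ and $\tau_{1,2}$ is well defined. First I would record two elementary facts: (i) since $-1\le\cos 2\beta_2 s\le 1$ and $N^2-4\ge 0$, we have $4\le L(s)\le 2N^2-4$ for every $s$, the maximum being attained exactly when $\cos 2\beta_2 s=-1$; and (ii) by $\cosh 2x=2\cosh^2 x-1$ together with $\cosh\big(\cosh^{-1}(N/2)\big)=N/2$,
\[
4\cosh\big(2(\alpha_1+\alpha_2)\tau_{1,2}\big)=4\cosh\big(2\cosh^{-1}(N/2)\big)=4\big(2(N/2)^2-1\big)=2N^2-4 .
\]
Thus $\tau_{1,2}$ is calibrated precisely so that $4\cosh(2(\alpha_1+\alpha_2)\tau_{1,2})$ equals the \emph{global} maximum $\max_{s}L(s)$.

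The heart of the matter is then a one-line monotonicity estimate. For $t,s\ge\tau_{1,2}$ we have $2\alpha_1 t+2\alpha_2 s\ge 2(\alpha_1+\alpha_2)\tau_{1,2}\ge 0$, and since $\cosh$ is increasing on $[0,\infty)$, facts (ii) and (i) yield
\[
4\cosh(2\alpha_1 t+2\alpha_2 s)\ \ge\ 4\cosh\big(2(\alpha_1+\alpha_2)\tau_{1,2}\big)\ =\ 2N^2-4\ \ge\ L(s),
\]
so $f(t,s)\le 0$ on the closed region $\{t\ge\tau_{1,2},\,s\ge\tau_{1,2}\}$; moreover the first inequality in this chain is strict unless $t=s=\tau_{1,2}$, so $f(t,s)<0$ whenever $(t,s)\ne(\tau_{1,2},\tau_{1,2})$, and in particular the first inequality of \eqref{eq:tau} holds with the stated $\tau_{1,2}$. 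Consequently, for $\sigma\in S_{\tau_{1,2}}$ every factor in \eqref{eq:newform-12} satisfies $\|M_{D_1,D_2}^{-1}{\rm{e}}^{J_2 s_i}M_{D_1,D_2}{\rm{e}}^{J_1 t_i}\|\le 1$, so Remark~\ref{rmk:stab}(1) gives $\|x(t)\|\le\eta_{1,2}^{(D_1,D_2)}\xi_{1,2}\|x(0)\|$ for all $t>0$, i.e.\ stability on $S_{\tau_{1,2}}$.

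For the asymptotic-stability assertion on $S'_{\tau_{1,2}}$ I would simply verify the hypotheses of Remark~\ref{rmk:stab}(2). The displayed chain shows $\|M_{D_1,D_2}^{-1}{\rm{e}}^{J_2 s}M_{D_1,D_2}{\rm{e}}^{J_1 t}\|\le 1$ on $\{t,s\ge\tau_{1,2}\}$, and strict inequality holds at every point other than $(\tau_{1,2},\tau_{1,2})$ because $f<0$ there; thus the first inequality of \eqref{eq:tau} is non-strict on this region with equality possible only at $t=s=\tau_{1,2}$, which is exactly the situation described in Remark~\ref{rmk:stab}(2). That remark then delivers asymptotic stability of \eqref{eq:system} for every $\sigma\in S'_{\tau_{1,2}}$.

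There is no serious obstacle once the packaging is right; the argument is a calibration (fact (ii)) followed by a monotonicity inequality. The one point that genuinely needs attention is fact (ii): because $L$ is periodic (Remark~\ref{cc:observation}(1)) the estimate cannot be sharpened by restricting $s$ to large values, so it is really the global maximum $\max_{s}L(s)=2N^2-4$ — attained at the crests $\cos 2\beta_2 s=-1$ — that must be dominated, and $\tau_{1,2}$ in the statement is exactly the threshold at which $4\cosh(2(\alpha_1+\alpha_2)t)$ reaches that value at $t=\tau_{1,2}$. A secondary, essentially cosmetic point is the degenerate case $N=2$ (equivalently $\tau_{1,2}=0$, forcing $M$ to be a rotation or reflection up to scaling): there $L\equiv 4$, so $f(t,s)=4-4\cosh(2\alpha_1 t+2\alpha_2 s)<0$ throughout $Q_1$ and the system is stable for all $\sigma$, consistent with $\tau_{1,2}=0$.
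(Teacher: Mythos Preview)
Your proof is correct. Both your argument and the paper's hinge on the same inequality chain---for $t,s\ge\tau_{1,2}$ one has $4\cosh(2\alpha_1 t+2\alpha_2 s)\ge 4\cosh\big(2(\alpha_1+\alpha_2)\tau_{1,2}\big)=2N^2-4\ge L(s)$---but the packaging differs. The paper takes a geometric route: it parametrizes the zero set $Z$ as $t=t_s$, shows $Z$ lies below the line $\alpha_1 t+\alpha_2 s=\alpha_2 c_0$ by maximizing $s+(\alpha_1/\alpha_2)t_s$ over one period of $L$, and then observes that the square $\{t,s>\tau_{1,2}\}$ lies above this line. You instead bound $f$ directly, replacing the zero-set geometry and the periodicity argument by the single global estimate $L(s)\le 2N^2-4$ together with the double-angle identity for $\cosh$. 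Your route is shorter and more transparent for this particular theorem; the paper's line-bounding picture, on the other hand, is the template it reuses in the later sections (e.g.\ the curve $S(t)$ bounding the zero set in Theorems~\ref{rc:thm12} and~\ref{nc:thm12}), so it serves an expository purpose beyond this one result.
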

	
	\begin{proof}
		We will show that the zero set $Z$ of the function $f$ lies below the line $s=-(\alpha_1/\alpha_2)t+c_0$ where $c_0=\frac{1}{\alpha_2}\cosh^{-1}\left(\frac{a^2+b^2+c^2+d^2}{2}\right)$. To prove this, we first claim that the zero set $Z$ lies below the line $s=-(\alpha_1/\alpha_2)t+c_0$, where \[
			c_0=\max_{s\in\left[0,\frac{\pi}{\beta_2}\right]}s+\frac{\alpha_1}{\alpha_2}t_s.
		\]
		As a consequence of Remark~\ref{cc:observation}(1,2), the proof of the claim will follow if we show that $\left\{(t_s,s)\colon\,s\in\left[0,\pi/\beta_2\right]\right\}$ lies below the line $s=-(\alpha_1/\alpha_2)t+c_0$. Thus, we want to show that $t_s\le-(\alpha_2/\alpha_1)s+(\alpha_2/\alpha_1)c_0$ for all $s\in\left[0,\pi/\beta_2\right]$. Note that this inequality is true if and only if $s+(\alpha_1/\alpha_2)t_s\le c_0$ for all $s\in\left[0,\pi/\beta_2\right]$, which is clearly true by the choice of $c_0$ we made. Now, substituting the value of $t_s$ from Remark~\ref{cc:observation}, we have $s+(\alpha_1/\alpha_2)t_s=(1/2\alpha_2)\cosh^{-1} L(s)$ which attains its maximum value at $s_0=\pi/2\beta_2$. Thus, we get the value of $c_0$ equal to the one stated before.
		
		The zero set $Z$ lies below the line $s=-(\alpha_1/\alpha_2)t+c_0$ and does not intersect with the region 
		\[
			\mathcal{R}=\left\{(s,t)\in\mathbb{R}^2\colon\,t,s>\frac{c_0}{\frac{\alpha_1}{\alpha_2}+1}\right\},
		\] refer the figure on the right in Figure~\ref{fig:cc}. Also note that $f(t,s)$ is negative on $\mathcal{R}$ due to continuity and it being negative for large values of $t$ and $s$. Hence
		\[
			\tau_{1,2}=\frac{\alpha_2c_0}{\alpha_1+\alpha_2}=\frac{1}{\alpha_1+\alpha_2}\cosh^{-1}\left(\frac{a^2+b^2+c^2+d^2}{2}\right).
		\]
		\begin{figure}[h!]
			\centering
			\includegraphics[scale=0.45]{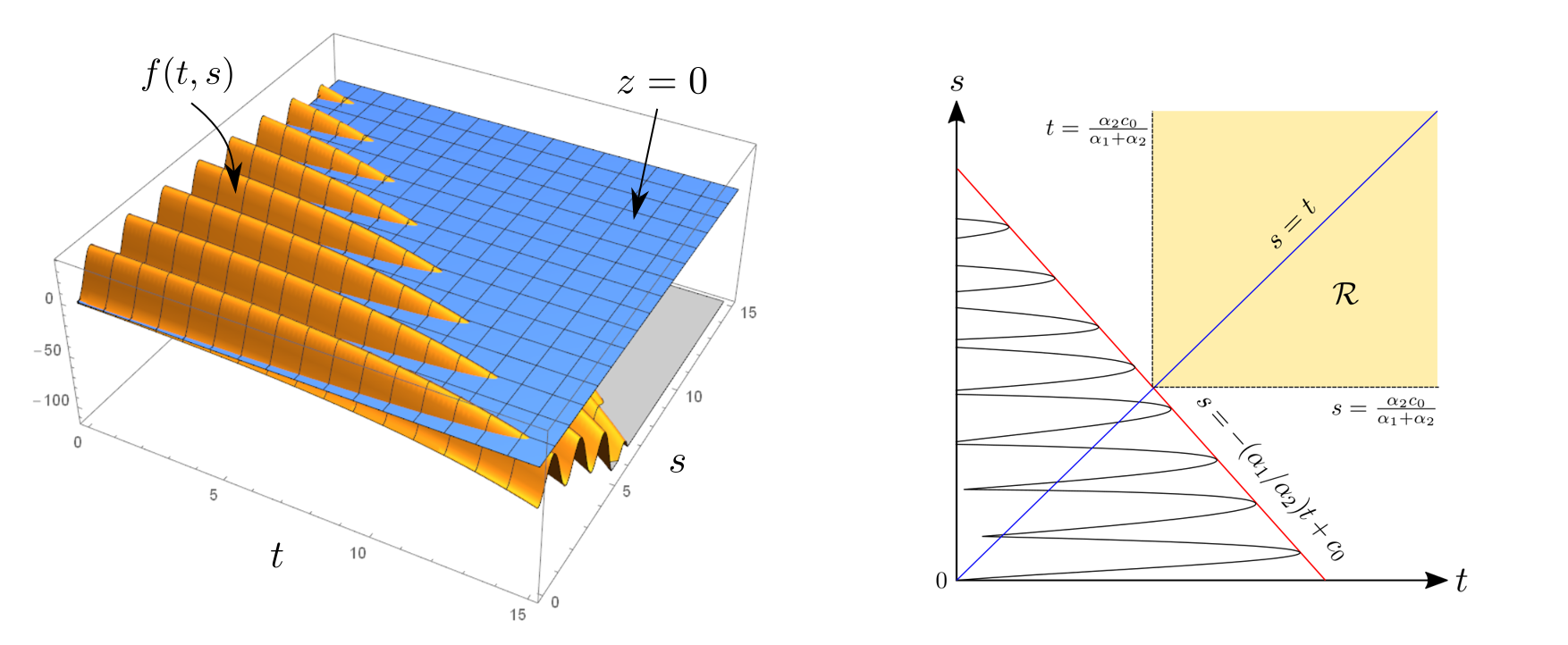}\caption{The zero set of $f$ lies below $s=-(\alpha_1/\alpha_2)t+c_0$. The left graph corresponds to the values $\alpha_1=\alpha_2=0.1$, $\beta_1=2.75$, $\beta_2=1.7$, $a=0.3\sqrt{6}$, $b=0.4\sqrt{6}$, $c=\sqrt{8/3}$ and $d=\sqrt{2/3}$.}\label{fig:cc}
		\end{figure}
	\end{proof}
	
	\subsection{Computing $\tau_{2,1}$} 
	
	Considering the matrix $M {\rm{e}}^{J_1 t} \,M^{-1}{\rm{e}}^{J_2 s}$, we get the following result. 
	
	\begin{theorem}\label{cc:thm21}
		(With notations in this section) Let $\tau_{2,1}=\tau_{1,2}$. The switched system~\eqref{eq:system} is stable for all $\sigma\in S_{\tau_{2,1}}$ and asymptotically stable for all signals $\sigma\in S_{\tau_{2,1}}'$.
	\end{theorem}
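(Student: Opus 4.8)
The plan is to mirror the argument used for Theorem~\ref{cc:thm12}, exploiting the symmetry between the two groupings of the flow. Recall from the discussion preceding~\eqref{eq:newform-21} that the terms in the flow~\eqref{eq:flow} can be regrouped so that controlling $\left\|M_{D_1,D_2}{\rm{e}}^{J_1 t}\,M_{D_1,D_2}^{-1}{\rm{e}}^{J_2 s}\right\|<1$ for $t,s>\tau_{2,1}$ suffices for stability, and the analogous non-strict inequality yields asymptotic stability on $S'_{\tau_{2,1}}$ via Remark~\ref{rmk:stab}. So the task reduces to computing a valid $\tau_{2,1}$ for the matrix $K'=M_{D_1,D_2}{\rm{e}}^{J_1 t}\,M_{D_1,D_2}^{-1}{\rm{e}}^{J_2 s}$.

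First I would observe that passing from $K=M_{D_1,D_2}^{-1}{\rm{e}}^{J_2 s}\,M_{D_1,D_2}{\rm{e}}^{J_1 t}$ to $K'$ amounts to replacing $M_{D_1,D_2}$ by $M_{D_1,D_2}^{-1}$ and swapping the roles of $J_1$ and $J_2$ (equivalently, $(\alpha_1,\beta_1)\leftrightarrow(\alpha_2,\beta_2)$). Since $D_1=D_2=I$ in the complex–complex case, $M_{D_1,D_2}=M$, and $\det(M)=\pm 1$ forces the adjugate to have the same entrywise structure: writing $M=\begin{pmatrix}a&b\\c&d\end{pmatrix}$ with $ad-bc=\pm1$, the inverse is $\pm\begin{pmatrix}d&-b\\-c&a\end{pmatrix}$. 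Hence $a^2+b^2+c^2+d^2$ is invariant under $M\mapsto M^{-1}$. Running the Schur-stability computation for $K'$ exactly as in Section~\ref{sec:CC} — $\|K'\|<1$ iff $(K')^\top K'$ is Schur stable, which after the same trigonometric simplification becomes a Schur's function $g(t,s)<0$ — produces the same expression but with $\beta_2 s$ replaced by $\beta_1 t$ inside the cosine and with $\cosh(2\alpha_1 t+2\alpha_2 s)$ unchanged (it is symmetric in the two summands). The key point is that the quantity $a^2+b^2+c^2+d^2$ appearing in the bound does not change.

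Next I would repeat the geometric argument from the proof of Theorem~\ref{cc:thm12} verbatim: the zero set $Z'$ of $g$ is a union of translates (now periodic in $t$ with period $\pi/\beta_1$) lying below the line $\alpha_1 t+\alpha_2 s=\tfrac12\max_{u}\cosh^{-1}L(u)$, where $L$ now involves $\cos 2\beta_1 t$ but has the same amplitude $(a^2+b^2+c^2+d^2)^2$. Its maximum over one period is still $(a^2+b^2+c^2+d^2)^2$, attained where the cosine equals $-1$, so $\max_u \cosh^{-1}L(u)=\cosh^{-1}\!\big(\tfrac{a^2+b^2+c^2+d^2}{2}\big)\cdot 2$; intersecting the half-plane below this line with $Q_1$ and taking the corner point along $t=s$ gives precisely
\[
\tau_{2,1}=\frac{1}{\alpha_1+\alpha_2}\cosh^{-1}\!\left(\frac{a^2+b^2+c^2+d^2}{2}\right)=\tau_{1,2}.
\]
On the region $\{t,s>\tau_{2,1}\}$ the function $g$ is negative (by continuity and its negativity for large $t,s$), so $\|K'\|<1$ there, and the equality case is confined to $t=s=\tau_{2,1}$, giving asymptotic stability on $S'_{\tau_{2,1}}$.

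I do not expect a genuine obstacle here; the only point requiring a moment's care is checking that $a^2+b^2+c^2+d^2$ — and hence the whole Schur's function up to the $\beta_2 s\to\beta_1 t$ relabelling — is genuinely invariant under replacing $M$ by $M^{-1}$, which follows from $\det M=\pm1$ as noted above, together with verifying that $Q_1$ still lies in the feasible region $|\det((K')^\top K')|<1$ (immediate since $\det(K')^2={\rm e}^{-4(\alpha_1 t+\alpha_2 s)}<1$ on $Q_1$). Everything else is a line-by-line transcription of the proof of Theorem~\ref{cc:thm12} with the indices $1$ and $2$ interchanged.
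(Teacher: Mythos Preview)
Your proposal is correct and follows exactly the approach the paper intends: the paper gives no proof beyond the sentence ``Considering the matrix $M{\rm e}^{J_1 t}M^{-1}{\rm e}^{J_2 s}$, we get the following result,'' and your argument---replace $M$ by $M^{-1}$, swap $(\alpha_1,\beta_1)\leftrightarrow(\alpha_2,\beta_2)$ and $t\leftrightarrow s$, observe that $a^2+b^2+c^2+d^2$ is invariant since $\det M=\pm1$, and rerun the geometry of Theorem~\ref{cc:thm12}---is precisely the intended filling-in. One small slip: the maximum of $L$ over a period is $2(a^2+b^2+c^2+d^2)^2-4$, not $(a^2+b^2+c^2+d^2)^2$, but this does not affect the outcome because the double-angle identity $\cosh(2x)=2\cosh^2 x-1$ still collapses it to $\tau_{2,1}=\frac{1}{\alpha_1+\alpha_2}\cosh^{-1}\!\big(\tfrac{a^2+b^2+c^2+d^2}{2}\big)=\tau_{1,2}$.
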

		
	\section{$A_1$ is real diagonalizable and $A_2$ has complex eigenvalues}\label{sec:RC}
	
	Suppose $A_1$ is a real diagonalizable matrix with canonical form $J_1=\begin{pmatrix} -p_1 & 0\\ 0 & -q_1\end{pmatrix}$ where $0<p_1<q_1$ and $A_2$ has complex eigenvalues with canonical form $J_2= \begin{pmatrix} -\alpha_2 & \beta_2\\ -\beta_2 & -\alpha_2\end{pmatrix}$ where $\alpha_2>0$ and $\beta_2\ne 0$. Choose the matrices $P_1$ and $P_2$ such that determinant of the transition matrix $M=P_2^{-1}P_1$ has determinant $1$. We vary $D_1$ over matrices of the form $\text{diag}(\lambda_1,1/\lambda_1)$; and $D_2$ is identity.
	
	\subsection{Computing $\tau_{1,2}$} 
	
	\begin{theorem}\label{rc:thm12}
		(With notations in this section) Let $\tau_{1,2}$ be  the unique fixed point of \[
		S(t)=-\left(\frac{q_1+p_1}{2\alpha_2}\right)t+\frac{1}{\alpha_2}\sinh^{-1}\left(\lvert ab+cd\rvert\cosh\left(\frac{q_1-p_1}{2}\right)t+\sinh\left(\frac{q_1-p_1}{2}\right)t\right).
		\] 
	Then the switched system~\eqref{eq:system} is asymptotically stable for all $\sigma\in S_{\tau_{1,2}}$.
	\end{theorem}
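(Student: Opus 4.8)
The plan is to follow the template of Sections~\ref{sec:RR} and~\ref{sec:CC}: express $\|K\|<1$, where $K=M_{D_1,D_2}^{-1}{\rm e}^{J_2 s}M_{D_1,D_2}{\rm e}^{J_1 t}$, through Schur's conditions, optimise over the single free scaling parameter $\lambda_1$, analyse the zero set of the resulting envelope, and then freeze $\lambda_1$ at a value making the estimate tight along the critical slice. Here $D_2=I$ is forced (the eigenvalues of $A_2$ are complex) and $D_1=\mathrm{diag}(\lambda_1,1/\lambda_1)$, so $N:=M_{D_1,D_2}=M\,\mathrm{diag}(\lambda_1,1/\lambda_1)$ has $\det N=\det M=1$, and ${\rm e}^{J_2 s}={\rm e}^{-\alpha_2 s}R(\beta_2 s)$ with $R(\beta_2 s)$ the rotation through angle $\beta_2 s$. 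Then $\det(K^\top K)=(\det K)^2={\rm e}^{-2(p_1+q_1)t-4\alpha_2 s}<1$ on $Q_1$, so $Q_1$ lies in the feasible region and $\|K\|<1$ is equivalent to $f(\lambda_1,t,s):=\mathrm{tr}(K^\top K)-1-\det(K^\top K)<0$. Writing $\varphi:=ab+cd$, $u:=a^2+c^2$, $v:=b^2+d^2$ and computing the diagonal entries of $G^\top G$ for $G=N^{-1}R(\beta_2 s)N$, one finds $\mathrm{tr}(K^\top K)={\rm e}^{-2\alpha_2 s}\big({\rm e}^{-2p_1 t}\,g_1+{\rm e}^{-2q_1 t}\,g_2\big)$ with $g_1=(\cos\beta_2 s+\varphi\sin\beta_2 s)^2+u^2\lambda_1^4\sin^2\beta_2 s$ and $g_2=(\cos\beta_2 s-\varphi\sin\beta_2 s)^2+v^2\lambda_1^{-4}\sin^2\beta_2 s$; the identity $uv=1+\varphi^2$, i.e. $(ad-bc)^2=1$ rewritten, will be used repeatedly.

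The only $\lambda_1$-dependence in $f$ is in ${\rm e}^{-2\alpha_2 s}\sin^2\beta_2 s\,\big(u^2\lambda_1^4{\rm e}^{-2p_1 t}+v^2\lambda_1^{-4}{\rm e}^{-2q_1 t}\big)$; minimising by AM--GM at $\lambda_1^4=(v/u){\rm e}^{-(q_1-p_1)t}$ and using $uv=1+\varphi^2$ collapses $f$ to the envelope
\[
k(t,s):={\rm e}^{-2\alpha_2 s}\big[({\rm e}^{-p_1 t}-{\rm e}^{-q_1 t})\cos\beta_2 s+\varphi({\rm e}^{-p_1 t}+{\rm e}^{-q_1 t})\sin\beta_2 s\big]^2-\big(1-{\rm e}^{-(p_1+q_1)t-2\alpha_2 s}\big)^2,
\]
which satisfies $k(t,s)\le f(\lambda_1,t,s)$ for every $\lambda_1$ and all $t,s\ge0$. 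Since $1-{\rm e}^{-(p_1+q_1)t-2\alpha_2 s}>0$ on $Q_1$, the condition $k(t,s)<0$ is equivalent to ${\rm e}^{-\alpha_2 s}\,\big|({\rm e}^{-p_1 t}-{\rm e}^{-q_1 t})\cos\beta_2 s+\varphi({\rm e}^{-p_1 t}+{\rm e}^{-q_1 t})\sin\beta_2 s\big|<1-{\rm e}^{-(p_1+q_1)t-2\alpha_2 s}$. Using ${\rm e}^{-p_1 t}-{\rm e}^{-q_1 t}=2{\rm e}^{-(p_1+q_1)t/2}\sinh\!\big(\tfrac{q_1-p_1}{2}t\big)$, ${\rm e}^{-p_1 t}+{\rm e}^{-q_1 t}=2{\rm e}^{-(p_1+q_1)t/2}\cosh\!\big(\tfrac{q_1-p_1}{2}t\big)$, and the estimate $|x\cos\beta_2 s+y\sin\beta_2 s|\le|x||\cos\beta_2 s|+|y||\sin\beta_2 s|\le|x|+|y|$ — the last inequality being strict whenever $xy\ne0$, since then $|\cos\beta_2 s|=|\sin\beta_2 s|=1$ is impossible — we conclude that $2{\rm e}^{-(p_1+q_1)t/2-\alpha_2 s}B(t)\le1-{\rm e}^{-(p_1+q_1)t-2\alpha_2 s}$ implies $k(t,s)<0$, where $B(t)=|ab+cd|\cosh\!\big(\tfrac{q_1-p_1}{2}t\big)+\sinh\!\big(\tfrac{q_1-p_1}{2}t\big)$.

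Reading this last inequality as a quadratic in $z={\rm e}^{-(p_1+q_1)t/2-\alpha_2 s}$ gives $z\le\sqrt{B(t)^2+1}-B(t)={\rm e}^{-\sinh^{-1}B(t)}$, i.e. $s\ge S(t)$ with $S$ exactly the function in the statement; hence $s\ge S(t)\Rightarrow k(t,s)<0$. That $S$ has a unique fixed point $\tau_{1,2}\ge0$ follows since $S$ is strictly decreasing: with $\delta=\tfrac{q_1-p_1}{2}t$ one computes $\tfrac{d}{dt}\sinh^{-1}B(t)=\tfrac{q_1-p_1}{2}\,(\cosh\delta+|ab+cd|\sinh\delta)/\sqrt{B(t)^2+1}$, and the identity $B(t)^2+1-(\cosh\delta+|ab+cd|\sinh\delta)^2=(ab+cd)^2\ge0$ forces this ratio to be $\le1$, so $S'(t)\le-p_1/\alpha_2<0$; together with $S(0)=\tfrac{1}{\alpha_2}\sinh^{-1}|ab+cd|\ge0$ and $S(t)\to-\infty$ this gives the claim ($\tau_{1,2}=0$ exactly when $ab+cd=0$).

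Finally, to return from the envelope to a fixed scaling, take $\lambda_1$ with $\lambda_1^4=(v/u){\rm e}^{-(q_1-p_1)\tau_{1,2}}$, so that $f(\lambda_1,\tau_{1,2},s)=k(\tau_{1,2},s)$ for all $s$. As $S$ is decreasing with $S(\tau_{1,2})=\tau_{1,2}$, every $(t,s)$ with $t\ge\tau_{1,2}$, $s\ge\tau_{1,2}$ satisfies $s\ge\tau_{1,2}\ge S(t)$, hence $k(t,s)<0$; in particular $f(\lambda_1,\tau_{1,2},s)<0$ for all $s\ge\tau_{1,2}$, and then the strict monotonicity in $t$ of $\|M_{D_1,D_2}^{-1}{\rm e}^{J_2 s}M_{D_1,D_2}{\rm e}^{J_1 t}\|$ (used already in Section~\ref{sec:RR}, valid here because $J_1$ is diagonal) gives $f(\lambda_1,t,s)<0$ on the whole closed region $\{t,s\ge\tau_{1,2}\}$. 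Since this norm is continuous there and tends to $0$ as $t+s\to\infty$, its supremum $\rho$ over $\{t,s\ge\tau_{1,2}\}$ satisfies $\rho<1$, so~\eqref{eq:newform-12} gives $\|x(t)\|\le\eta_{1,2}^{(D_1,D_2)}\xi_{1,2}\,\rho^{\,j}\|x(0)\|\to0$ for every $\sigma\in S_{\tau_{1,2}}$, which is the asymptotic stability claimed. The step I expect to be the main obstacle is exactly this passage from the $\lambda_1$-envelope bound $k<0$ to a bound on $f(\lambda_1,\cdot,\cdot)$ for one fixed $\lambda_1$, handled by choosing $\lambda_1$ optimal at $t=\tau_{1,2}$ and exploiting monotonicity in $t$; note that the strictness of the triangle-inequality estimate is precisely what upgrades the conclusion from Lyapunov stability to asymptotic stability on $S_{\tau_{1,2}}$. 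The two computations that require care are the collapse to the clean form of $k$ (resting on $uv=1+(ab+cd)^2$) and the monotonicity of $S$ (resting on $B^2+1-(B')^2=(ab+cd)^2$).
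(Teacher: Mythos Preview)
Your proof is correct and follows essentially the same route as the paper's. The only cosmetic difference is how the bounding curve $s=S(t)$ is obtained: the paper factors the zero set of $k$ through the functions $\ell_\pm$ and then estimates $\max\{\ell_+,\ell_-\}$ using $|\sin\beta_2 s|\le 1$ and the sign of $\cos\beta_2 s$, whereas you write $k$ as a difference of squares and apply $|x\cos\beta_2 s+y\sin\beta_2 s|\le|x|+|y|$ directly; both yield exactly the inequality $|ab+cd|\cosh\delta+\sinh\delta\le\sinh\big(\alpha_2 s+\tfrac{p_1+q_1}{2}t\big)$ and hence the same $S$. The remaining steps---optimising $\lambda_1$ via AM--GM at $\lambda_1^4=(b^2+d^2)/(a^2+c^2)\,{\rm e}^{-(q_1-p_1)t}$, freezing $\lambda_1$ at $t=\tau_{1,2}$, and using monotonicity of the norm in $t$---coincide with the paper's argument.
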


\textit{Note}: Unlike other results for computing $\tau_{1,2}$ and $\tau_{2,1}$ where the switched system~\eqref{eq:system} is stable for all $\sigma\in S_{\tau_{1,2}}$ and asymptotically stable for all $\sigma\in S_{\tau_{1,2}}'$, in the above result, the switched system is asymptotically stable for all $\sigma\in S_{\tau_{1,2}}$. This is because the curve given by the function $S$ which bounds the zero set $\mathcal{C}$ of $k$ is strictly above the set $\mathcal{C}$ unless $p_1=q_1$.
	
	\begin{proof}
		We have $\left\|M_{D_1,D_2}^{-1}{\rm{e}}^{J_2 s}\, M_{D_1,D_2} {\rm{e}}^{J_1 t} \right\|<1$ if and only if $f(\lambda_1,t,s)<0$ where $f(\lambda_1,t,s)$ equals \begin{align*}
			 \frac{1}{2}\left(\frac{(b^2+d^2)^2}{\lambda_1^4}{\rm{e}}^{-(q_1-p_1) t}+(a^2+c^2)^2\lambda_1^4 {\rm{e}}^{(q_1-p_1)t}\right)\sin^2\beta_2 s+R \sinh (q_1-p_1) t\, \sin 2\beta_2 s\\
		 +\left(\cos^2\beta_2 s+R^2\sin^2\beta_2 s\right)\, \cosh (q_1-p_1) t-\cosh\left((q_1+p_1) t+2\alpha_2 s\right),
		\end{align*}
		where $R=ab+cd$. Define $k(t,s)=f\left(\sqrt[4]{\frac{b^2+d^2}{a^2+c^2}\, {\rm{e}}^{-(q_1-p_1)t}},t,s\right)$. Then,
		\begin{eqnarray}\label{eq:rc:opt}
		k(t,s)&=&(R^2+1) \sin^2 \beta_2 s+R \sinh (q_1-p_1) t\, \sin 2\beta_2 s\nonumber \\
		& &+\left(\cos^2\beta_2 s+R^2\sin^2\beta_2 s\right)\, \cosh (q_1-p_1) t-\cosh\left((q_1+p_1) t+2\alpha_2 s\right).
		\end{eqnarray}
	
		Let $\mathcal{C}=\{(t,s)\in Q_1\colon\ k(t,s)=0\}$. Let $\Lambda^0=(\pi/\beta_2)\mathbb{N}$. For all $s_0\in\Lambda^0$ and $t>0$, $k(t,s_0)<0$. Thus, the zero set $\mathcal{C}$ can be rewritten as $\mathcal{C}=\{(t,s)\in Q_1\colon\ R=\ell_\pm(t,s)\}$ where \begin{eqnarray*}
			\ell_\pm(t,s)&=&\frac{-\cos\beta_2 s\sinh\left(\frac{q_1-p_1}{2}\right)t\pm\sinh\left(\alpha_2 s+\left(\frac{q_1+p_1}{2}\right)t\right)}{\sin\beta_2 s\cosh\left(\frac{q_1-p_1}{2}\right)t},\ s\notin \Lambda^0.
		\end{eqnarray*}
		The following observations can be made about the functions $\ell_\pm$, refer Figure~\ref{fig:rc12}. Let $\Lambda^1=\cup_{k\in\mathbb{N}}\left((2k-2)\pi/\beta_2,(2k-1)\pi/\beta_2\right)$ and $\Lambda^2=\cup_{k\in\mathbb{N}}\left((2k-1)\pi/\beta_2,2k\pi/\beta_2\right)$\begin{enumerate}
			\item When $s_0\in \Lambda^1$,\begin{enumerate}
				\item $\ell_+(t,s_0)$ is positive and increasing in $t$, and
				\item $\ell_-(t,s_0)$ is negative and decreasing in $t$.
			\end{enumerate}
		\item When $s_0\in \Lambda^2$,\begin{enumerate}
			\item $\ell_+(t,s_0)$ is negative and decreasing in $t$, and
			\item $\ell_-(t,s_0)$ is positive and increasing in $t$.
		\end{enumerate}
		\end{enumerate}
	
			\begin{figure}[h!]
		\centering
		\captionsetup{justification=centering}
		\includegraphics[scale=0.7]{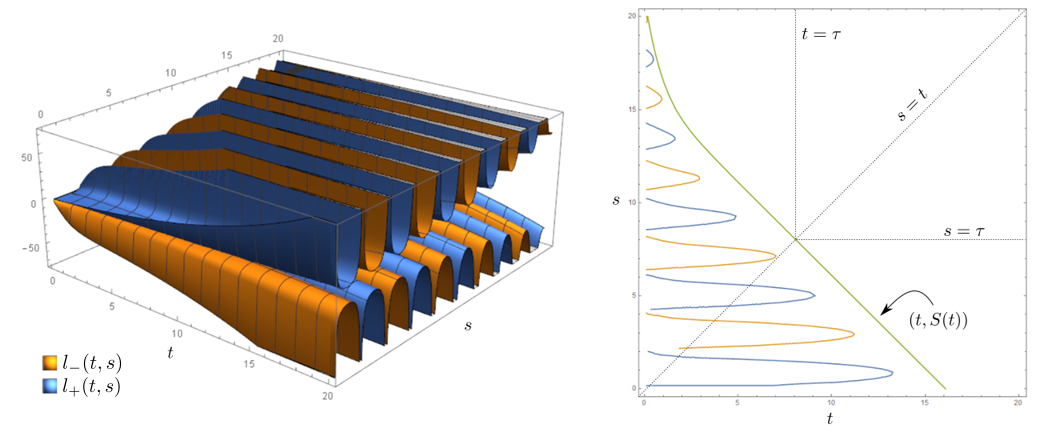}\caption{Plots of $\ell_\pm(t,s)$ and $\mathcal{C}$ when $R>0$. Graphs correspond to the values $\alpha_2=0.1$, $\beta_2=1.5$, $p=0.1$, $q=1.2$ and $R=4$.}\label{fig:rc12}
	\end{figure}

	We will now work towards finding a curve lying above $\mathcal{C}$ as in the proof of Theorem~\ref{cc:thm12}. This curve, in this case, will not be a straight line and will help us find an expression for $\tau_{1,2}$.
	 
	We find this curve for the case when $R\ge 0$, the other case will follow similarly. From the above observations, it is clear that for a given positive $s_0\notin \Lambda^0$ and $t>0$, $\ell_+(t,s_0)$ and $\ell_-(t,s_0)$ have opposite signs. Thus, we have $\mathcal{C}=\left\{(t,s)\in Q_1\colon\ \max\{\ell_+(t,s),\ell_-(t,s)\}=R \right\}$. Moreover $R<\max\{\ell_+(t,s),\ell_-(t,s)\}$ if and only if $k(t,s)<0$. Now,
	\begin{eqnarray*}
		\max\{\ell_+(t,s),\ell_-(t,s)\}&=& 
	\left\{\begin{aligned}
		& \frac{\sinh\left(\alpha_2 s+\left(\frac{q_1+p_1}{2}\right)t\right)-\cos\beta_2 s\sinh\left(\frac{q_1-p_1}{2}\right)t}{\lvert\sin\beta_2 s\rvert\cosh\left(\frac{q_1-p_1}{2}\right)t}, & s & \in\Lambda^1\\
		& \frac{\sinh\left(\alpha_2 s+\left(\frac{q_1+p_1}{2}\right)t\right)+\cos\beta_2 s\sinh\left(\frac{q_1-p_1}{2}\right)t}{\lvert\sin\beta_2 s\rvert\cosh\left(\frac{q_1-p_1}{2}\right)t}, & s & \in\Lambda^2
	\end{aligned}\right.\\
		&\ge& \frac{\sinh\left(\alpha_2 s+\left(\frac{q_1+p_1}{2}\right)t\right)-\sinh\left(\frac{q_1-p_1}{2}\right)t}{\cosh\left(\frac{q_1-p_1}{2}\right)t}.
	\end{eqnarray*}
	The last inequality follows since $\cos\beta_2s$ takes all values in the interval $(-1,1)$ and $\lvert\sin\beta_2 s\rvert$ takes all the values in the interval $(0,1]$, for $s\in\Lambda^1\cup \Lambda^2$. Moreover, the inequality is strict for $t\ne 0$ since $q_1\ne p_1$.\\	
	Hence $\mathcal{C}$ lies below the curve 
	\[
	R\cosh\left(\frac{q_1-p_1}{2}\right)t+\sinh\left(\frac{q_1-p_1}{2}\right)t=\sinh\left(\alpha_2 s+\left(\frac{q_1+p_1}{2}\right)t\right).
	\]
	
	When $R<0$, $k(t,s)<0$ if and only if $R>\min \{\ell_+(t,s),\ell_-(t,s)\}$. Hence it can be shown $\mathcal{C}$ lies below the curve
	\[
	\lvert R\rvert\cosh\left(\frac{q_1-p_1}{2}\right)t+\sinh\left(\frac{q_1-p_1}{2}\right)t=\sinh\left(\alpha_2 s+\left(\frac{q_1+p_1}{2}\right)t\right).
	\]
	Thus, for a general $R$, the equation of the curve bounding $\mathcal{C}$ obtained above is given by \[
	S(t)=-\left(\frac{q_1+p_1}{2\alpha_2}\right)t+\frac{1}{\alpha_2}\sinh^{-1}\left(\lvert R\rvert\cosh\left(\frac{q_1-p_1}{2}\right)t+\sinh\left(\frac{q_1-p_1}{2}\right)t\right).
	\]
 	A straightforward calculation shows that $S$ is a decreasing and concave up function. Thus, if $\tau$ is the unique fixed point of $S$, $k(t,s)\le 0$ for all $t,s\ge \tau$ with the possible equality only at $(\tau,\tau)$. Moreover choosing $\lambda_1=\sqrt[4]{\frac{b^2+d^2}{a^2+c^2}\, {\rm{e}}^{-(q_1-p_1)\tau}}$, as in the proof of Theorem~\ref{rr:thm12}, we have $f(\lambda_1,t,s)<0$ in the region $\mathcal{R}_\tau=\{(t,s)\in Q_1\colon\ t,s\ge \tau\}$. This implies that for some choice $D_1$ corresponding to $\lambda_1$, we have $\left\|M_{D_1,D_2}^{-1}{\rm{e}}^{J_2 s}\, M_{D_1,D_2} {\rm{e}}^{J_1 t} \right\|<\rho$ for all $t,s\ge \tau$, for some $0<\rho<1$. Hence, the result.
\end{proof}

	\subsection{Computing $\tau_{2,1}$}\label{sub:rc21}
	
	We have $\left\|M_{D_1,D_2} {\rm{e}}^{J_1 t}\, M_{D_1,D_2}^{-1}{\rm{e}}^{J_2 s} \right\|<1$ if and only if $f(t,s)<0$, where
	\begin{eqnarray*}
		f(t,s)=\left(a^2+c^2\right)\left(b^2+d^2\right)\sinh^2\left(\frac{q_1-p_1}{2}\right)t-\sinh^2\left(\left(\frac{q_1+p_1}{2}\right)t+\alpha_2 s\right)<0.
	\end{eqnarray*}
	
	\begin{remark}\label{rem:sec5}
		The following observations can be made. Let $K=\left(a^2+c^2\right)\left(b^2+d^2\right)$, which is at least 1. 
		\begin{enumerate}
			\item For $s=0$, $f(t,0)<0$ if and only if \begin{eqnarray*}
				\sqrt{K}&<&\frac{\sinh\left(\left(q_1+p_1\right)t/2\right)}{\sinh\left(\left(q_1-p_1\right)t/2\right)},\ t\ne 0.
			\end{eqnarray*}
			Since the function on the right attains the minimum value $(q_1+p_1)/(q_1-p_1)$ at $t=0$, and is increasing in $t$, the function $f(t,0)$ has at most two roots. Firstly $t=0$ is a root. If $K\le (q_1+p_1)/(q_1-p_1)$, $f(t,0)<0$ for all $t>0$. Else if $K> (q_1+p_1)/(q_1-p_1)$, there exists a unique $\tilde{t}>0$ such that $f(\tilde{t},0)=0$.
			\item For each $t$, there is a unique $s_t$ such that $f(t,s_t)=0$ with \begin{eqnarray}\label{zerosetcr}
				\alpha_2\, s_t=-\left(\frac{q_1+p_1}{2}\right)t+\left|\sinh^{-1}\left(\sqrt{K}\sinh\left(\frac{q_1-p_1}{2}\right)t\right)\right|.
			\end{eqnarray}
			Since $f_s(t,s)<0$, the existence of $s_t\ge 0$ for each $t\in[0,\tilde{t}\,]$ is guaranteed. 
		\end{enumerate}
	\end{remark}
	
	\begin{theorem}\label{rc:thm21}
		(With notations in this section) \\
		i) If $1\le K\le\left((q_1+p_1)/(q_1-p_1)\right)^2$, the switched system~\eqref{eq:system} is stable for all signals $\sigma$. Let $\tau_{2,1}=0$. \\
		ii) If $K>\left((q_1+p_1)/(q_1-p_1)\right)^2$, let $t=t_0\ge 0$ be the unique solution of 
		\[
			\tanh^2\left(\frac{q_1-p_1}{2}\right)t=\frac{1}{K-1}\left(K\left(\frac{q_1-p_1}{q_1+p_1}\right)^2-1 \right),
		\]
		and		
		\[
			S(t_0)=-\left(\frac{q_1+p_1}{2\alpha_2}\right)t_0+\frac{1}{\alpha_2}\sinh^{-1}\sqrt{\frac{K(q_1-p_1)^2-(q_1+p_1)^2}{(q_1+p_1)^2-(q_1-p_1)^2}}.
		\]
		\begin{enumerate}[(a)]
			\item If $S(t_0)\le t_0$, let $\tau_{2,1}=S(t_0)$.
			\item If $S(t_0)> t_0$, let $\tau_{2,1}$ be the unique positive solution $t$ of \[
			K\sinh^2\left(\frac{q_1-p_1}{2}\right)t=\sinh^2\left(\frac{q_1+p_1+2\alpha_2}{2}\right)t.
			\]
		\end{enumerate} 
		Then the switched system~\eqref{eq:system} is stable for all $\sigma\in S_{\tau_{2,1}}$ and asymptotically stable for all $\sigma\in S_{\tau_{2,1}}'$.
	\end{theorem}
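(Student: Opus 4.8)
The plan is to locate the zero set of $f$ in the open first quadrant $Q_1$ and then find the smallest ``corner'' $\mathcal{R}_\tau=\{(t,s)\colon t,s>\tau\}$ that misses it, exactly as in the proofs of Theorems~\ref{rr:thm12} and~\ref{cc:thm12}. By Remark~\ref{rem:sec5}, for each fixed $t$ the map $s\mapsto f(t,s)$ is strictly decreasing and vanishes at the value $s_t$ of~\eqref{zerosetcr}; writing
\[
S(t)=-\frac{q_1+p_1}{2\alpha_2}\,t+\frac{1}{\alpha_2}\sinh^{-1}\!\left(\sqrt{K}\,\sinh\!\left(\tfrac{q_1-p_1}{2}t\right)\right),
\]
one checks $s_t=S(t)$ and that $f(t,0)<0$ exactly when $S(t)<0$. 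Hence inside $Q_1$ one has $f(t,s)<0$ if and only if $s>S(t)$, so $\{f=0\}\cap Q_1$ is the graph of $S$ over the interval $[0,\tilde t]$ on which $S\ge 0$ ($\tilde t$ being the root from Remark~\ref{rem:sec5}(1)), with $f>0$ strictly below this ``bump'' and $f<0$ strictly outside it. Also note that since $D_2$ is the identity and $D_1,{\rm e}^{J_1 t}$ are both diagonal, $M_{D_1,D_2}{\rm e}^{J_1 t}M_{D_1,D_2}^{-1}=M{\rm e}^{J_1 t}M^{-1}$, so no optimization over $\lambda_1$ is needed for $\tau_{2,1}$.

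Next I would analyse $S$. Differentiation gives $S'(0)=\big((q_1-p_1)\sqrt K-(q_1+p_1)\big)/(2\alpha_2)$ and, more usefully, $S'(t)=\tfrac{1}{2\alpha_2}\big((q_1-p_1)\sqrt K\,\phi(t)-(q_1+p_1)\big)$ with $\phi(t)^2=\dfrac{1+\sinh^2((q_1-p_1)t/2)}{1+K\sinh^2((q_1-p_1)t/2)}$; since $K\ge 1$ the function $\phi$ is non-increasing (strictly, when $K>1$), so $S$ is concave (strictly, when $K>1$) and $S(t)\to-\infty$. Part (i) is then immediate: if $K\le((q_1+p_1)/(q_1-p_1))^2$ then $S'(0)\le 0$, so $S'<0$ on $(0,\infty)$, so $S<0$ there, whence $f<0$ throughout $Q_1$ and $f\le 0$ on its closure, and~\eqref{eq:newform-21} yields stability under arbitrary switching with $\tau_{2,1}=0$. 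In part (ii) we have $S'(0)>0$, so $S$ rises to a unique maximum at $t_0$ with $S'(t_0)=0$; substituting $\cosh^2=1+\sinh^2$ into $S'(t_0)=0$ and using $4p_1q_1=(q_1+p_1)^2-(q_1-p_1)^2$ reduces this to the stated equation for $t_0$, and inserting the resulting value of $\sinh^2\!\big(\tfrac{q_1-p_1}{2}t_0\big)$ back into $S$ gives the stated closed form for $S(t_0)$.

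It then remains to determine the least $\tau\ge 0$ for which $\mathcal{R}_\tau$ misses the bump, i.e. $\tau_{2,1}=\sup_{t\ge 0}\min\{t,S(t)\}$ (this is precisely the condition ``$t\le\tau$ or $S(t)\le\tau$ for every $t$''). If $S(t_0)\le t_0$, then $\min\{t,S(t)\}\le S(t)\le S(t_0)$ with equality at $t=t_0$, so the supremum is $S(t_0)$, giving case (a). If $S(t_0)>t_0$, then strict concavity forces $S(t)-t$ to vanish at $0$, be positive on $(0,t_0]$, and have exactly one further zero $t^{**}>t_0$; on $(0,t^{**})$ we have $S(t)>t$ so $\min\{t,S(t)\}=t$, and on $[t^{**},\infty)$ (where $S$ is decreasing) $\min\{t,S(t)\}=S(t)\le t^{**}$, so the supremum is $t^{**}$, and $S(t^{**})=t^{**}$ is exactly $K\sinh^2\!\big(\tfrac{q_1-p_1}{2}t\big)=\sinh^2\!\big(\tfrac{q_1+p_1+2\alpha_2}{2}t\big)$, giving case (b). For this $\tau_{2,1}$ the set $\mathcal{R}_{\tau_{2,1}}$ is connected, unbounded, disjoint from $\{f=0\}$, and $f<0$ there for large $t,s$, hence $f<0$ on all of $\mathcal{R}_{\tau_{2,1}}$, while $f\le 0$ on its closure with equality only at an isolated boundary point; then~\eqref{eq:newform-21} and Remark~\ref{rmk:stab} give stability on $S_{\tau_{2,1}}$ and asymptotic stability on $S'_{\tau_{2,1}}$.

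I expect the main obstacle to be establishing the strict concavity of $S$ cleanly: it is the structural fact that makes $t_0$ a genuine and unique maximum and that controls the position of the bump relative to the diagonal $s=t$, thereby guaranteeing that cases (a) and (b) are exhaustive and that $\sup_t\min\{t,S(t)\}$ is attained where claimed. The remaining steps — the explicit $\tanh^2$ equation for $t_0$, the closed form for $S(t_0)$, and the crossing equation of case (b) — are routine manipulations with hyperbolic identities.
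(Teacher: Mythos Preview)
Your proposal is correct and follows essentially the same approach as the paper: parametrize the zero set of $f$ as the graph of $S$ via~\eqref{zerosetcr}, establish that $S$ is concave with a unique maximum at $t_0$, and then do the case split on $S(t_0)\lessgtr t_0$. Your derivation of concavity through the monotonicity of $\phi$ and your formulation $\tau_{2,1}=\sup_t\min\{t,S(t)\}$ are slightly more explicit than the paper's treatment (which simply asserts $S''<0$ and appeals to Figure~\ref{fig:rc21}), and your observation that $M_{D_1,D_2}{\rm e}^{J_1 t}M_{D_1,D_2}^{-1}$ is independent of $D_1$ nicely explains why no $\lambda_1$-optimization enters here, but the logical skeleton is identical.
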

	
	\begin{proof}
		If $K=1$,~\eqref{zerosetcr} becomes $2\alpha_2 s_t=-2p_1t$, and hence there is no positive solution $s_t$. \\
		Let us denote the expression on the right hand side of~\eqref{zerosetcr} as the function $g(t)$.\\ 
		If $K> 1$, we will prove that the zero set, $Z=\{(t,s)\colon\ f(t,s)=0\}\cap Q_1$, of Schur's function $f(t,s)$ takes one of the three forms as shown in Figure~\ref{fig:rc21}. Now $g'(t)<0$ if and only if \[
			\tanh^2\left(\frac{q_1-p_1}{2}\right)t > \frac{1}{K-1}\left(K\left(\frac{q_1-p_1}{q_1+p_1}\right)^2-1 \right).
		\]
		If $1<K\le\left((q_1+p_1)/(q_1-p_1)\right)^2$, the above inequality holds true for all $t>0$. Thus $g'(t)<0$ for all $t>0$. Since $g(0)=0$, $g(t)<0$ for all $t>0$. \\		
		If $K>\left((q_1+p_1)/(q_1-p_1)\right)^2$, $\widetilde{t}>0$ by Remark~\ref{rem:sec5}. Also there exists a unique $t_0>0$ such that \begin{eqnarray*}
			\tanh^2\left(\frac{q_1-p_1}{2}\right)t_0&=&\frac{1}{K-1}\left(K\left(\frac{q_1-p_1}{q_1+p_1}\right)^2-1 \right).
		\end{eqnarray*}
		Thus the function $g(t)$ increases on $[0,t_0)$ and decreases on $(t_0,\infty)$.Hence, $S(t)=s_t=(1/\alpha_2)g(t)$ increases on $[0,t_0)$ and then decreases to zero as $t$ approaches $\tilde{t}$. Thus, $Z=\{\left(t,S(t)\right)\colon\  t\in(0,\tilde{t})\}$. Since $S''(t)<0$ for all $t>0$, the graph of the zero set of $f(t,s)$ is concave down and can be classified into the following three cases. The first two cases correspond to when $S(t_0)\le t_0$. It is clear that in these cases, the zero set of $f(t,s)$ does not intersect with the region $\mathcal{R}_{S(t_0)}=\{(t,s)\in\mathbb{R}^2\colon\, s,t>S(t_0)\}$.
		
		\begin{figure}[h!]
			\centering
			\includegraphics[scale=0.25]{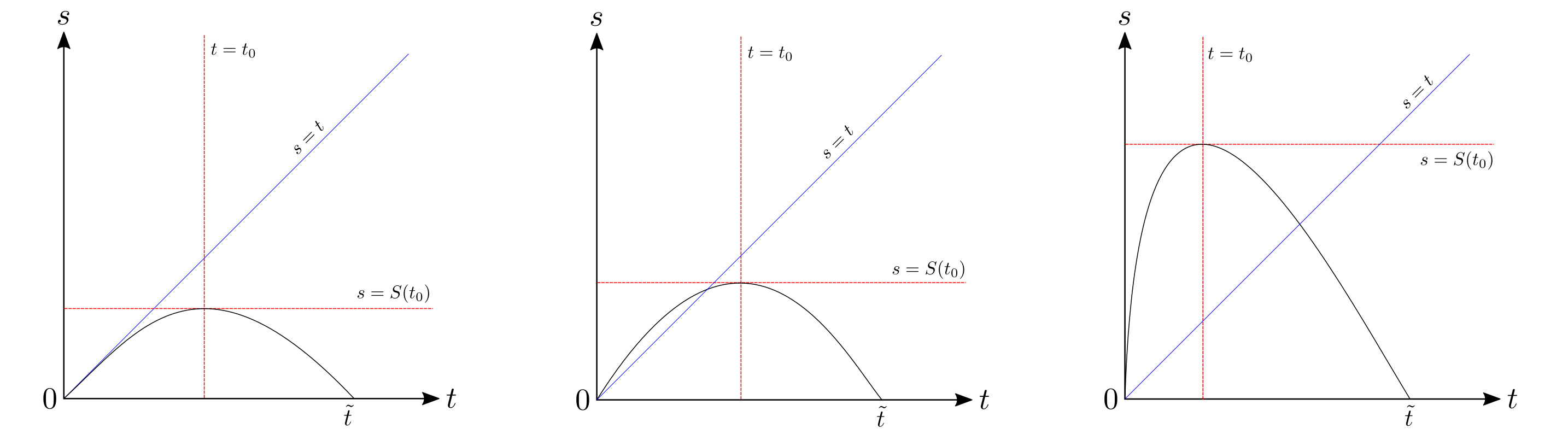}\caption{Zero set of $f(t,s)$: the first two correspond to $S(t_0)\le t_0$ and the right most corresponds to $S(t_0)> t_0$.}\label{fig:rc21}
		\end{figure}
		
		The third and the final case corresponds to $S(t_0)>t_0$. There is a unique positive root of $f$ in the direction $s=t$, call it $\tau$. Then the zero set does not intersect with the region $\mathcal{R}_\tau=\{(t,s)\in\mathbb{R}^2\colon\, s,t>\tau\}$ and hence $\tau_{1,2}=\tau$. Moreover, $\tau$ is the unique positive root of $f(t,t)=0$ and hence the result follows.
	\end{proof}
	
	\begin{remark}
		Using notations in the section, $\tau_{2,1}\le\tau_{1,2}$. To observe this, we consider the functions \begin{eqnarray*}
		S_{1,2}(t)&=&-\left(\frac{q_1+p_1}{2\alpha_2}\right)t+\frac{1}{\alpha_2}\sinh^{-1}\left(\lvert ab+cd\rvert\cosh\left(\frac{q_1-p_1}{2}\right)t+\sinh\left(\frac{q_1-p_1}{2}\right)t\right),\\
		 S_{2,1}(t)&=&-\left(\frac{q_1+p_1}{2\alpha_2}\right)t+\frac{1}{\alpha_2}\sinh^{-1}\left(\sqrt{\left(a^2+c^2\right)\left(b^2+d^2\right)}\sinh\left(\frac{q_1-p_1}{2}\right)t\right),
		\end{eqnarray*}
	where $S_{1,2}$ is the curve bounding the zero set in Theorem~\ref{rc:thm12} and $S_{2,1}$ is the parametrization of the zero set in the feasible region in Theorem~\ref{rc:thm21}. Now, $S_{2,1}(t)< S_{1,2}(t)$ if and only if \[
	\tanh\left(\frac{q_1-p_1}{2}\right)t< \frac{1}{\lvert ab+cd\rvert}+\sqrt{1+\frac{1}{(ab+cd)^2}}.
	\]
	The above equivalence is proved using $\left(a^2+c^2\right)\left(b^2+d^2\right)-1=(ab+cd)^2$, and is always true since $\tanh$ is bounded above by $1$. This implies that, $\left\|M_{D_1,D_2} {\rm{e}}^{J_1 t}\, M_{D_1,D_2}^{-1}{\rm{e}}^{J_2 s} \right\|<1$, for all $t,s> \tau_{1,2}$. Since $\xi=\tau_{2,1}$ is the smallest value satisfying $\left\|M_{D_1,D_2} {\rm{e}}^{J_1 t}\, M_{D_1,D_2}^{-1}{\rm{e}}^{J_2 s} \right\|<1$, for all $t,s>\xi$, we have $\tau_{1,2}\ge \tau_{2,1}$.
	\end{remark}
	\section{Both $A_1$ and $A_2$ are defective}\label{sec:NN}
	For $i=1,2$, let $A_i$ be a defective matrix with canonical form $J_i=\begin{pmatrix}
		-n_i & 1\\0 & -n_i
	\end{pmatrix}$, for some $n_i>0$. 
	
	\subsection{Computing $\tau_{2,1}$} \label{sec:nnt21}
	Note that
		\begin{eqnarray*}
		\left\|M {\rm{e}}^{J_1 t}\, M^{-1}{\rm{e}}^{J_2 s}\right\| &\le & \left\|M {\rm{e}}^{J_1 t}\, M^{-1}{\rm{e}}^{-n_2 s} \right\|\theta(s),
	\end{eqnarray*}
	where $\theta(s)=\begin{Vmatrix}\begin{pmatrix}
		1 & s\\ 0 & 1
		\end{pmatrix}
	\end{Vmatrix}=\sqrt{1+\frac{s^2}{2}+s\sqrt{1+\frac{s^2}{4}}}$. \\
Observe that $\left\|M {\rm{e}}^{J_1 t}\, M^{-1}{\rm{e}}^{-n_2 s} \right\|\theta(s)<\nobreak 1$ if and only if 
		\begin{itemize}
		\item[(C1)] $(t,s)$ lies in the feasible region $n_2 s-\ln\theta (s)+n_1 t>0$, and
		\item[(C2)] $(t,s)$ satisfies the inequality
		\begin{eqnarray*}
			2+(a^2+c^2)^2\, t^2\,\theta(s)^2 {\rm e}^{-2 (n_2 s + n_1 t)}<1+\theta(s)^4{\rm e}^{-4 (n_2 s + n_1 t)},
		\end{eqnarray*}
		which can be simplified to the form $f(t,s)<0$, where \begin{eqnarray}\label{nn:fts}
			f(t,s)&=&\left(\frac{a^2+c^2}{2}\right) t-\left\lvert\,\sinh \left(n_1t+n_2 s-\ln\theta(s)\,\right)\,\right\rvert.
		\end{eqnarray}
		\end{itemize}
	
	The goal to find the smallest $\tau=\tau_{1,2}\ge 0$ such that both (C1) and (C2) are satisfied in the region $\mathcal{R}_\tau=\{(t,s)\colon\, t,s>\tau\}$. We will find the smallest $\tau\ge 0$ such that~\eqref{nn:fts} is satisfied in the region $\mathcal{R}_\tau$, and then observe that the first inequality $n_2 s-\ln\theta (s)+n_1 t>0$ is also satisfied in the same region.
	
	\begin{remark}\label{funcgtheta}
		Denote $g(s)=n_2 s-\ln \theta(s)$. We can make the following observations.\begin{enumerate}
			\item $g''(s)> 0$, for all $s>0$.
			\item When $n_2<1/2$, the function $g$ decreases on $(0,s_0)$, increases on $(s_0,\infty)$ and is concave up, where  $s_0=\sqrt{(1/n_2^2)-4}$. The function attains minimum value
			\begin{eqnarray*}
				g(s_0)=\sqrt{1-4n_2^2}-\ln\sqrt{\frac{1}{2n_2^2}-1+\frac{\sqrt{1-4n_2^2}}{2n_2^2}}=-g_{min}<0.
			\end{eqnarray*}
		 	The function $g$ has a unique positive root, $\tilde{s}>s_0$.
			\item When $n_2\ge 1/2$, $g$ is strictly increasing, hence monotone. Thus, $g^{-1}\colon [0,\infty)\to[0,\infty)$ exists and is strictly increasing as well.
		\end{enumerate}
	The possible graphs of the function $g$ are shown in Figure~\ref{fig:g}.
			\begin{figure}[h!]
				\centering
				\includegraphics[width=.8\textwidth]{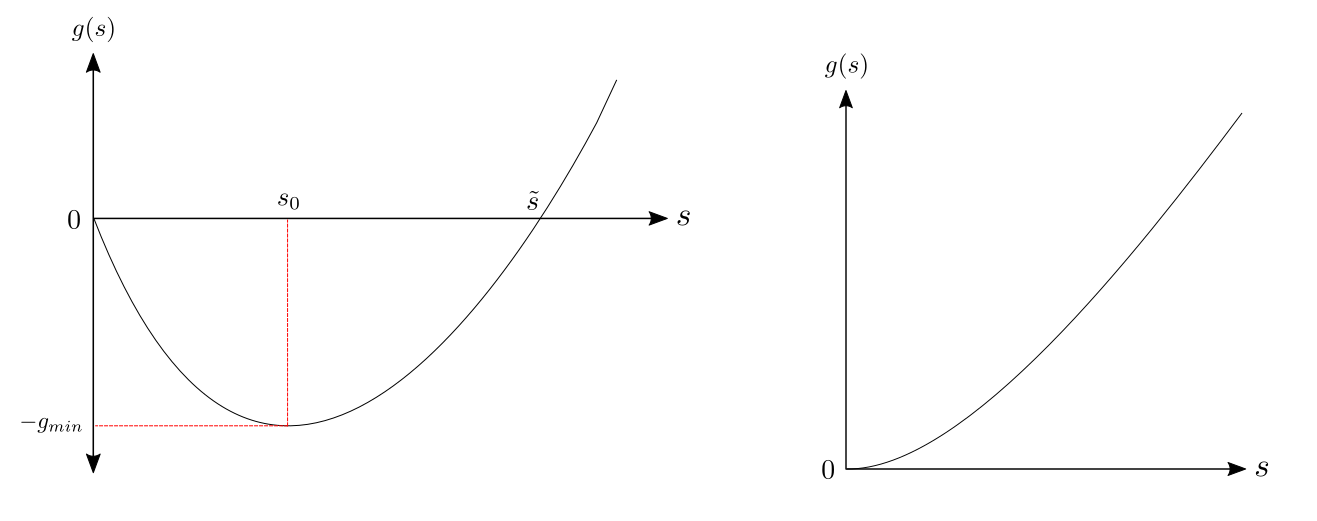}
				\caption{Possible graphs of the function $g$ for $n_2<1/2$ (left) and $n_2\ge 1/2$ (right).}
				\label{fig:g}
			\end{figure}
	\end{remark}
	
	We now present two main results, for the two cases $n_2\ge 1/2$ and $n_2<1/2$ separately.
	
	\begin{theorem}\label{nn:thm21>1/2}
	(With notations in this section) Suppose $n_2\ge 1/2$. Let $K=\left(a^2+c^2\right)/2$. \\
		i) The switched system~\eqref{eq:system} is stable for all signals $\sigma$ if $K\le n_1$. Let $\tau_{2,1}=0$.\\
		ii) If $K>n_1$, let $t_0=\sqrt{\frac{1}{n_1^2}-\frac{1}{K^2}}$ and let $s=S(t_0)$ be the unique positive solution of
	\begin{eqnarray*}
			n_2 s-\ln \sqrt{1+\frac{s^2}{2}+s\sqrt{1+\frac{s^2}{4}}} &=&-n_1\, t_0 + \sinh^{-1}Kt_0.
	\end{eqnarray*}	
\begin{enumerate}[(a)]
		\item If $S(t_0)\le t_0$, let $\tau_{2,1}=S(t_0)$.
		\item If $S(t_0)\ge t_0$, let $\tau_{2,1}$ be the unique positive solution of 
		\begin{eqnarray*}
			n_2 t-\ln \sqrt{1+\frac{t^2}{2}+t\sqrt{1+\frac{t^2}{4}}} &=&-n_1\, t + \sinh^{-1}Kt.
		\end{eqnarray*}
	\end{enumerate}
	Then the switched system~\eqref{eq:system} is stable for all $\sigma\in S_{\tau_{2,1}}$ and asymptotically stable for all $\sigma\in S_{\tau_{2,1}}'$. Moreover when $c\ne 0$, a lower $\tau_{2,1}$ can be obtained using $K_{opt}=c^2/2$ instead of $K$ as defined above.
	\end{theorem}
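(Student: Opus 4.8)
The plan is to collapse the two–variable problem to a comparison of one–variable functions. Since $n_2\ge 1/2$, Remark~\ref{funcgtheta} gives that $g(s)=n_2 s-\ln\theta(s)$ is nonnegative and strictly increasing on $[0,\infty)$ with $g(0)=0$, so the feasibility condition (C1), namely $n_1 t+g(s)>0$, holds on all of $Q_1$; hence on $Q_1$ one has $\left\|M{\rm e}^{J_1 t}M^{-1}{\rm e}^{-n_2 s}\right\|\theta(s)<1$ iff (C2) holds, and in $Q_1$ the absolute value in~\eqref{nn:fts} may be dropped, so (C2) reads $f(t,s)=Kt-\sinh\!\bigl(n_1 t+g(s)\bigr)<0$ with $K=(a^2+c^2)/2$. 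Rearranging,
\[
f(t,s)<0\quad\Longleftrightarrow\quad g(s)>h(t),\qquad h(t):=\sinh^{-1}(Kt)-n_1 t .
\]
This identity is the workhorse: the level set of $f$ becomes the graph of $g^{-1}\circ h$, both factors being monotone with known concavity.

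For (i), if $K\le n_1$ then $h'(t)=K/\sqrt{1+K^2t^2}-n_1\le K-n_1\le 0$, and in fact $h'(t)<0$ for $t>0$, so $h$ is decreasing with $h(0)=0$; hence $h(t)<0\le g(s)$ for all $(t,s)\in Q_1$, and $f<0$ on $Q_1$. No dwell time is needed, and $\left\|M{\rm e}^{J_1 t}M^{-1}{\rm e}^{J_2 s}\right\|\le\left\|M{\rm e}^{J_1 t}M^{-1}{\rm e}^{-n_2 s}\right\|\theta(s)<1$ for all $t,s>0$ gives stability via~\eqref{eq:oldform-21}.

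For (ii), $K>n_1$. Here $h$ is concave on $[0,\infty)$, with $h(0)=0$, $h'(0)=K-n_1>0$, and $h'$ vanishing precisely at $t_0=\sqrt{1/n_1^2-1/K^2}$; a one–variable check gives $h(t_0)=\cosh^{-1}(K/n_1)-\sqrt{1-n_1^2/K^2}>0$, so $h$ increases on $[0,t_0]$, then decreases, vanishing once more at some $\widetilde t>t_0$ and tending to $-\infty$. Because $g$ is increasing, convex and invertible (Remark~\ref{funcgtheta}), the set $\mathcal{C}=\{f=0\}\cap Q_1$ is exactly the graph of $S(t)=g^{-1}(h(t))$ for $t\in[0,\widetilde t\,]$, and $S$ is concave (composition of the concave nondecreasing $g^{-1}$ with the concave $h$), with $S(0)=S(\widetilde t\,)=0$ and a unique maximum at $t_0$. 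Consequently $\{f>0\}\cap Q_1$ is the bounded ``bump'' $\mathcal{B}$ lying under this arc, and $\{f<0\}\cap Q_1=Q_1\setminus\overline{\mathcal{B}}$. So the smallest $\tau$ with $(\tau,\infty)^2\cap\overline{\mathcal{B}}=\emptyset$ equals $\sup_{\overline{\mathcal{B}}}\min(t,s)$, and a short case analysis using only concavity of $S$ and $S(0)=0$ shows this supremum is $S(t_0)$ when $S(t_0)\le t_0$ (attained at $(t_0,S(t_0))$) and is the unique positive root $t^\ast$ of $S(t)=t$ when $S(t_0)>t_0$ (attained at $(t^\ast,t^\ast)$). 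Writing $g$ and $h$ out, the equation defining $S(t_0)$ in part (ii) is precisely $g(S(t_0))=h(t_0)$ and the equation in case (b) is precisely $g(t)=h(t)$; uniqueness of the root in (b) follows since $g-h$ is convex and vanishes at $0$. Finally, on the open region $\mathcal{R}_{\tau_{2,1}}$ the arc $\mathcal{C}$ is absent and $f$ is negative far out, so by connectedness $f<0$ there (and $f\le 0$ on its closure); thus $\left\|M{\rm e}^{J_1 t}M^{-1}{\rm e}^{J_2 s}\right\|\le\left\|M{\rm e}^{J_1 t}M^{-1}{\rm e}^{-n_2 s}\right\|\theta(s)<1$ on $\mathcal{R}_{\tau_{2,1}}$, and stability for $\sigma\in S_{\tau_{2,1}}$ and asymptotic stability for $\sigma\in S_{\tau_{2,1}}'$ follow from~\eqref{eq:oldform-21} and Remark~\ref{rmk:stab}.

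The main obstacle is the geometric bookkeeping in (ii): verifying that $\mathcal{C}$ really is a single concave arc (resting on invertibility of $g$ and concavity of $g^{-1}\circ h$, i.e.\ on Remark~\ref{funcgtheta}), identifying its peak with the critical point $t_0$ of $h$, and correctly evaluating $\sup_{\overline{\mathcal{B}}}\min(t,s)$ in the two diagonal–crossing regimes. The equality that can occur on $\partial\mathcal{R}_{\tau_{2,1}}$ in case (a) is absorbed into the strict submultiplicativity $\left\|M{\rm e}^{J_1 t}M^{-1}{\rm e}^{J_2 s}\right\|<\left\|M{\rm e}^{J_1 t}M^{-1}{\rm e}^{-n_2 s}\right\|\theta(s)$ for $s>0$, as in Remark~\ref{rmk:stab}. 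For the last sentence, when $c\ne 0$ one replaces $P_1$ by the optimal defective Jordan basis of Remark~\ref{nn:optimal}: this leaves ${\rm e}^{J_1 t}$ untouched but replaces the rank–one nilpotent perturbation $M({\rm e}^{J_1 t}-{\rm e}^{-n_1 t}I)M^{-1}$, whose operator norm is $(a^2+c^2)\,t\,{\rm e}^{-n_1 t}$, by one of norm $c^2\,t\,{\rm e}^{-n_1 t}$; the entire argument then runs verbatim with $K$ replaced by $K_{opt}=c^2/2\le K$, producing a smaller $\tau_{2,1}$.
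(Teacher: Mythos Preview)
Your proof is correct and follows essentially the same route as the paper: both dismiss (C1) via monotonicity of $g$ when $n_2\ge 1/2$, rewrite (C2) as $g(s)>\sinh^{-1}(Kt)-n_1t$ (the paper calls the right side $\xi(t)$), parametrize the zero set by $S=g^{-1}\circ\xi$, locate its peak at $t_0=\sqrt{1/n_1^2-1/K^2}$, and then split into the two diagonal-crossing cases. If anything, you are slightly more explicit than the paper in justifying that $S$ is concave (concavity of $g^{-1}$, coming from convexity of $g$, composed with the concave $\xi$) and in spelling out why the stated $\tau_{2,1}$ is the smallest workable threshold.
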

	
	\begin{proof}
		Note that the equation $n_2 s-\ln\theta (s)+n_1 t>0$ is satisfied for all $t,s>0$ since $n_2\ge 1/2$. Thus, we just need to study the function $f(t,s)$. Observe that $f(0,0)=0$ and $f(t,0)=0$ if and only if $\sinh n_1t=Kt$. Moreover $t^{-1}\sinh n_1t$ is increasing function of $t$ and attains minimum value $n_1$ at $t=0$. When $K\le n_1$, $f(t,0)<0$, for all $t>0$. Since $n_2\ge 1/2$, $g$ is an increasing function and hence $f_s(t,s)<0$. Therefore $f(t,s)<0$ for all $t,s>0$ and the result follows.\\
		When $K>n_1$, there exists a unique $\tilde{t}>0$ such that $f(\tilde{t},0)=0$. Also, $f(t,0)>0$ for $t\in(0,\tilde{t})$, and $f(t,0)<0$ for $t\in(\tilde{t},\infty)$. For each $t\in[0,\tilde{t}\,]$, there exists a unique $s_t\ge 0$ such that $f(t,s_t)=0$ and it is given by
		\[
			s_t=g^{-1}\left(-n_1\, t + \sinh^{-1}Kt\right)=g^{-1}(\xi(t)).
		\]
		Note that $\xi'(t)=-n_1+K\left(K^2t^2+1\right)^{-1/2}$. Since $K> n_1$, $\lim_{t\to 0}\,\xi'(t)>0$ and $\xi''(t)<0$ for all $t>0$, hence $\xi$ increases initially and is decreasing later. The function $g^{-1}$ is differentiable and monotonically increasing, hence the differentiable function $S\colon [0,\tilde{t}\,]\to\mathbb{R}$ defined as $S(t)=s_t$ is increasing if and only if $\xi$ is increasing. Thus, $S$ achieves its maximum value at $t_0$ satisfying $\xi'(t_0)=0$, that is, $t_0=\sqrt{(1/n_1^2)-(1/K^2)}$. Also, $S(t)$ parameterizes the zero set $Z=\{(t,s)\in\mathbb{R}^2\colon f(t,s)=0\}\cap Q_1$.
		
		Hence the zero set $Z$ can be classified into three cases as shown in Figure~\ref{fig:rc21}. The first two cases correspond to when $S(t_0)\le t_0$. It is clear that in these two cases, $Z$ does not intersect with the region $\mathcal{R}_{S(t_0)}=\{(t,s)\in\mathbb{R}^2\colon\, s,t>S(t_0)\}$ and hence the result follows.
		
		The third case is when $S(t_0)>t_0$. There is a unique zero of $f(t,s)$ in the direction $s=t$ since $S$ is concave down, denote this zero by $\tau$. Then the zero set $Z$ does not intersect with the region $\mathcal{R}_\tau=\{(t,s)\in\mathbb{R}^2\colon\, s,t>\tau\}$ and the result follows. Moreover, $\tau$ can be computed as the unique positive solution of the equation $\sinh \left(n_1t+g(t)\right)=Kt$. \\
		The case when $c\ne 0$ will be discussed later in Remark~\ref{nn:optimal}.
	\end{proof}
	
	The above result is applicable when at least one subsystem has eigenvalue lying in the interval $(-\infty,-1/2]$, by making suitable relabeling of matrices $A_1,A_2$. The following result is applicable when at least one of the subsystems has eigenvalue in the interval $(-1/2,0)$.
	
	\begin{theorem}\label{nn:thm21<1/2}
		(With notations in this section) Suppose $n_2< 1/2$. Let $K=\left(a^2+c^2\right)/2$ and $L=\sqrt{1-4n_2^2}-\ln\sqrt{\frac{1}{2n_2^2}-1+\frac{\sqrt{1-4n_2^2}}{2n_2^2}}$.\\
		i) If $K\le n_1$, there exists a unique pair $(t_{max},s_{max})$ satisfying the following equation for $t=t_{max}$ and $s=s_{max}$ 
		\begin{eqnarray*}
			-n_1t+\sinh^{-1}Kt&=&n_2s-\ln \sqrt{1+\frac{s^2}{2}+s\sqrt{1+\frac{s^2}{4}}}=L.
		\end{eqnarray*}
	\begin{enumerate}[(a)]
		\item If $s_{max}\ge t_{max}$, let $\tau_{2,1}=t_{max}$. 
		\item If $s_{max}<t_{max}$, let $\tau_{2,1}$ be the unique positive solution of 
		\begin{eqnarray*}
			-n_1t+\sinh^{-1}Kt&=&n_2t-\ln \sqrt{1+\frac{t^2}{2}+t\sqrt{1+\frac{t^2}{4}}}.
		\end{eqnarray*}
	\end{enumerate}
		ii) If $K>n$, let $\tau_{2,1}$ be the unique positive solution of the above equation.\\
		Then the switched system~\eqref{eq:system} is stable for all $\sigma\in S_{\tau_{2,1}}$ and asymptotically stable for all $\sigma\in S_{\tau_{2,1}}'$. Moreover, when $c\ne 0$, a lower $\tau_{2,1}$ can be obtained using $K_{opt}=c^2/2$ instead of $K$ as defined above.
	\end{theorem}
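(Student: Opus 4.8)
The proof follows the same plan as that of Theorem~\ref{nn:thm21>1/2}, the essential new feature being that for $n_2<1/2$ the function $g(s)=n_2 s-\ln\theta(s)$ is no longer monotone. First I would recall, from the discussion leading to~\eqref{nn:fts}, that it suffices to find the least $\tau\ge 0$ for which the Schur's function inequality $f(t,s)<0$ holds on $\mathcal{R}_\tau=\{(t,s):t,s>\tau\}$, where $f(t,s)=Kt-\bigl|\sinh\bigl(n_1 t+g(s)\bigr)\bigr|$ and $K=(a^2+c^2)/2$, and then to check that the feasibility condition (C1), namely $n_1 t+g(s)>0$, also holds on $\mathcal{R}_\tau$. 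By Remark~\ref{funcgtheta}, since $n_2<1/2$, $g$ decreases on $(0,s_0)$, increases on $(s_0,\infty)$, attains its minimum $L=g(s_0)<0$, and has a unique positive zero $\tilde s>s_0$, so it restricts to decreasing and increasing bijections of $(0,s_0]$ and of $[s_0,\tilde s)$ onto $[L,0)$. Since $f(t,s)=0$ forces $n_1t+g(s)=\pm\sinh^{-1}(Kt)$, the zero set $Z=\{f=0\}\cap Q_1$ is obtained by composing the two inverse branches of $g$ with the two functions $\xi_\pm(t)=-n_1t\pm\sinh^{-1}(Kt)$; here $\xi_-$ is strictly decreasing from $0$ to $-\infty$, while $\xi_+$ is strictly decreasing from $0$ to $-\infty$ when $K\le n_1$, and is increasing-then-decreasing (vanishing again at a unique $\hat t>0$) when $K>n_1$.

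The main step is to identify $\max_{(t,s)\in\{f\ge0\}}\min(t,s)$, which — since $\{f\ge0\}\cap Q_1$ is compact and $\min(t,s)$ increases in each variable — is exactly the least admissible $\tau$. In case (i), $K\le n_1$, one has $f(t,0)=Kt-\sinh n_1t<0$ for $t>0$, and for $t>t_{max}$ (the unique root of $\xi_+(t)=L$) both $\xi_\pm(t)<L=\min g$, so $f<0$ there; hence $\{f\ge0\}\subset\{0\le t\le t_{max}\}$ with $(t_{max},s_{max})$, $s_{max}=s_0$, its unique point on $\{t=t_{max}\}$, which is therefore the unique solution of the displayed system. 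If $s_{max}\ge t_{max}$ then $\min(t,s)\le t\le t_{max}=\min(t_{max},s_{max})$ throughout $\{f\ge0\}$, so $\tau_{2,1}=t_{max}$ and (a) follows. If $s_{max}<t_{max}$, then $(t_{max},s_{max})$ lies strictly below the diagonal while $\{f\ge0\}$ (tracing the $\xi_+$-branch of $Z$) also contains points with $t$ near $0$ and $s$ near $\tilde s$, above the diagonal; by connectedness $\max\min(t,s)$ is attained on $\{s=t\}$, at the largest positive root of $f(t,t)=0$, which on the relevant branch is the equation $\sinh^{-1}(Kt)+\sinh^{-1}(t/2)=(n_1+n_2)t$ (using $\ln\theta(t)=\sinh^{-1}(t/2)$). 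Its left side is concave and its right side linear, both vanishing at $0$, so this equation has exactly one positive root, and $\tau_{2,1}$ equals it, giving (b).

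Case (ii), $K>n_1$, is treated the same way, now with $\xi_+$ positive on $(0,\hat t)$, so that $f(\cdot,0)$ has the positive root $\hat t$ and $Z$ has a branch rising above $s=\tilde s$; the analysis again produces $\tau_{2,1}$ as the unique positive root of $\sinh^{-1}(Kt)+\sinh^{-1}(t/2)=(n_1+n_2)t$, which exists and is unique because $K>n_1>n_1+n_2-\tfrac12$ (as $n_2<\tfrac12$). It remains to verify (C1) on $\mathcal{R}_{\tau_{2,1}}$: the infeasible set $\{n_1t+g(s)\le0\}$ lies in $\{t\le -L/n_1\}$ (since $g\ge L$), and the defining relation for $\tau_{2,1}$ is readily seen to force $n_1\tau_{2,1}\ge -L$, so $\mathcal{R}_{\tau_{2,1}}$ is disjoint from it. With (C1) and (C2) both in force, $\bigl\|M_{D_1,D_2}{\rm e}^{J_1 t}M_{D_1,D_2}^{-1}{\rm e}^{J_2 s}\bigr\|<1$ for all $t,s\ge\tau_{2,1}$ (off at most one boundary point), and the stability and asymptotic-stability conclusions follow from~\eqref{eq:newform-21} and Remark~\ref{rmk:stab}. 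The final refinement with $K_{opt}=c^2/2$ follows by repeating the argument with the alternative defective Jordan basis of Remark~\ref{nn:optimal}, which replaces $a^2+c^2$ by $c^2$ throughout.

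I expect the main obstacle to be the second step: with $g$ non-monotone, $Z$ is no longer the graph of a single concave function $S(t)$ as in Theorem~\ref{nn:thm21>1/2} but is assembled from both inverse branches of $g$ composed with $\xi_+$ and $\xi_-$, so one must argue with care that $\{f\ge0\}$ is connected with the asserted boundary arcs, that its rightmost point is precisely $(t_{max},s_{max})$, and that in the relevant sub-cases no part of $Z$ re-enters $\mathcal{R}_{\tau_{2,1}}$, so that the point realizing $\max\min(t,s)$ is exactly the one named in the statement.
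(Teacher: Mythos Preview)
Your plan matches the paper's closely: decompose the zero set of $f$ via $g(s)=\xi_\pm(t)$ (the paper's $F_\pm$), identify the outer arc coming from $\xi_+$ as the relevant boundary, and read off $\tau_{2,1}$ from its rightmost point or its diagonal crossing. The uniqueness argument for the diagonal root via the identity $\ln\theta(t)=\sinh^{-1}(t/2)$ and the concavity of $\sinh^{-1}$ is correct and a bit sharper than the paper's brief ``$(F_+^{-1})''<0$'' phrasing.

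There is, however, a genuine error in your verification of (C1). The claim that the defining relation for $\tau_{2,1}$ ``is readily seen to force $n_1\tau_{2,1}\ge -L$'' is false. With $n_1=0.01$, $n_2=0.3$, $K=0.009$ one computes $s_0\approx 2.67$, $L\approx -0.30$, $t_{max}\approx 128$, so case~(i)(b) applies, and the diagonal equation $(n_1+n_2)t=\sinh^{-1}(Kt)+\sinh^{-1}(t/2)$ gives $\tau_{2,1}\approx 6.1$; hence $n_1\tau_{2,1}\approx 0.06 < 0.30 = -L$, and your proposed inclusion $\mathcal{R}_{\tau_{2,1}}\subset\{t>-L/n_1\}$ fails. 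The paper's argument is the one that works: on the curve $\{n_1t+g(s)=0\}$ one has $f(t,s)=Kt>0$, so this curve lies inside $\{f\ge 0\}$; since $\mathcal{R}_{\tau_{2,1}}$ is connected, contains feasible points (take $t$ large), and by construction contains no point with $f\ge 0$, it cannot cross $\{n_1t+g(s)=0\}$ and is therefore entirely feasible. A smaller issue you already half-anticipated: the step ``by connectedness $\max\min(t,s)$ is attained on $\{s=t\}$'' is not valid for arbitrary connected compact sets straddling the diagonal. What actually pins down case~(i)(b) is that $s\mapsto Z^+(s)=F_+^{-1}(g(s))$ is \emph{concave} (because $F_+^{-1}$ is decreasing and concave while $g$ is convex), so $Z^+(s)-s$ is concave, vanishes at $s=0$, is positive at $s_0$ (since $t_{max}>s_0$) and negative at $\tilde s$, and hence has a unique positive root, lying in $(s_0,\tilde s)$.
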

	
	\begin{proof}
		We split the proof into two parts: when $K\le n_1$ and when $K>n_1$, as these two cases characterize the zero set $Z$ of the function $f(t,s)$. Using the graph of zero set $Z$, we will derive expression for $\tau_{2,1}$ in each of these cases.\\~\\		
		\textit{Case 1:} $K\le n_1$\\
		We will prove that the zero set $Z$ looks like as in Figure~\ref{nn:case1}. Note that $f(t,0)<0$ for all $t>0$. Also for some fixed value of $t$, there might be more than one solutions $s_t$ such that $f(t,s_t)=0$. This happens because for $n_2<1/2$, the function $g(s)$ and hence the expression $n_1t+g(s)$ can assume negative values. Thus, the zero set $Z$ is described by set of equations
		\begin{eqnarray}\label{zerochar2}
			g(s)&=&F_\pm(t),
		\end{eqnarray}
	where $F_\pm(t)=-n_1t\pm\sinh^{-1}Kt$.\\
		As shown in Figure~\ref{fplusandfminusm}, it is easy to check that $F_+(0)=0=F_-(0)$, $F_+$ is concave down and decreasing, and $F_-$ is concave up and decreasing. Also $F_-(t)<F_+(t)<0$, for all $t>0$.\\
			\begin{figure}[h!]
			\centering
			\includegraphics[scale=0.5]{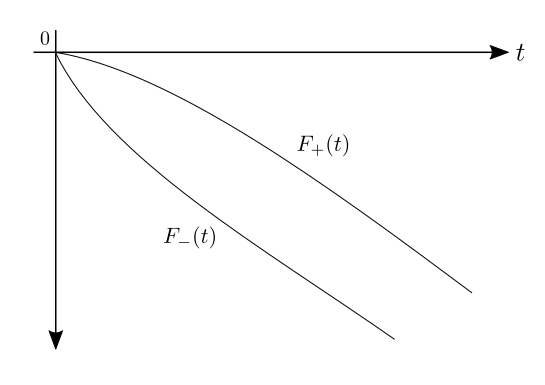}\hspace{50pt}
			\includegraphics[scale=0.5]{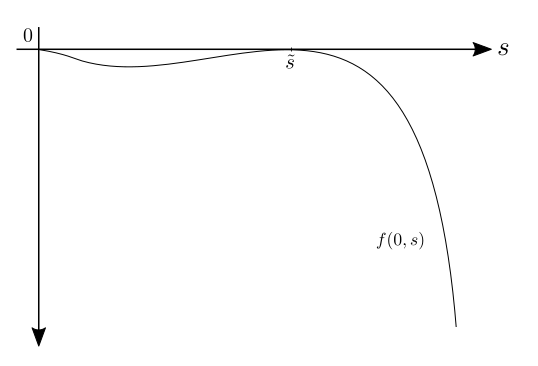}
			\caption{Graphs of $F_+$, $F_-$ and $f(0,s)$}\label{fplusandfminusm}
		\end{figure}
	
		Now, $f(0,s)=0$ if and only it $g(s)=0$ if and only if $s\in\{0,\tilde{s}\}$ ($\tilde{s}$ as in Remark~\ref{funcgtheta}). Also, for $s\in\{0,\tilde{s}\}$, $t=0$ is the only root of $f(t,s)$ since $F_+$ and $F_-$ are both decreasing. For the same reason, the zero set $Z$ does not intersect with the region $\mathcal{R}=\{(t,s)\in\mathbb{R}^2\colon\, s>\tilde{s}\}$. For each $s\in[0,\tilde{s}]$, $g(s)\le 0$ implying the existence of unique $t_s^\pm\ge 0$ such that $g(s)=F_\pm(t_s^\pm)$. Note that $t_s^-<t_s^+$.
		
		Now, for each $s\in(0,\tilde{s})$, $g(s)-F_+(t_s^+)=0$ and $F_+'(t_s^+)<0$. Therefore by the implicit function theorem, the zero set arising from equation $g(s)=F_+(t)$ in~\eqref{zerochar2} can be parametrized by the function $Z^+\colon\,[0,\tilde{s}]\to\mathbb{R}$ defined as $Z^+(s)=t_s^+$. Similarly, the zero set arising from equation $g(s)=F_+(t)$ in~\eqref{zerochar2} can be parametrized by the function $Z^-\colon\,[0,\tilde{s}]\to\mathbb{R}$ given by $Z^-(s)=t_s^-$. Thus, the zero set $Z$ of $f(t,s)$ is union of the graphs of $Z^+$ and $Z^-$.
		
		We claim next that for every $t$, there are at most two choices of $s$ such that $F_+(t)=g(s)$. It follows from the observations below:
		\begin{enumerate}
			\item if $F_+(t)\in(-g_{min},0]$, there are exactly two positive values of $s$ such that $F_+(t)=g(s)$,
			\item if $F_+(t)=-g_{min}$, there is a unique positive value of $s$ such that $F_+(t)=g(s)$, and
			\item if $F_+(t)<-g_{min}$, then there is no positive value of $s$ such that $F_+(t)=g(s)$,
		\end{enumerate}
		where $g_{min}$ is as defined in Remark~\ref{funcgtheta}. 
		
		Similarly, for every $t$, there are at most two positive values of $s$ such that $F_-(t)=g(s)$. Hence both $Z^+$ and $Z^-$ have a unique maxima. Thus, the zero set $Z$ looks like as in Figure~\ref{nn:case1}. Note that the solution curve of $n_2 s-\ln\theta (s)+n_1 t=0$ lies in the region bounded by the curves $(Z^+(s),s)$ and $(Z^-(s),s)$. This is due to the following two observations: any point $(t,s)$ on the solution curve of $n_2 s-\ln\theta (s)+n_1 t=0$ satisfies $f(t,s)\ge 0$ by~\eqref{nn:fts}; and $f$ is non-negative only in the region bounded by the curves $(Z^+(s),s)$ and $(Z^-(s),s)$ (including the boundary).
		\begin{figure}[h!]
			\centering
			\includegraphics[scale=0.4]{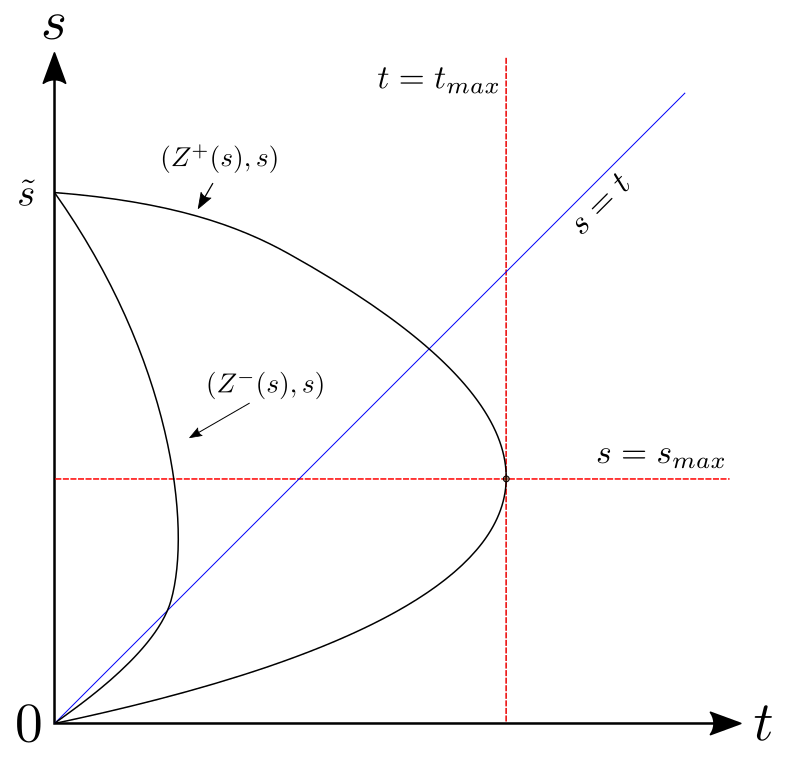}\hfill
			\includegraphics[scale=0.4]{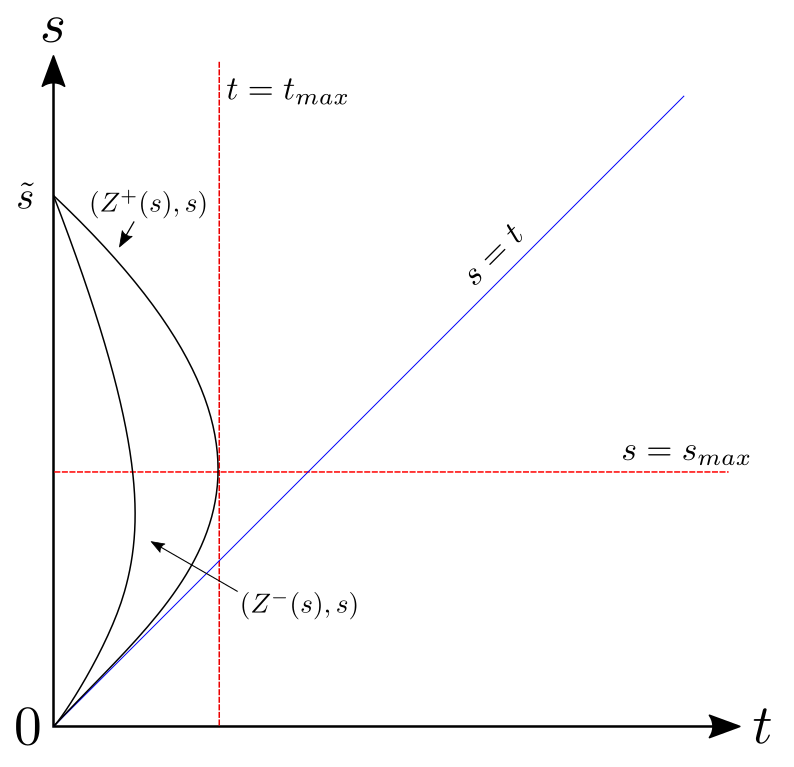}
			\caption{Zero set $Z$ when $K\le n_1$: $t_{max}\ge s_{max}$ (left) and $t_{max}< s_{max}$ (right).}
			\label{nn:case1}
		\end{figure}
		To compute $\tau_{2,1}$, we will find the smallest $\tau$ such that the region $\mathcal{R}_\tau=\{(t,s)\in\mathbb{R}^2\colon\,t,s>\tau\}$ does not intersect the zero set $Z$ of $f(t,s)$. We just need to consider the graph of $Z^+$ for this purpose. Also, $n_2 s-\ln\theta (s)+n_1 t>0$ and $f(t,s)<0$ in this $\mathcal{R}_\tau$ by continuity. The graph of $Z^+$ lies to the left of the line $t=t_{max}$ and intersects it at $(t_{max},s_{max})$ which is given by the unique solutions of the equations $F_+(t_{max})=-g_{min}=g(s_{max})$.
		
		As before, if $s_{max}\ge t_{max}$, $\tau_{2,1}=t_{max}$. On the other hand, if $s_{max}<t_{max}$, $\tau_{2,1}$ is given by the unique solution of $g(t)=F_+(t)$. The uniqueness follows from the fact that the graph of $Z^+$ is concave down since $(F_+^{-1})''\,(t)<0$ for all $t>0$.\\~\\		
		\textit{Case 2:} $K> n_1$\\
		Recall that the zero set $Z$ can be described by~\eqref{zerochar2}. The function $f(t,0)$ has a positive derivative at $t=0$, is concave up initially and then concave down, and hence it has a unique positive zero $\tilde{t}$, that is, $f(\tilde{t},0)=0$. Also, $f(0,s)$ has $\tilde{s}$ as its unique positive root, where $\tilde{s}$ is as in Remark~\ref{funcgtheta}. Moreover, $F_-$ is concave up decreasing function, $F_+$ is concave down, $(F_+)'(0)>0$ and $F_+(t)$ attains maximum (say $F_+^{max}$) at $t=\sqrt{(1/n_1^2)-(1/K^2)}$. We make the following observations for the part of zero set of $f(t,s)$ in $Q_1$ arising from $F_+(t)=g(s)$:
		\begin{figure}[h!]
			\centering
			\includegraphics[scale=0.4]{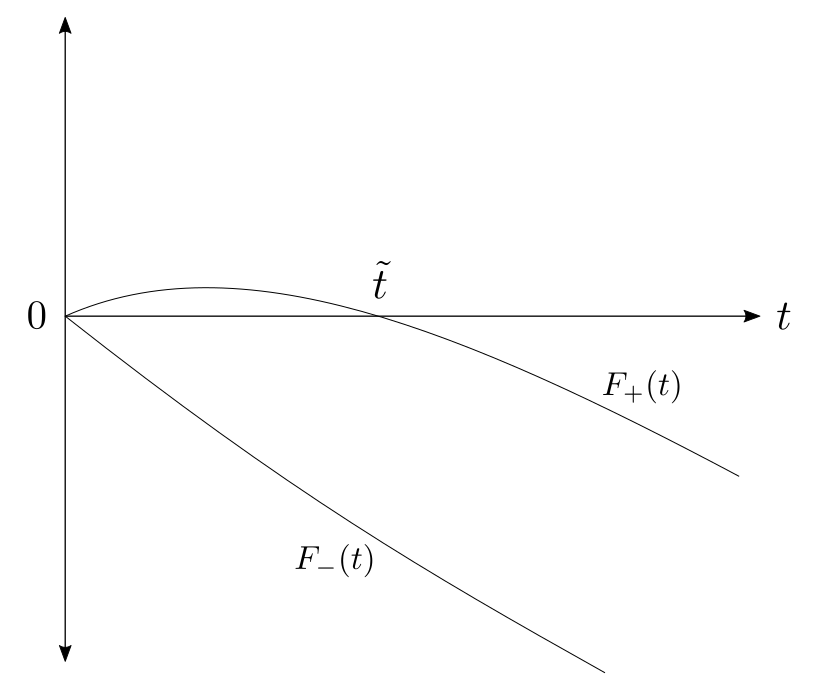}\hfill
			\includegraphics[scale=0.4]{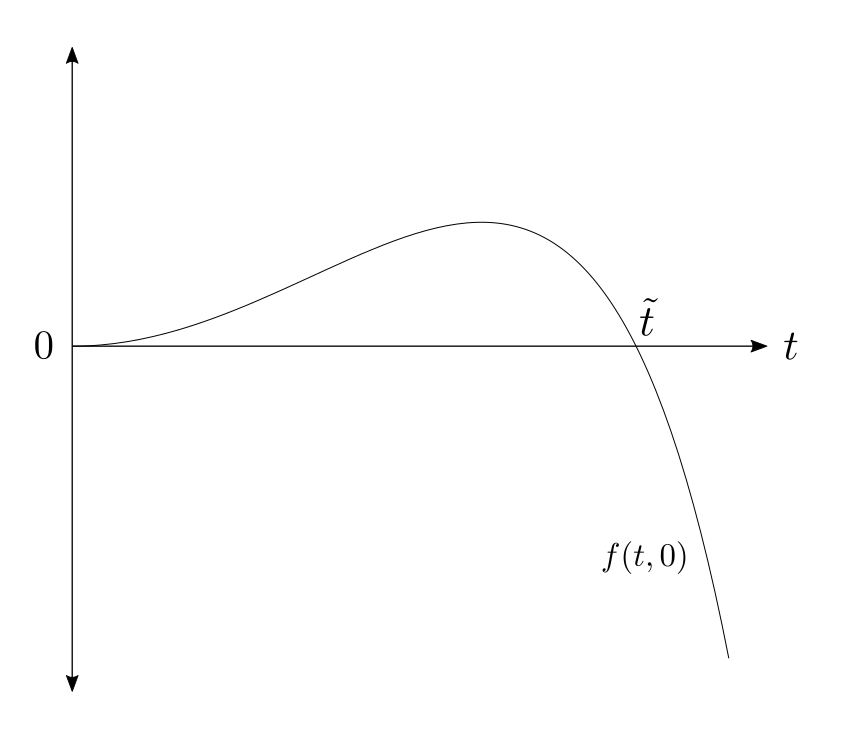}
			\caption{Graphs of $F_+$, $F_-$ and $f(t,0)$.}\label{fpandfmnl1/2}
		\end{figure}
		\begin{enumerate}
			\item For each $0<t<\tilde{t}$, $0=g(0)<F_+(t)$. Since $g$ first decreases and then increases, there exists a unique $s_t$ such that $g(s_t)=F_+(t)$. 
			\item For $t=\tilde{t}$, the only values $s$ satisfying $0=F_+(\tilde{t})=g(s)$ are $0$ and $\tilde{s}$. 
			\item For each $t>\tilde{t}$, there are at most two zeros of $F_+(t)=g(s)$ depending on value $t$ as follows:
			\begin{enumerate}
				\item if $F_+(t)\in(-g_{min},0]$, there are two values of $t$ satisfying  $F_+(t)=g(s)$,
				\item if $F_+(t)=-g_{min}$, there is exactly one value of $t$ satisfying  $F_+(t)=g(s)$, and
				\item if $F_+(t)<-g_{min}$, there is no value of $t$ satisfying  $F_+(t)=g(s)$.
			\end{enumerate}
			\item For each $s<\tilde{s}$, $g(s)<0<F_+(0)$. Since $F_+$ first increases and then decreases, there is a unique value $t_s$ such that $F_+(t_s)=g(s)$.
			\item For $s=\tilde{s}$, the only values $t$ satisfying $F_+(t)=g(\tilde{s})=0$ are $0$ and $\tilde{t}$.
			\item For each $s>\tilde{s}$, $g(s)>0>F_+(0)$ and there are at most two zeros of $F_+(t)=g(s)$ depending on the value of $s$ as follows:
			\begin{enumerate}
				\item if $g(s)>F_+^{max}$, there are no values $t$ such that $F_+(t)=g(s)$,
				\item if $g(s)=F_+^{max}$, there is exactly one value of $t$ satisfying  $F_+(t)=g(s)$, and
				\item if $0<g(s)<F_+^{max}$, there are two values of $t$ satisfying  $F_+(t)=g(s)$.
			\end{enumerate}
		\end{enumerate}
	In addition to the points (1) and (4), the following inequalities are satisfied
	\[
	\frac{\partial}{\partial t} \left(g(s_t)-F_+(t)\right)<0,\ t\in[0,\tilde{t}];\text{ and }\ \frac{\partial}{\partial s} \left(g(s)-F_+(t_s)\right)<0,\ s\in[0,\tilde{s}].
	\]
		
	Thus we can apply the implicit function theorem to obtain the functions $Z_1\colon[0,\tilde{t}]\to\mathbb{R}$ and $Z_2\colon[0,\tilde{s}]\to\mathbb{R}$ defined as $Z_1(t)=s_t$ and $Z_2(s)=t_s$. Moreover, both $Z_1$ and $Z_2$ have a unique maxima as a consequence of the points (3) and (6), and their graphs intersect at $(\tilde{t},\tilde{s})$. Thus the solution curve of $g(s)=F_+(t)$ behaves like Figure~\ref{zerosetnn2}.\\
		\begin{figure}[h!]
			\centering
			\includegraphics[scale=0.4]{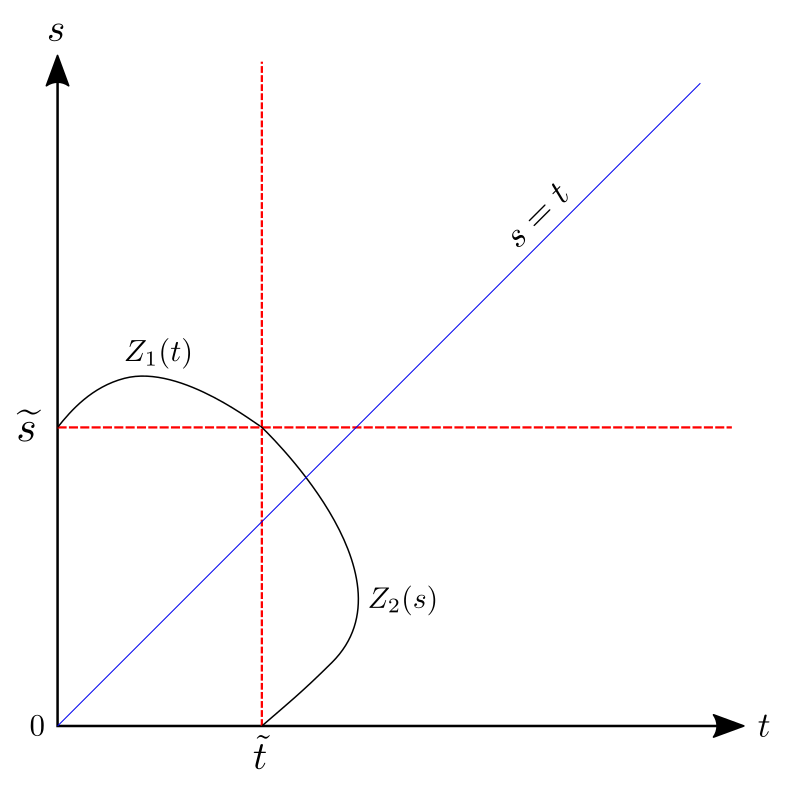}
			\caption{Zero set $Z$ when $K>n_1$.}
			\label{zerosetnn2}
		\end{figure}
	The part of $Z$ arising from $F_+(t)=g(s)$ splits the first quadrant into two regions. As in the previous case, the part of $Z$ arising from the $F_-(t)=g(s)$ lies in the bounded region and so does the curve $n_2 s-\ln\theta (s)+n_1 t=0$. Hence $\tau_{2,1}$ can be computed by studying the curve $F_+(t)=g(s)$. Thus $\tau_{2,1}$ is given by the unique positive solution of the equation $g(t)=F_+(t)$. Uniqueness of the solution is due to the following facts: $g(0)=F_+(0)=0$, $g'(0)<0<F_+'(0)$, $g$ is concave up, and $F_+$ is concave down.
	\end{proof}

	\subsection{Computing $\tau_{1,2}$} 
	
	\begin{theorem}\label{nn:thm12>1/2}
		(With notations in this section) Suppose $n_1\ge 1/2$. Let $R=\left(c^2+d^2\right)/2$. \\
		i) If $R\le n_2$, the switched system~\eqref{eq:system} is stable for all signals $\sigma$. Let $\tau_{1,2}=0$. \\
		ii) If $R>n_2$, let $s_0=\sqrt{\frac{1}{n_2^2}-\frac{1}{R^2}}$ and let $t=T(s_0)$ be given by the unique solution of 
		\begin{eqnarray*}
			n_1 t-\ln \sqrt{1+\frac{t^2}{2}+t\sqrt{1+\frac{t^2}{4}}} &=&-n_2\, s_0 + \sinh^{-1}(Rs_0).
		\end{eqnarray*}
		\begin{enumerate}[(a)]
			\item If $T(s_0)\le s_0$, let $\tau_{1,2}=T(s_0)$.
		\item If $T(s_0)\ge s_0$, let $\tau_{1,2}$ be given by the unique positive solution of \begin{eqnarray*}
			n_1 t-\ln \sqrt{1+\frac{t^2}{2}+t\sqrt{1+\frac{t^2}{4}}} &=&-n_2\, t + \sinh^{-1}(Rt).
		\end{eqnarray*}
	\end{enumerate}
			Then the switched system~\eqref{eq:system} is stable for all $\sigma\in S_{\tau_{1,2}}$ and asymptotically stable for all $\sigma\in S_{\tau_{1,2}}'$. When $c\ne 0$, a lower $\tau_{1,2}$ can be obtained using $R_{opt}=c^2/2$ instead of $R$.
	\end{theorem}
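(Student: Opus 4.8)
The plan is to carry over the proof of Theorem~\ref{nn:thm21>1/2} essentially verbatim under the substitution $(a,b,c,d,n_1,n_2,t,s)\mapsto(d,-b,-c,a,n_2,n_1,s,t)$, which amounts to interchanging $J_1$ and $J_2$ and replacing $M$ by $M^{-1}$ in~\eqref{eq:tau}. First I would pass to the estimate
\[
\bigl\|M^{-1}{\rm e}^{J_2 s}\,M\,{\rm e}^{J_1 t}\bigr\|\ \le\ \bigl\|M^{-1}{\rm e}^{J_2 s}\,M\,{\rm e}^{-n_1 t}\bigr\|\,\theta(t),
\]
and show that the right-hand side is $<1$ if and only if (C1) $(t,s)$ lies in the feasible region $n_1 t-\ln\theta(t)+n_2 s>0$, and (C2) $f(t,s)<0$, where, writing $g(t)=n_1 t-\ln\theta(t)$,
\[
f(t,s)\ =\ \frac{c^2+d^2}{2}\,s\ -\ \bigl|\sinh\bigl(n_2 s+g(t)\bigr)\bigr|.
\]
The constant $R=(c^2+d^2)/2$ appears because $M^{-1}\left(\begin{smallmatrix}0&1\\0&0\end{smallmatrix}\right)M=\frac{1}{\det M}\left(\begin{smallmatrix}cd&d^2\\-c^2&-cd\end{smallmatrix}\right)$ is rank one with vanishing trace and squared Frobenius norm $(c^2+d^2)^2$, exactly as $(a^2+c^2)/2$ entered~\eqref{nn:fts}. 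The hypothesis $n_1\ge1/2$ now plays the part that $n_2\ge1/2$ played in Theorem~\ref{nn:thm21>1/2}: by the $n_1$-version of Remark~\ref{funcgtheta}(3), $g$ is a strictly increasing bijection of $[0,\infty)$ with $g(0)=0$, so (C1) holds on all of $Q_1\setminus\{(0,0)\}$ and only (C2) has to be analysed.

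Next I would describe the zero set $Z=\{(t,s)\in Q_1:f(t,s)=0\}$. Monotonicity of $g$ makes $f$ strictly decreasing in $t$, so if $R\le n_2$ then $f(0,s)=Rs-\sinh(n_2 s)\le0$ and hence $f<0$ on all of $Q_1$; together with (C1) this yields $\bigl\|M^{-1}{\rm e}^{J_2 s}\,M\,{\rm e}^{J_1 t}\bigr\|<1$ for every $t,s>0$, proving part (i). If $R>n_2$, then $s\mapsto\sinh(n_2 s)/s$ is strictly increasing from $n_2$, so $f(0,\cdot)$ has a unique positive root $\widetilde{s}$; for $s\in[0,\widetilde{s}]$ the equation $f(t,s)=0$ reads $g(t)=\sinh^{-1}(Rs)-n_2 s=:\xi(s)$, so $Z$ is the graph $t=T(s):=g^{-1}(\xi(s))$, $s\in[0,\widetilde{s}]$, with $T(0)=T(\widetilde{s})=0$. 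Since $\xi'(s)=R/\sqrt{1+R^2s^2}-n_2$ is positive at $s=0$ and strictly decreasing, $\xi$ is unimodal with maximum at $s_0=\sqrt{1/n_2^2-1/R^2}$; and since $g$ is convex (Remark~\ref{funcgtheta}(1)), $g^{-1}$ is increasing and concave, so $T=g^{-1}\circ\xi$ increases on $(0,s_0)$, decreases on $(s_0,\widetilde{s})$ and is concave down. Thus $Z$ takes one of the three shapes of Figure~\ref{fig:rc21} with the roles of the two axes interchanged, and $T(s_0)=g^{-1}(\xi(s_0))$ is precisely the solution singled out in the statement.

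For the last step I would split on the position of the apex of $Z$. If $T(s_0)\le s_0$, then $Z$ does not meet $\mathcal R_{T(s_0)}=\{(t,s):t,s>T(s_0)\}$, so (C1) and (C2) hold on $\mathcal R_{T(s_0)}$ and $\tau_{1,2}=T(s_0)$ works; if $T(s_0)>s_0$, concavity of $T$ forces $Z$ to cross the diagonal $s=t$ at exactly one point $\tau>0$ apart from the origin, $Z$ avoids $\mathcal R_\tau$, and $\tau$ is the unique positive root of $f(t,t)=0$, i.e.\ of $\sinh\bigl(n_2 t+g(t)\bigr)=Rt$, which rearranges to $n_1 t-\ln\sqrt{1+t^2/2+t\sqrt{1+t^2/4}}=-n_2 t+\sinh^{-1}(Rt)$. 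In both branches $\bigl\|M^{-1}{\rm e}^{J_2 s}\,M\,{\rm e}^{J_1 t}\bigr\|<1$ on $\mathcal R_{\tau_{1,2}}$, so stability on $S_{\tau_{1,2}}$ follows from~\eqref{eq:newform-12} and Remark~\ref{rmk:stab}(1), and since equality in (C2) on the closure can occur only at $(\tau_{1,2},\tau_{1,2})$, Remark~\ref{rmk:stab}(2) gives asymptotic stability on $S'_{\tau_{1,2}}$. The sharpening to $R_{opt}=c^2/2$ when $c\ne0$ is obtained, just as for $\tau_{2,1}$, by passing to a better-adapted Jordan basis of the defective matrix $A_1$ (Remark~\ref{nn:optimal}). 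The one point that really needs care — and the main, if modest, obstacle — is establishing the unimodality of $\xi$ and the concavity of $T$ (equivalently $(g^{-1})''<0$, which follows from $g''>0$); everything else is a direct transcription of the proof of Theorem~\ref{nn:thm21>1/2}.
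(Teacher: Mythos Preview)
Your proposal is correct and follows exactly the approach indicated in the paper, whose proof reads in full ``Analogous to the proof of Theorem~\ref{nn:thm21>1/2}.'' You have carried out precisely that analogy via the substitution $(a,b,c,d,n_1,n_2,t,s)\mapsto(d,-b,-c,a,n_2,n_1,s,t)$, correctly identifying $R=(c^2+d^2)/2$, the role of $n_1\ge 1/2$ in making $g(t)=n_1 t-\ln\theta(t)$ monotone, and the unimodality and concavity arguments for $T=g^{-1}\circ\xi$; nothing further is needed.
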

	\begin{proof}
		Analogous to the proof of Theorem~\ref{nn:thm21>1/2}.
	\end{proof}
		
	\begin{theorem}\label{nn:thm12<1/2}
		(With notations in this section) Suppose $n_1< 1/2$. Let $R=\left(c^2+d^2\right)/2$ and $L=\sqrt{1-4n_1^2}-\ln\sqrt{\frac{1}{2n_1^2}-1+\frac{\sqrt{1-4n_1^2}}{2n_1^2}}$.\\
		i) If $R\le n_2$, there exists a unique pair $(t_{max},s_{max})$ satisfying the following equations 
		\[
		n_1t_{max}-\ln \sqrt{1+\frac{t_{max}^2}{2}+t_{max}\sqrt{1+\frac{t_{max}^2}{4}}}=-n_2s_{max}+\sinh^{-1}Rs_{max}=L.
		\]
		\begin{enumerate}[(a)]
		\item If $s_{max}\le t_{max}$, let $\tau_{1,2}=s_{max}$. 
		\item If $s_{max}>t_{max}$, let $\tau_{1,2}$ be the unique positive solution of \begin{eqnarray*}
			n_1t-\ln \sqrt{1+\frac{t^2}{2}+t\sqrt{1+\frac{t^2}{4}}}&=&-n_2t+\sinh^{-1}(Rt).
		\end{eqnarray*}		
	\end{enumerate}
		ii) If $R>n_2$, let $\tau_{1,2}$ be the unique positive solution of the above equation.\\
		Then the switched system~\eqref{eq:system} is stable for all $\sigma\in S_{\tau_{1,2}}$ and asymptotically stable for all $\sigma\in S_{\tau_{1,2}}'$. When $c\ne 0$, a lower $\tau_{1,2}$ can be obtained using $R_{opt}=c^2/2$ instead of $R$.
	\end{theorem}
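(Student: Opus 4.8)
The plan is to run the argument of Theorem~\ref{nn:thm21<1/2} essentially verbatim after interchanging the roles of the two subsystems. Starting from the first inequality in~\eqref{eq:tau}, $\left\|M_{D_1,D_2}^{-1}{\rm e}^{J_2 s}\,M_{D_1,D_2}{\rm e}^{J_1 t}\right\|<1$, I would, as in Section~\ref{sec:nnt21}, peel off the nilpotent part of ${\rm e}^{J_1 t}$ via the submultiplicative bound $\left\|M^{-1}{\rm e}^{J_2 s}M{\rm e}^{J_1 t}\right\|\le\left\|M^{-1}{\rm e}^{J_2 s}M{\rm e}^{-n_1 t}\right\|\,\theta(t)$, with $\theta$ as defined in Section~\ref{sec:nnt21}. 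A short computation shows $M^{-1}{\rm e}^{J_2 s}M={\rm e}^{-n_2 s}(I+sN)$ where $N=M^{-1}EM$ is a rank-one nilpotent with $\|N\|=c^2+d^2$ ($E$ being the nilpotent part of $J_2$), so that $\|I+sN\|=\theta\bigl(s(c^2+d^2)\bigr)$ and $\ln\theta(x)=\sinh^{-1}(x/2)$; the requirement then becomes the conjunction of the feasibility inequality $n_1 t-\ln\theta(t)+n_2 s>0$ and $f(t,s)<0$, where $f(t,s)=R\,s-\bigl\lvert\sinh\bigl(n_1 t+n_2 s-\ln\theta(t)\bigr)\bigr\rvert$ and $R=(c^2+d^2)/2$. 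This is~\eqref{nn:fts} under $t\leftrightarrow s$, $n_1\leftrightarrow n_2$, $K\leftrightarrow R$, so the function of Remark~\ref{funcgtheta} is replaced by $g(t)=n_1 t-\ln\theta(t)$; since $n_1<1/2$, $g$ is concave up, decreases on $(0,\sqrt{(1/n_1^2)-4}\,)$ and increases thereafter, attains its minimum $L$ at $t_0=\sqrt{(1/n_1^2)-4}$, and has a unique positive root $\tilde t$.

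Next I would characterise the zero set $Z=\{(t,s)\in Q_1\colon f(t,s)=0\}$ through $g(t)=F_\pm(s)$ with $F_\pm(s)=-n_2 s\pm\sinh^{-1}(R s)$: $F_-$ is decreasing, concave up, with $F_-(s)<F_+(s)<0$ for $s>0$, while $F_+$ is decreasing and concave down when $R\le n_2$ and increasing-then-decreasing when $R>n_2$, exactly mirroring Case~1 ($K\le n_1$) and Case~2 ($K>n_1$) of Theorem~\ref{nn:thm21<1/2}. Applying the implicit function theorem to $g(t)=F_\pm(s)$ yields $\mathcal{C}^1$ branches $Z^\pm$ of $Z$, each with a single turning point; the picture of $Z$ is that of Figure~\ref{nn:case1}, resp.\ Figure~\ref{zerosetnn2}, reflected across the line $s=t$. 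When $R\le n_2$ the upper branch $Z^+=F_+^{-1}\circ g$ is a graph over $t\in[0,\tilde t]$ which vanishes at the endpoints and peaks at $t_0$, so its highest point $(t_{\max},s_{\max})$ (with $t_{\max}=t_0$) solves $n_1 t_{\max}-\ln\theta(t_{\max})=-n_2 s_{\max}+\sinh^{-1}(R s_{\max})=L$; if $s_{\max}\le t_{\max}$ then $\mathcal{R}_{s_{\max}}=\{t,s>s_{\max}\}$ misses $Z$ and $\tau_{1,2}=s_{\max}$ is optimal, otherwise $Z^+$ crosses the diagonal and $\tau_{1,2}$ is the unique positive root of $g(t)=F_+(t)$. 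When $R>n_2$ one argues exactly as in Case~2 of Theorem~\ref{nn:thm21<1/2}: $Z$ splits $Q_1$ so that the branch $g(t)=F_-(s)$ lies in the bounded region, and $\tau_{1,2}$ is the unique positive root of $g(t)=F_+(t)$, uniqueness coming from $g(0)=F_+(0)=0$, $g'(0)<0<F_+'(0)$, $g$ concave up, $F_+$ concave down. In every subcase I would verify that $f<0$ on $\mathcal{R}_{\tau_{1,2}}$ (continuity plus $f(t,s)\to-\infty$ as $t\to\infty$) and that the feasibility curve $n_1 t-\ln\theta(t)+n_2 s=0$, on which $f\ge0$, lies in the bounded region cut off by $Z^+$ and $Z^-$, so that $n_1 t-\ln\theta(t)+n_2 s>0$ on $\mathcal{R}_{\tau_{1,2}}$ as well; stability on $S_{\tau_{1,2}}$ and asymptotic stability on $S_{\tau_{1,2}}'$ then follow from~\eqref{eq:newform-12} and Remark~\ref{rmk:stab}, equality in $f\le0$ on $\mathcal{R}_{\tau_{1,2}}$ occurring only at $(\tau_{1,2},\tau_{1,2})$. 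The sharpened bound $R_{opt}=c^2/2$ for $c\ne0$ comes from Remark~\ref{nn:optimal}: replacing the generalised eigenvector of $A_1$ by a translate adds a multiple of the first column of $M$ to its second, so $d\mapsto d+\gamma c$ (leaving $\det M$ unchanged), and $\gamma=-d/c$ forces the new $d$ to vanish, dropping $R$ to $c^2/2$.

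The calculus here is routine — the sign and concavity statements for $g$, $F_\pm$, and the composed branches $Z^\pm$, together with the bookkeeping of which endpoint controls $\tau_{1,2}$. The hard part, just as in Case~2 of Theorem~\ref{nn:thm21<1/2}, will be the geometry of $Z$ when $R>n_2$: because $F_+$ has a hump on top of the hump of $g$, the branch $g(t)=F_+(s)$ need not be a graph over either coordinate, so one must count, for each fixed $t$ and for each fixed $s$, the number of solutions in order to pin down that $Z$ has the claimed shape (Figure~\ref{zerosetnn2} reflected across $s=t$) and that the feasibility curve stays trapped in the enclosed region. Once this combinatorial picture is in place, reading off $\tau_{1,2}$ and invoking Remark~\ref{rmk:stab} is immediate.
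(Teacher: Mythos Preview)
Your proposal is correct and is exactly the approach the paper takes: its proof of this theorem consists of the single sentence ``Analogous to the proof of Theorem~\ref{nn:thm21<1/2}.''  You have faithfully carried out that analogy, correctly identifying the swap $t\leftrightarrow s$, $n_1\leftrightarrow n_2$, $K\leftrightarrow R=(c^2+d^2)/2$ that turns~\eqref{nn:fts} into the Schur function for the present case, and your auxiliary observations (e.g.\ $\ln\theta(x)=\sinh^{-1}(x/2)$, the explicit form of $M^{-1}EM$, and $t_{\max}=t_0$ being the minimiser of $g$) are all correct and in fact more explicit than what the paper provides.
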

	
	\begin{proof}
		Analogous to the proof of Theorem~\ref{nn:thm21<1/2}.
	\end{proof}
	
	\begin{remark}[Optimal choice of Jordan basis matrices $P_1$ and $P_2$]\label{nn:optimal}
		When $A$ is a planar defective with eigenvalue $-n<0$, the Jordan basis matrix corresponding to the Jordan form $P=\begin{pmatrix}
			-n&1\\ 0&-n
		\end{pmatrix}$ consists of an eigenvector and a generalized eigenvector. Suppose we make a choice $\vec{x}$ of an eigenvector and $\vec{j}$ of a generalized eigenvector such that $(A+nI)\vec{j}=\vec{x}$. Then for any $\epsilon$, $\vec{j}+\epsilon \mathbf{x}$ is also a generalized eigenvector. Hence choices for $P$ are $\begin{bmatrix} \vec{x} & \vec{j}+\epsilon\vec{x}	\end{bmatrix}$ and all its nonzero scalar multiples. 
	
	Coming back the two subsystems as was discussed in this section. For $i=1,2$, suppose $A_i$ be a planar defective matrix with Jordan form $\begin{pmatrix}
		-n_i &1\\ 0& -n_i
	\end{pmatrix}$. Suppose $P_i=\begin{bmatrix} \vec{x}_i & \vec{j}_i	\end{bmatrix}$ with $\det(P_i)=\pm 1$; $\widetilde{P}_i^{(\epsilon_i)}=\begin{bmatrix} \vec{x}_i & \vec{j}_i+\epsilon_i\vec{x}_i	\end{bmatrix}$, $M=P_2^{-1}P_1=\begin{pmatrix}
	a&b\\c&d
\end{pmatrix}$, and $\widetilde{M}(\epsilon_1,\epsilon_2)=\left(\widetilde{P}_2^{(\epsilon_2)}\right)^{-1}\widetilde{P}_1^{(\epsilon_1)}$. Calculating $\widetilde{K}(\epsilon_1,\epsilon_2)$ and $\widetilde{R}(\epsilon_1,\epsilon_2)$ corresponding to $\widetilde{M}(\epsilon_1,\epsilon_2)$ as in Theorems~\ref{nn:thm21>1/2},~\ref{nn:thm21<1/2},~\ref{nn:thm12>1/2}, and~\ref{nn:thm12<1/2}, one can observe that $\widetilde{K}$ is independent of $\epsilon_1$ and $\widetilde{R}$ is independent of $\epsilon_2$. Note that $c=0$ if and only if $\vec{x}_1\in \textup{Span}(\vec{x}_2)$. In this case, $\widetilde{K}$ and $\widetilde{R}$ are constant functions. 

The functions $\widetilde{K}$ and $\widetilde{R}$ attain the minimum value $K_{opt}$ and $R_{opt}$, respectively, where 
\[
K_{opt}=\begin{cases}
					a^2/2, & \vec{x}_1\in \textup{Span}(\vec{x}_2)\\ 
					c^2/2, & \text{otherwise} 
				\end{cases}, \ \ R_{opt}=\begin{cases}
				d^2/2, & \vec{x}_1\in \textup{Span}(\vec{x}_2)\\ 
				c^2/2, & \text{otherwise} 
			\end{cases}.
			\]
		  These values of $K_{opt}, R_{opt}$ will lower the bounds $\tau_{2,1}, \tau_{1,2}$, as the case may be, in Theorems~\ref{nn:thm21>1/2},~\ref{nn:thm21<1/2},~\ref{nn:thm12>1/2}, and~\ref{nn:thm12<1/2}.

		When $c\ne 0$, these optimal values $K_{opt}$ and $R_{opt}$ are attained at $\epsilon_1=-\det\begin{bmatrix}\vec{j}_1 & \vec{x}_2\end{bmatrix}/\det\begin{bmatrix} \vec{x}_1 & \vec{x}_2\end{bmatrix}$ and $\epsilon_2=-\det\begin{bmatrix} \vec{x}_1 & \vec{j}_2\end{bmatrix}/\det\begin{bmatrix} \vec{x}_1 & \vec{x}_2\end{bmatrix}$. For these choices of $\epsilon_i$, $\widetilde{P}_i^{(\epsilon_i)}$ are the optimal choices of Jordan basis matrices for subsystems $i=1,2$; that means, these give the least values for $\tau_{1,2}$ and $\tau_{2,1}$ by following the above procedure. Moreover $K_{opt}$ and $R_{opt}$ are independent of the choices of $P_1$ and $P_2$. 
	\end{remark}

	\section{$A_1$ is defective and $A_2$ has complex eigenvalues}\label{sec:NC}
	
	Suppose $A_1$ is a defective matrix with canonical form $J_1=\begin{pmatrix}
		-n_1 & 1\\0 & -n_1
	\end{pmatrix}$, for some $n_1>0$, and $A_2$ has complex eigenvalues with canonical form $J_2= \begin{pmatrix} -\alpha_2 & \beta_2\\ -\beta_2 & -\alpha_2\end{pmatrix}$, where $\alpha_2>0$ and $\beta_2\ne 0$. In this case, both $D_1$ and $D_2$ are identity.
	
	\subsection{Computing $\tau_{1,2}$}
	Note that
	\begin{eqnarray*}
		\left\|M^{-1} {\rm{e}}^{J_2 s}\, M{\rm{e}}^{J_1 t}\right\| &\le & \left\|M^{-1} {\rm{e}}^{J_2 s}\, M{\rm{e}}^{-n_1 t} \right\|\theta(t),
	\end{eqnarray*}
	where $\theta(t)=\begin{Vmatrix}\begin{pmatrix}
			1 & t\\ 0 & 1
		\end{pmatrix}
	\end{Vmatrix}=\sqrt{1+\frac{t^2}{2}+t\sqrt{1+\frac{t^2}{4}}}$ (the function $\theta$ was also used in Section~\ref{sec:NN}).

\noindent Observe that $\left\|M^{-1}{\rm{e}}^{J_2 s}\, M {\rm{e}}^{-n_1 t} \right\|\theta(t)<1$ if and only if 
	
	\begin{enumerate}
		\item[(C1)] $(t,s)$ lies in the feasible region $\alpha_2 s + n_1 t-\ln\theta(t)>0$, and
				\item[(C2)] $(t,s)$ satisfies the following inequality
				\begin{eqnarray*}
			\left(2\cos^2\beta_2 s +L\sin^2\beta_2 s \right)\,\theta(t)^2 {\rm e}^{-2 (\alpha_2 s + n_1 t)}<1+\theta(t)^4{\rm e}^{-4 (\alpha_2 s + n_1 t)},
		\end{eqnarray*}
		where $L=(a^2+b^2)^2+2(ac+bd)^2+(c^2+d^2)^2$. This inequality can be rewritten in a simplified form $f(t,s)<0$ where $f(t,s)$ equals
		\[
		\sqrt{\frac{(b^2-c^2)^2+(a^2-d^2)^2+2(ac+bd)^2+2(ab+cd)^2}{4}}\left\lvert \sin \beta_2 s\right\rvert-\sinh \left(\alpha_2 s+n_1t-\ln\theta(t)\right).
		\]
	\end{enumerate} 
	
	Recall the function $g(t)=n_1t-\ln\theta(t)$ defined in Section~\ref{sec:NN} and refer to Remark~\ref{funcgtheta} for properties of $g$.
	
	\begin{theorem}\label{nc:thm12}
		(With notations in this section) Let \begin{eqnarray*}
			K=\sqrt{\frac{(b^2-c^2)^2+(a^2-d^2)^2+2(ac+bd)^2+2(ab+cd)^2}{4}}.
		\end{eqnarray*}
		i) If $n_1\ge 1/2$, let $\tau_{1,2}$ be the unique solution of the equation $n_1t-\ln\theta(t)=-\alpha_2 t+\sinh^{-1}K$. \\
		ii) If $n_1<1/2$, let $t_0=\sqrt{(1/n_1^2)-4}$. 
		\begin{enumerate}[(a)]
	\item If $n_1t_0-\ln\theta(t_0)\le -\alpha_2 t_0+\sinh^{-1}K$, let $\tau_{1,2}$ be the unique solution of $n_1t-\ln\theta(t)=-\alpha_2 t+\sinh^{-1}K$. 
	\item Otherwise, let $\tau_{1,2}=-(1/\alpha_2)\left(n_1t_0-\ln\theta(t_0)-\sinh^{-1}K\right)$.
\end{enumerate}
	Then the switched system~\eqref{eq:system} is stable for all $\sigma\in S_{\tau_{1,2}}$ and asymptotically stable for all $\sigma\in S_{\tau_{1,2}}'$.
	\end{theorem}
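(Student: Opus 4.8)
The plan is to reduce the norm inequality to the two conditions (C1) and (C2) recorded above, and then to determine the smallest $\tau\ge 0$ for which both hold on $\mathcal{R}_\tau=\{(t,s):t,s>\tau\}$; stability on $S_{\tau_{1,2}}$ and asymptotic stability on $S'_{\tau_{1,2}}$ then follow from \eqref{eq:newform-12}, \eqref{eq:tau} and Remark~\ref{rmk:stab}, exactly as in the proofs of Theorems~\ref{rr:thm12}--\ref{rc:thm21}. The mechanism for (C2) is simple: writing $g(t)=n_1t-\ln\theta(t)$ as in Section~\ref{sec:NN}, since $\lvert\sin\beta_2 s\rvert\le 1$ and $\sinh$ is increasing, the Schur function $f(t,s)=K\lvert\sin\beta_2 s\rvert-\sinh\bigl(\alpha_2 s+g(t)\bigr)$ is strictly negative on the region $\{\alpha_2 s+g(t)>\sinh^{-1}K\}$ and vanishes on the boundary curve $\mathcal{C}_0\colon\alpha_2 s+g(t)=\sinh^{-1}K$ only at the isolated points where $\lvert\sin\beta_2 s\rvert=1$. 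Moreover $\alpha_2 s+g(t)\ge\sinh^{-1}K\ge 0$ already forces (C1), namely $\alpha_2 s+n_1 t-\ln\theta(t)>0$. Hence it suffices to find the least $\tau$ for which $\mathcal{R}_\tau$ lies on or above $\mathcal{C}_0$.

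For (i), $n_1\ge 1/2$: by Remark~\ref{funcgtheta}(3) the function $g$ is strictly increasing, so $h(t):=\alpha_2 t+g(t)$ increases from $h(0)=0$ to $\infty$ and $\mathcal{C}_0$ is the graph of a strictly decreasing function of $t$. The smallest square lying above a decreasing graph touches it only at its corner, so $\tau_{1,2}$ is the unique root of $h(\tau)=\sinh^{-1}K$, i.e. of $n_1 t-\ln\theta(t)=-\alpha_2 t+\sinh^{-1}K$; for $t,s>\tau_{1,2}$ one then has $\alpha_2 s+g(t)>h(\tau_{1,2})=\sinh^{-1}K$, so (C1) and (C2) hold strictly on $\mathcal{R}_{\tau_{1,2}}$.

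For (ii), $n_1<1/2$: by Remark~\ref{funcgtheta}(2) (with $n_2$ there replaced by $n_1$), $g$ decreases on $(0,t_0)$ and increases on $(t_0,\infty)$ for $t_0=\sqrt{1/n_1^2-4}$, and its global minimum $g(t_0)$ is negative, so $\mathcal{C}_0$ is unimodal with apex over $t=t_0$ at height $\alpha_2^{-1}\bigl(\sinh^{-1}K-g(t_0)\bigr)$. Over $\{t\ge\tau\}$ the infimum of $g$ is $g(\tau)$ if $\tau\ge t_0$ and $g(t_0)$ if $\tau<t_0$, which produces the dichotomy. If $h(t_0)=\alpha_2 t_0+g(t_0)\le\sinh^{-1}K$ — the displayed inequality of (ii)(a) — then, since $h$ is increasing on $[t_0,\infty)$, the root of $h(\tau)=\sinh^{-1}K$ is $\ge t_0$ and the corner-on-$\mathcal{C}_0$ prescription of case (i) already clears the apex, so $\tau_{1,2}$ is that root. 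Otherwise $h(t_0)>\sinh^{-1}K$, equivalently $\alpha_2^{-1}\bigl(\sinh^{-1}K-g(t_0)\bigr)<t_0$, and the binding position of $\mathcal{R}_\tau$ against $\mathcal{C}_0$ is the vertical line $t=t_0$; the least admissible $\tau$ is then the apex height, $\tau_{1,2}=\alpha_2^{-1}\bigl(\sinh^{-1}K-g(t_0)\bigr)=-\alpha_2^{-1}\bigl(n_1 t_0-\ln\theta(t_0)-\sinh^{-1}K\bigr)$, and since $t_0>\tau_{1,2}$ the square meets $\mathcal{C}_0$ only along its lower edge, with $\alpha_2 s+g(t)\ge\sinh^{-1}K$ throughout $\mathcal{R}_{\tau_{1,2}}$; (C1), (C2) follow and Remark~\ref{rmk:stab} gives the claim.

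The verifications that $g$ is convex, that $h'(t_0)=\alpha_2>0$, and that the relevant roots exist uniquely are routine and largely packaged in Remark~\ref{funcgtheta}. The genuinely delicate step is case (ii): one must see that, because $g$ dips below zero, the worst position of the square relative to $\mathcal{C}_0$ is not its corner but the line $t=t_0$, and then translate the resulting optimisation into the clean cutoff $h(t_0)\lessgtr\sinh^{-1}K$ that separates (a) from (b); one should also check, in order to apply the $S'_{\tau_{1,2}}$ part of Remark~\ref{rmk:stab}, that within $\bar{\mathcal{R}}_{\tau_{1,2}}$ the zero set of $f$ is met only on the boundary of the square, at points where $\lvert\sin\beta_2 s\rvert=1$.
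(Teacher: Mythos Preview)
Your proposal is correct and follows essentially the same approach as the paper. Both proofs bound the zero set of the Schur function $f(t,s)=K\lvert\sin\beta_2 s\rvert-\sinh(\alpha_2 s+g(t))$ from above by the curve $\mathcal{C}_0\colon \alpha_2 s+g(t)=\sinh^{-1}K$ (the paper writes this as $g(t)=F_+(s)$) via $\lvert\sin\beta_2 s\rvert\le 1$, observe that (C1) is automatic above $\mathcal{C}_0$, and then locate the smallest $\tau$ with $\mathcal{R}_\tau$ above $\mathcal{C}_0$ by exploiting the monotonicity of $g$ when $n_1\ge 1/2$ and its unimodal, convex shape with minimum at $t_0$ when $n_1<1/2$. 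Your use of $h(t)=\alpha_2 t+g(t)$ in place of the paper's $T(s)=g^{-1}(-\alpha_2 s+\sinh^{-1}K)$ and $F_\pm$ is a cosmetic repackaging; the geometry and the case split $h(t_0)\lessgtr\sinh^{-1}K$ are identical.
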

	
	\begin{proof}
		First consider the case when $n_1\ge 1/2$. We will find a function $T$ whose graph bounds the zero set $Z$ of the function $f(t,s)$ as shown in Figure~\ref{fig:nc12n>1/2}. The function $g$ is strictly increasing and hence $\alpha_2 s + n_1 t-\ln\theta(t)>0$, for all  $t,s>0$. This shows that (C1) is satisfied by all points of $Q_1$. Thus, we just need to look at (C2), that is, $f(t,s)<0$. Note that for each fixed $s$, there is a unique $t_s$ such that $g(t_s)=-\alpha_2 s+\sinh^{-1}(K\lvert\sin\beta_2 s\rvert)$.  Since the function $g$ is strictly increasing (from Remark~\ref{funcgtheta}), the graph of $t_s$ in the first quadrant lies below the graph of the function $T$ given by
		\begin{eqnarray*}
			T(s)&=&g^{-1}\left(-\alpha_2 s+\sinh^{-1}K\right).
		\end{eqnarray*} 
		Further $T$ is concave down and decreasing with $s>0$. Let $t_0$ be the unique fixed point of the function $T$. Then, the zero set $Z$ does not intersect with $\mathcal{R}_{t_0}=\{(t,s)\in Q_1\,\colon t,s>t_0\}$. Moreover, (C2) is satisfied in $\mathcal{R}_{t_0}$ and hence the result follows.
		\begin{figure}[h!]
			\centering
			\includegraphics[scale=0.2]{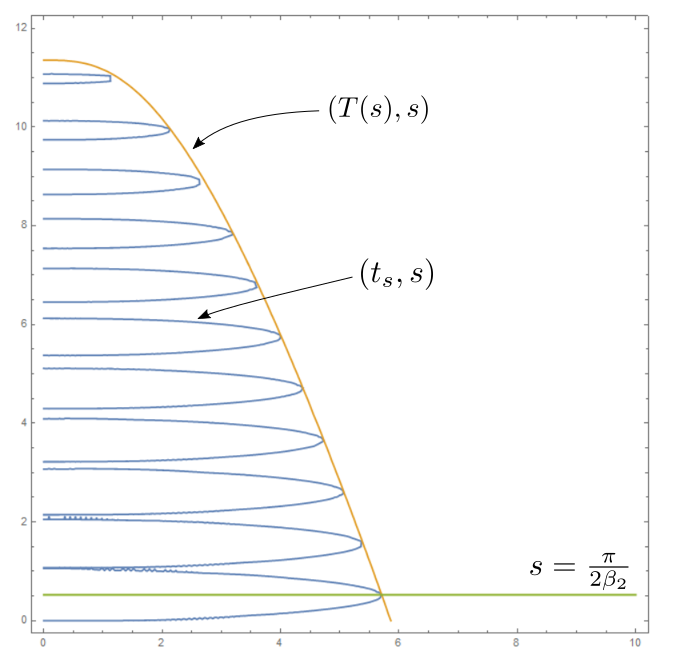}
			\caption{The zero set $Z$ when $n_1\ge 1/2$. The graph corresponds to the values $n_1=1/2$, $\alpha_2=0.1$, $\beta_2=3$ and $K=1.4$.}
			\label{fig:nc12n>1/2}
		\end{figure}
		
		Now, consider the case when $n_1<1/2$. We will prove that the zero set $Z$ can be bounded as shown in Figure~\ref{fig:nc12n<1/2}. Since $n_1< 1/2$, by Remark~\ref{funcgtheta}, $g(t)$ is concave up, decreasing on $(0,t_0)$ and increasing on $(t_0,\infty)$, where $t_0=\sqrt{(1/n_1^2)-4}$ (this $t_0$ is different from that in the previous case). Denote $g_{min}=-g(t_0)$. The zero set $Z$ lies in the region bounded by the graphs of $g(t)=F_+(s)$ and $g(t)=F_-(s)$, where $F_\pm(s)=-\alpha_2 s\pm \sinh^{-1} K$.
		
		For each $t$, there exists at most one solution $s_t^+\ge 0$ and at most one solution $s_t^-\ge 0$ such that $g(t)=F_\pm (s_t^\pm)$. Also note that the graph of $g(t)=F_+(s)$ lies above that of $g(t)=F_-(s)$. The following two situations arise depending on the value of $-g_{min}+\sinh^{-1} K$. If $-g_{min}+\sinh^{-1} K>0$, then the graph of $g(t)=F_-(s)$ does not lie in the first quadrant.
		
		\begin{figure}[h!]
			\centering
			\includegraphics[scale=0.35]{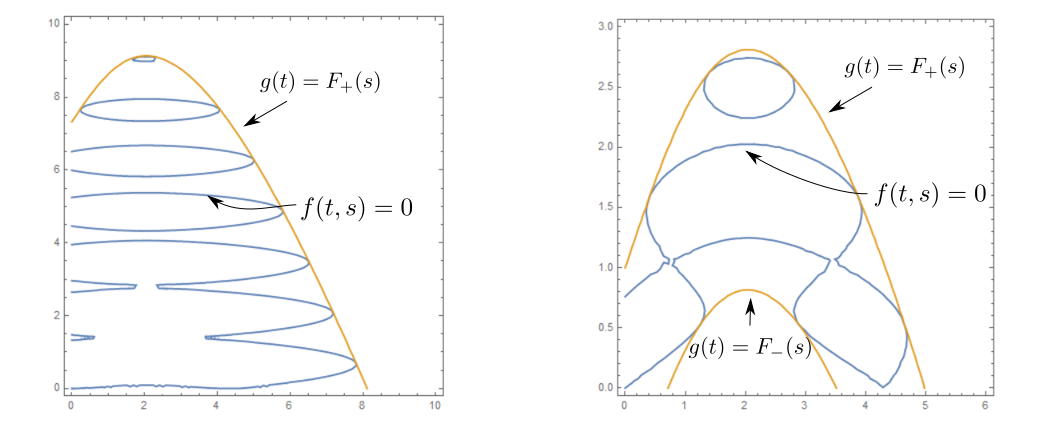}
			\caption{When $-g_{min}+\sinh^{-1}K>0$ (left) and when $-g_{min}+\sinh^{-1}K<0$ (right). The graphs correspond to the values $\alpha_2=0.1$, $n_1=0.35$ in both; $\beta_2=2.25$ and $K=0.8$ in the left plot; $\beta_2=3$ and $K=0.1$ in the right plot.}
			\label{fig:nc12n<1/2}
		\end{figure}
		
		In both of the cases, $f(t,s)<0$ on the unbounded region of the first quadrant partitioned by $g(t)=F_+(t)$. For any point $(t,s)$ lying on the curve $\alpha_2 s+n_1t-\ln\theta(t)=0$, $f(t,s)>0$. Hence the curve $\alpha_2 s+n_1t-\ln\theta(t)=0$ does not intersect with the unbounded region of the first quadrant bounded by the curve $g(t)=F_+(t)$. Thus if we find a region $\mathcal{R}_\tau=\{(t,s)\in Q_1\colon\ t,s>\tau\}$ where (C2) is satisfied, (C1) will automatically be satisfied.
				
		Thus, in both of the cases, $\tau_{1,2}$ can be computed by studying the solution set of $g(t)=F_+(s)$, which can be simplified as $s=-(1/\alpha_2)\left(g(t)-\sinh^{-1}K\right)$. The graph of $-(1/\alpha_2)\left(g(t)-\sinh^{-1}K\right)$ is concave down and attains its maximum at $t_0=\sqrt{(1/n_1^2)-4}$ (follows from Remark~\ref{funcgtheta}). Hence the result follows.
	\end{proof}
	
	\subsection{Computing $\tau_{2,1}$}
	\begin{theorem}\label{nc:thm21}
		(With notations in this section) Let $R=\left(a^2+c^2\right)/2$. \\
		i) The switched system~\eqref{eq:system} is stable for all signals $\sigma$ if $R\le n_1$. Let $\tau_{2,1}=0$.\\
		ii) If $R>n_1$, let $t_0=\sqrt{\frac{1}{n_1^2}-\frac{1}{R^2}}$ and $S(t_0)=-\frac{n_1}{\alpha_2}\, t_0 + \frac{1}{\alpha_2}\,\sinh^{-1}Rt_0$. 
		\begin{enumerate}[(a)]
		\item If $S(t_0)\le t_0$, let $\tau_{2,1}=S(t_0)$.
		\item If $S(t_0)\ge t_0$, let $\tau_{2,1}$ be the unique positive solution of $\sinh \left(n_1t+\alpha_2 t\right)=R t$.
	\end{enumerate}
		Then the switched system~\eqref{eq:system} is stable for all $\sigma\in S_{\tau_{2,1}}$ and asymptotically stable for all $\sigma\in S_{\tau_{2,1}}'$.
	\end{theorem}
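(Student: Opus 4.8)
The plan is to reduce the inequality $\bigl\|M{\rm e}^{J_1 t}M^{-1}{\rm e}^{J_2 s}\bigr\|<1$ to a single scalar condition and then to carry out the zero-set analysis exactly along the lines of the proof of Theorem~\ref{nn:thm21>1/2}. The situation here is in fact cleaner: since ${\rm e}^{J_2 s}$ equals ${\rm e}^{-\alpha_2 s}$ times an orthogonal matrix, there is no auxiliary function $\theta$ and no feasibility obstruction, and the role played by the increasing function $g(s)=n_2 s-\ln\theta(s)$ in Section~\ref{sec:NN} is played here simply by the linear function $\alpha_2 s$.

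First I would compute the norm. Writing ${\rm e}^{J_1 t}={\rm e}^{-n_1 t}\bigl(\begin{smallmatrix}1&t\\0&1\end{smallmatrix}\bigr)$, the matrix $N=M\bigl(\begin{smallmatrix}0&1\\0&0\end{smallmatrix}\bigr)M^{-1}$ is nilpotent with ${\rm tr}\,N=\det N=0$ and $\|N\|_F^2=(a^2+c^2)^2$, so $M\bigl(\begin{smallmatrix}1&t\\0&1\end{smallmatrix}\bigr)M^{-1}=I+tN$ has $\det(I+tN)=1$ and $\|I+tN\|_F^2=2+t^2(a^2+c^2)^2$. Since right multiplication by the orthogonal part of ${\rm e}^{J_2 s}$ leaves both the operator norm and $K^\top K$ invariant, for $K=M{\rm e}^{J_1 t}M^{-1}{\rm e}^{J_2 s}$ one gets $\det(K^\top K)={\rm e}^{-4(n_1 t+\alpha_2 s)}$, which is $<1$ throughout $Q_1$ (so the feasible region is all of $Q_1$), while Schur's function condition $|{\rm tr}(K^\top K)|-1-\det(K^\top K)<0$ simplifies, after multiplying through by ${\rm e}^{2(n_1 t+\alpha_2 s)}$, to $2+t^2(a^2+c^2)^2<2\cosh\bigl(2(n_1 t+\alpha_2 s)\bigr)$, i.e. to $f(t,s)<0$ where $f(t,s)=Rt-\sinh(n_1 t+\alpha_2 s)$ and $R=(a^2+c^2)/2$. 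Thus $\tau_{2,1}$ is the least $\tau\ge 0$ for which $f<0$ on $\mathcal R_\tau=\{(t,s):t,s>\tau\}$.

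For part (i), if $R\le n_1$ then for every $(t,s)\in Q_1$ we have $\sinh(n_1 t+\alpha_2 s)\ge\sinh(n_1 t)>n_1 t\ge Rt$ (using $t>0$), so $f<0$ on $Q_1$ and $\tau_{2,1}=0$; stability for all $\sigma$ then follows from Remark~\ref{rmk:stab}. For part (ii), $R>n_1$, I would describe the zero set $Z$ of $f$ in $Q_1$. Because $f_s=-\alpha_2\cosh(n_1 t+\alpha_2 s)<0$, for each $t$ there is at most one $s$ with $f=0$, namely $s=S(t):=\tfrac1{\alpha_2}\bigl(\sinh^{-1}(Rt)-n_1 t\bigr)$, and this is non-negative exactly for $t\in[0,\tilde t]$, where $\tilde t$ is the unique positive root of $Rt=\sinh(n_1 t)$. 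A direct calculation gives $S(0)=0$, $S'(0)=(R-n_1)/\alpha_2>0$, $S'(t)=0$ precisely at $t_0=\sqrt{1/n_1^2-1/R^2}$, and $S''<0$; hence $Z$ is a concave-down arc from $(0,0)$ to $(\tilde t,0)$ with a single peak at $(t_0,S(t_0))$, with $f>0$ on the bounded and $f<0$ on the unbounded component of $Q_1\setminus Z$ (the picture is as in Figure~\ref{fig:rc21}). If $S(t_0)\le t_0$, every point of $Z$ has second coordinate $\le S(t_0)$, so $Z$ misses $\mathcal R_{S(t_0)}$ and $f<0$ there; set $\tau_{2,1}=S(t_0)=-\tfrac{n_1}{\alpha_2}t_0+\tfrac1{\alpha_2}\sinh^{-1}(Rt_0)$. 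If $S(t_0)>t_0$, then necessarily $S'(0)>1$ (otherwise the concave-down arc would stay below the diagonal $s=t$, contradicting $S(t_0)>t_0$), so $Z$ lies above $s=t$ near the origin and, meeting the line $s=t$ at most twice, crosses it exactly once more at a point $(\tau,\tau)$ with $t_0<\tau<\tilde t$ (the arc being above $s=t$ at $t_0$ and below it at $\tilde t$); since $S$ decreases on $(t_0,\tilde t)$ we have $S(t)<S(\tau)=\tau$ for $t>\tau$, so $Z\cap\mathcal R_\tau=\varnothing$, $\overline{\mathcal R_\tau}\cap Z=\{(\tau,\tau)\}$, and $\tau$ is the unique positive solution of $S(\tau)=\tau$, i.e. of $\sinh\bigl((n_1+\alpha_2)t\bigr)=Rt$. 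In both subcases Remark~\ref{rmk:stab} gives stability on $S_{\tau_{2,1}}$ and asymptotic stability on $S'_{\tau_{2,1}}$.

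The matrix computations of the reduction step and the monotonicity/concavity facts about $S$ are routine. The one place that needs care is the geometric argument in part (ii): showing that when $S(t_0)>t_0$ the diagonal crossing $\tau$ lies to the right of the peak $t_0$, so that $\mathcal R_\tau$ really does avoid the whole arc $Z$; this is exactly where the concavity of $S$ and the elementary fact that a line meets a concave arc in at most two points are used. Everything else runs parallel to the proof of Theorem~\ref{nn:thm21>1/2}.
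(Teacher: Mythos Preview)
Your proof is correct and follows essentially the same approach as the paper's own proof: both reduce $\|M{\rm e}^{J_1 t}M^{-1}{\rm e}^{J_2 s}\|<1$ to the single scalar inequality $Rt<\sinh(n_1 t+\alpha_2 s)$, then analyze the zero set via the concave-down parametrization $S(t)=\tfrac{1}{\alpha_2}(\sinh^{-1}(Rt)-n_1 t)$ exactly as in Theorem~\ref{nn:thm21>1/2}. Your derivation of the Schur function via the Frobenius norm of $I+tN$ and the orthogonality of the rotation part of ${\rm e}^{J_2 s}$ is a clean way to obtain what the paper simply states; and your explicit justification that the diagonal crossing $\tau$ lies to the right of the peak $t_0$ (via $S'(0)>1$ and concavity) fills in a detail the paper leaves implicit.
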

	
	\begin{proof}
	Observe that $\left\|M {\rm{e}}^{J_1 t\,}M^{-1}{\rm{e}}^{J_2 s} \right\|<1$ if and only if 
	\begin{eqnarray*}
			\left(2+(a^2+c^2)^2\, t^2\right) {\rm e}^{-2 (\alpha_2 s + n_1 t)}<1+{\rm e}^{-4 (\alpha_2 s + n_1 t)},
		\end{eqnarray*}
	which can be simplified as $f(t,s)<0$, where $f(t,s)=Rt-\sinh \left(n_1t+\alpha_2 s\right)$.
	Clearly $f(0,0)=0$. When $s=0$, then $f(t,0)<0$ if and only if $\sinh n_1t>Rt$.
	The function $(\sinh n_1t)/t$ is increasing in $t$ and attains its minimum value $n_1$ at $t=0$. When $R\le n_1$, $f(t,0)<0$ for all $t>0$. Since $f_s(t,s)<0$, $f(t,s)<0$ for all $t,s>0$ and hence the result follows.
		
		When $R>n_1$, we will prove that the zero set $Z$ of $f(t,s)$ can be classified into the three cases as shown in Figure~\ref{fig:rc21}. Further, there exists a unique $\tilde{t}>0$ such that $f(\tilde{t},0)=0$. Also, $f(t,0)>0$ for $t\in (0,\tilde{t})$ and $f(t,0)<0$ for $t\in(\tilde{t},\infty)$. For each $t$, there exists at most one $s_t\ge 0$ such that $f(t,s_t)=0$ and it is given by $s_t=-(n_1/\alpha_2) t + (1/\alpha_2)\,\sinh^{-1}\left(Rt\right)$.
		Define $S\colon [0,\tilde{t}]\to\mathbb{R}$ as $S(t)=s_t$. Note $S(0)=S\left(\tilde{t}\right)=0$. This function parameterizes the zero set $Z$ in the first quadrant. We have analyzed a similar function~\eqref{zerochar2} earlier. For $t\ne 0$, $S''(t)<0$ and $S'(t)=-\frac{n_1}{\alpha_2}+\frac{R}{\alpha_2}\left(R^2t^2+1\right)^{-1/2}$. Hence, $\lim_{t\to 0}\,S'(t)>0$ and there exists a unique $t_0$ such that $S'(t_0)=0$. Thus the zero set $Z$ can be classified into the three cases as shown in Figure~\ref{fig:rc21}. The first two cases correspond to when $S(t_0)\le t_0$. In these cases, $Z$ does not intersect with the region $\mathcal{R}_{S(t_0)}=\{(t,s)\in\mathbb{R}^2\colon\, s,t>S(t_0)\}$ and hence the result follows. The third case corresponds to the case when $S(t_0)>t_0$. There is a unique root, say $\tau$, in the direction $s=t$ due to $S$ being concave down. Hence the zero set $Z$ does not intersect with the region $\mathcal{R}_\tau=\{(t,s)\in\mathbb{R}^2\colon\, s,t>\tau\}$ and the result follows. Moreover, $\tau$ is the unique positive solution of $\sinh \left(n_1t+\alpha_2 t\right)=Rt$.
	\end{proof}
	
	\begin{remark}[Further optimization of $\tau_{1,2}$ and $\tau_{2,1}$]\label{nc:optimization}
		We refer to Remark~\ref{nn:optimal} for notations. Suppose $P_1=\begin{bmatrix} \vec{x}_1 & \vec{j}_1	\end{bmatrix}$ and $\widetilde{P}_1^{(\epsilon_1)}=\begin{bmatrix} \vec{x}_1 & \vec{j}_1+\epsilon_1\vec{x}_1	\end{bmatrix}$. Then $\widetilde{M}(\epsilon_1)= P_2^{-1}\widetilde{P}_1^{(\epsilon_1)}=\begin{pmatrix}a &b+\epsilon_1 a\\c & d+\epsilon_1 c\end{pmatrix}$. Clearly $R$ is invariant under choice of $\epsilon_1$. For finding $K_{opt}$ (for Theorem~\ref{nc:thm12}), we can minimize the function $\widetilde{K}(\epsilon_1)$ corresponding to $\widetilde{M}(\epsilon_1)$. By using $K_{opt}$ instead of $K$ in Theorem~\ref{nc:thm12}, a lower $\tau_{1,2}$ can be obtained.
	\end{remark}
	
	\section{$A_1$ is defective and $A_2$ is real diagonalizable}\label{sec:NR}
	
	Suppose $A_1$ is a defective matrix with canonical form $J_1=\begin{pmatrix}
		-n_1 & 1\\0 & -n_1
	\end{pmatrix}$, for some $n_1>0$ and $A_2$ is a real diagonalizable matrix with canonical form $J_2= \begin{pmatrix} -p_2 & 0\\ 0 & -q_2\end{pmatrix}$, where $0<p_2<q_2$. We make a choice of matrices $P_1$ and $P_2$ such that the determinant of the transition matrix $M=P_2^{-1}P_1$ is $1$. Let $D_1,D_2$ be scaling matrices. In this section, we will assume that $D_1$ is the identity matrix; the general case will be discussed in Remark~\ref{nr:optimization}. We vary $D_2$ over matrices of the form $\text{diag}(\lambda_2,1/\lambda_2)$.
	
	\subsection{Computing $\tau_{1,2}$}
		Note that
	\begin{eqnarray*}
		\left\|M_{D_1,D_2}^{-1} {\rm{e}}^{J_2 s}\, M_{D_1,D_2}{\rm{e}}^{J_1 t}\right\| &\le & \left\|M_{D_1,D_2}^{-1} {\rm{e}}^{J_2 s}\, M_{D_1,D_2}{\rm{e}}^{-n_1 t} \right\|\theta(t),
	\end{eqnarray*}
	where $\theta(t)=\begin{Vmatrix}\begin{pmatrix}
			1 & t\\ 0 & 1
		\end{pmatrix}
	\end{Vmatrix}=\sqrt{1+\frac{t^2}{2}+t\sqrt{1+\frac{t^2}{4}}}$ (the function $\theta$ has also been used in earlier sections). Observe that $\left\|M_{D_1,D_2}^{-1}{\rm{e}}^{J_2 s}\, M_{D_1,D_2} {\rm{e}}^{-n_1 t} \right\|\theta(t)<1$ if and only if 
	\begin{enumerate}
		\item[(C1)] $(t,s)$ is contained in the feasible region $(p_2+q_2)s/2+g(t)>0$, and
		\item[(C2)] $(t,s)$ satisfies the inequality $f(t,s)<0$, where
		\begin{eqnarray*}
		f(t,s) &=& 	(a^2+b^2)(c^2+d^2)\,\sinh^2\left(\frac{q_2-p_2}{2}\right)s-\sinh^2 \left(\left(\frac{p_2+q_2}{2}\right)s+g(t) \right),
		\end{eqnarray*} 
		where $g(t)=n_1t-\ln\theta(t)$ (also used in Section~\ref{sec:NN}, the properties of $g$ are given in Remark~\ref{funcgtheta}).
	\end{enumerate}
	
	In the following two results, we compute $\tau_{1,2}$ separately when $0<n_1<1/2$ and when $n_2\ge 1/2$.
	
	\begin{theorem}\label{nr:thm12}
		(With notations in this section) Suppose $n_1\ge 1/2$. Let $K=(a^2+b^2)(c^2+d^2)$. \\
		i) If $1\le K\le ((q_2+p_2)/(q_2-p_2))^2$, the switched system~\eqref{eq:system} is stable for all switching signals $\sigma$. Let $\tau_{1,2}=0$.\\
		ii) If $K> ((q_2+p_2)/(q_2-p_2))^2$, calculate $s_0$ and $t=t_{s_0}$ given by the unique solutions of 
		\begin{eqnarray*}
			\left(1-\frac{1}{K}\right)\tanh^2\left(\frac{q_2-p_2}{2}\right)s_0&=&\left(\frac{q_2-p_2}{q_2+p_2}\right)^2-\frac{1}{K},\\
			n_1t-\ln \sqrt{1+\frac{t^2}{2}+t\sqrt{1+\frac{t^2}{4}}}&=&-\left(\frac{p_2+q_2}{2}\right)s_0+\sinh^{-1}\left(\sqrt{K}\sinh\left(\frac{q_2-p_2}{2}\right)s_0\right).
		\end{eqnarray*}
	\begin{enumerate}[(a)]
		\item If $t_{s_0}\le s_0$, let $\tau_{1,2}=t_{s_0}$. 
		\item Otherwise, let $\tau_{1,2}$ be the unique solution $t$ of 
		\begin{eqnarray*}
			n_1t-\ln \sqrt{1+\frac{t^2}{2}+t\sqrt{1+\frac{t^2}{4}}}&=&-\left(\frac{p_2+q_2}{2}\right)t+\sinh^{-1}\left(\sqrt{K}\sinh\left(\frac{q_2-p_2}{2}\right)t\right).
		\end{eqnarray*}
	\end{enumerate}
	Then the switched system~\eqref{eq:system} is stable for all $\sigma\in S_{\tau_{1,2}}$ and asymptotically stable for all $\sigma\in S_{\tau_{1,2}}'$. Moreover, a lower $\tau_{1,2}$ can be obtained on replacing $K$ by $K_{opt}$ as defined in Remark~\ref{nr:optimization}.
	\end{theorem}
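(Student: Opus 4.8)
The plan is to follow the template used for Theorems~\ref{rc:thm21} and~\ref{nn:thm21>1/2}: reduce the norm inequality to the single scalar condition $f(t,s)<0$, parameterize the zero set of $f$ in $Q_1$ by one concave function, and read off the optimal $\tau_{1,2}$ from how that curve sits relative to the diagonal $s=t$. First I would dispose of the feasibility condition (C1). Since $n_1\ge 1/2$, Remark~\ref{funcgtheta} gives that $g(t)=n_1t-\ln\theta(t)$ is strictly increasing with $g(0)=0$, so $g(t)\ge 0$ and hence $(p_2+q_2)s/2+g(t)>0$ on all of $Q_1$; thus only (C2), i.e.\ $f(t,s)<0$, needs study. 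Put $K=(a^2+b^2)(c^2+d^2)$; by the identity $(a^2+b^2)(c^2+d^2)=(ac+bd)^2+(ad-bc)^2$ and $\det M=1$ we get $K\ge 1$. As both hyperbolic-sine arguments in $f$ are positive on $Q_1$, $f(t,s)<0$ is equivalent to $\sqrt K\,\sinh\bigl((q_2-p_2)s/2\bigr)<\sinh\bigl((p_2+q_2)s/2+g(t)\bigr)$. On the $t$-axis $f(t,0)=-\sinh^2 g(t)<0$ for $t>0$; on the $s$-axis $f(0,s)<0$ iff $\sqrt K<\sinh((p_2+q_2)s/2)/\sinh((q_2-p_2)s/2)$, whose right side increases in $s$ with infimum $(q_2+p_2)/(q_2-p_2)$. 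Hence if $K\le((q_2+p_2)/(q_2-p_2))^2$ then $f(0,s)<0$ for all $s>0$, and since $f(\cdot,s)$ is strictly decreasing (as $g$ is increasing) with $f(t,s)\to-\infty$ as $t\to\infty$, $f<0$ throughout $Q_1$: this is part (i), $\tau_{1,2}=0$. Otherwise there is a unique $\tilde s>0$ with $f(0,\tilde s)=0$.

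Now assume $K>((q_2+p_2)/(q_2-p_2))^2$. For each $s\in[0,\tilde s]$, strict monotonicity of $f(\cdot,s)$ gives a unique $t_s\ge 0$ with $f(t_s,s)=0$, and solving yields $t_s=g^{-1}(\phi(s))$ with $\phi(s)=-((p_2+q_2)/2)s+\sinh^{-1}\bigl(\sqrt K\sinh((q_2-p_2)s/2)\bigr)$; the map $S(s):=g^{-1}(\phi(s))$, with $S(0)=S(\tilde s)=0$, parameterizes the entire zero set $Z$ of $f$ in $Q_1$ (for $s>\tilde s$ one has $f(t,s)<0$ for all $t\ge 0$). The two structural facts I would establish are: (a) $\phi$ is concave --- writing $v=\sinh^2((q_2-p_2)s/2)$ and $\psi(s)=\sinh^{-1}(\sqrt K\sinh((q_2-p_2)s/2))$, one computes $\psi'(s)^2=\tfrac14 K(q_2-p_2)^2(1+v)/(1+Kv)$, which is decreasing in $v$, hence in $s$, precisely because $K>1$, so $\psi'$ and therefore $\phi'=\psi'-(p_2+q_2)/2$ are decreasing; and (b) $g$ is convex and increasing (Remark~\ref{funcgtheta}), so $g^{-1}$ is increasing and concave, whence $S=g^{-1}\circ\phi$ is concave. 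Since $\phi'(0)=\tfrac{q_2-p_2}{2}\bigl(\sqrt K-\tfrac{q_2+p_2}{q_2-p_2}\bigr)>0$ and $\phi'(s)\to -p_2<0$, the concave $\phi$ (hence $S$) increases then decreases, with a unique interior maximum at $s_0$ satisfying $\phi'(s_0)=0$; clearing denominators and squaring this equation and using $(p_2+q_2)^2-(q_2-p_2)^2=4p_2q_2$ reduces it exactly to $(1-1/K)\tanh^2((q_2-p_2)s_0/2)=((q_2-p_2)/(q_2+p_2))^2-1/K$, and the value $t_{s_0}=g^{-1}(\phi(s_0))$ is precisely the second displayed equation in (ii).

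The endgame then mirrors Theorems~\ref{rc:thm21} and~\ref{nn:thm21>1/2}: the set $\{f\ge 0\}\cap Q_1$ is the bounded region lying to the left of the concave ``bump'' $Z$, and we need the least $\tau$ for which $\mathcal{R}_\tau=\{t,s>\tau\}$ is disjoint from it, i.e.\ $S(s)\le\tau$ for every $s>\tau$. If $t_{s_0}=\max S\le s_0$, then $\tau_{1,2}=t_{s_0}$ works ($Z$ touches $\partial\mathcal{R}_{t_{s_0}}$ only at the apex $(t_{s_0},s_0)$): this is case (a). If $t_{s_0}>s_0$, then $S(s)-s$ is strictly concave, vanishes at $s=0$, is positive at $s_0$ and negative at $\tilde s$, hence has a unique positive zero $\tau>s_0$; that $\tau$ solves $g(t)=\phi(t)$ --- the displayed equation of case (b), whose uniqueness also follows from convexity of $g-\phi$ and the fact that $(g-\phi)'(0)<0$ (a consequence of $t_{s_0}>s_0$ and concavity of $S$) --- and $\mathcal{R}_\tau$ is disjoint from $Z$. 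In either case $\bigl\|M_{D_1,D_2}^{-1}{\rm e}^{J_2 s}M_{D_1,D_2}{\rm e}^{J_1 t}\bigr\|\le\bigl\|M_{D_1,D_2}^{-1}{\rm e}^{J_2 s}M_{D_1,D_2}{\rm e}^{-n_1 t}\bigr\|\theta(t)<1$ on $\mathcal{R}_{\tau_{1,2}}$, with equality on the closure only at an isolated point, so \eqref{eq:newform-12} and Remark~\ref{rmk:stab} give stability on $S_{\tau_{1,2}}$ and asymptotic stability on $S_{\tau_{1,2}}'$. Since $(a^2+b^2)(c^2+d^2)$ is unchanged under $M_{D_1,D_2}=\text{diag}(\lambda_2,1/\lambda_2)^{-1}M$, no optimization over $D_2$ enters; replacing $K$ by the smaller $K_{opt}$ of Remark~\ref{nr:optimization} (optimizing the defective basis $P_1$) only lowers every threshold, yielding the final assertion.

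I expect the main obstacle to be facts (a)--(b), namely the concavity of the parametrizing curve $S=g^{-1}\circ\phi$, and especially the concavity of $\phi$: this is exactly where $K\ge 1$ (equivalently $\det M=\pm1$) is used, and every case distinction and uniqueness claim in the statement rests on it. A secondary nuisance is the borderline value $n_1=1/2$, where $g'(0)=0$, so the monotonicity and limit arguments for $S$ near $s=0$ must be phrased to accommodate $S'(0)=+\infty$.
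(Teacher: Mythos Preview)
Your argument is correct and follows essentially the same route as the paper: dispose of (C1) via monotonicity of $g$ when $n_1\ge 1/2$, analyse $f(0,s)$ to split into cases (i)/(ii), parameterize the zero set as $t_s=g^{-1}(\phi(s))$, locate its unique maximum at $s_0$ via $\phi'(s_0)=0$, and finish by comparing $t_{s_0}$ with $s_0$. Your write-up is in fact more complete than the paper's on the key structural point --- the paper asserts that $t_s$ is concave down but only proves the weaker fact that it has a unique maximum, whereas you actually supply the concavity of $\phi$ (via the monotonicity of $K(1+v)/(1+Kv)$ for $K>1$) together with the convexity of $g$ to conclude that $S=g^{-1}\circ\phi$ is concave; this is exactly what is needed for the uniqueness claim in case~(b).
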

	
	\begin{proof}
		The function $g(t)$ is increasing since $n_1\ge 1/2$, hence (C1) is satisfied for all $t,s>0$. Thus we need to just look at (C2). Observe that $f(0,0)=0$ and for $s\ne 0$, $f(0,s)=0$ if and only if \begin{eqnarray}\label{rnzerot=0}
			\sqrt{K}&=&\frac{\sinh((q_2+p_2)s/2)}{\sinh((q_2-p_2)s/2)}.
		\end{eqnarray}
		The function on the right is increasing in $s$ on the interval $[0,\infty)$ and attains its minimum value $(q_2+\nobreak p_2)/(q_2-\nobreak p_2)$ at $t=0$. Clearly, when $K\le ((q_2+p_2)/(q_2-p_2))^2$,~\eqref{rnzerot=0} has no nonzero solution. Hence $f(0,s)<0$ for all $s>0$, which further implies $f(t,s)<0$ for all $t,s>0$ since $f_t(t,s)<0$. Hence the result follows.
		
		When $K> ((q_2+p_2)/(q_2-p_2))^2$, we  will show that the zero set of $f(t,s)$ can take one of the three forms as shown in Figure~\ref{fig:nr12>}. First,~\eqref{rnzerot=0} has a unique positive solution, say $\tilde{s}$. Also, $f(0,s)<0$ for all $s>\tilde{s}$, which implies $f(t,s)<0$ for all $s>\tilde{s}$ and $t>0$. Further for each $s\in[0,\tilde{s}]$, there is a unique $t_s\ge 0$ such that $f(t_s,s)=0$, which is equivalent to
		\begin{eqnarray}\label{eq:gts}
			g(t_s)&=&-\left(\frac{p_2+q_2}{2}\right)s+\sinh^{-1}\left(\sqrt{K}\sinh\left(\frac{q_2-p_2}{2}\right)s\right).
		\end{eqnarray}
		We will now show that $t_s$ (as a function of $s$) on the interval $[0,\tilde{s}]$ is concave down and has a unique maxima. Operating both sides by $g^{-1}$ (which exists since $g$ is strictly increasing), note that $t_s$ is a differentiable function of $s$. An easy computation shows that the function on the right in~\eqref{eq:gts} is concave down on the interval $[0,\tilde{s}]$ and has a positive derivative at 0. Upon partially differentiating~\eqref{eq:gts} with respect to $s$ and using the fact that $g$ is increasing implies that $t_s$ has a maxima at $s_0\in(0,\tilde{s})$ given by the unique solution of 
		\begin{eqnarray*}
			\left(1-\frac{1}{K}\right)\tanh^2\left(\frac{q_2-p_2}{2}\right)s+\frac{1}{K}&=&\left(\frac{q_2-p_2}{q_2+p_2}\right)^2.
		\end{eqnarray*}
	Hence the result follows.
		\begin{figure}[h!]
			\centering
			\includegraphics[scale=0.15]{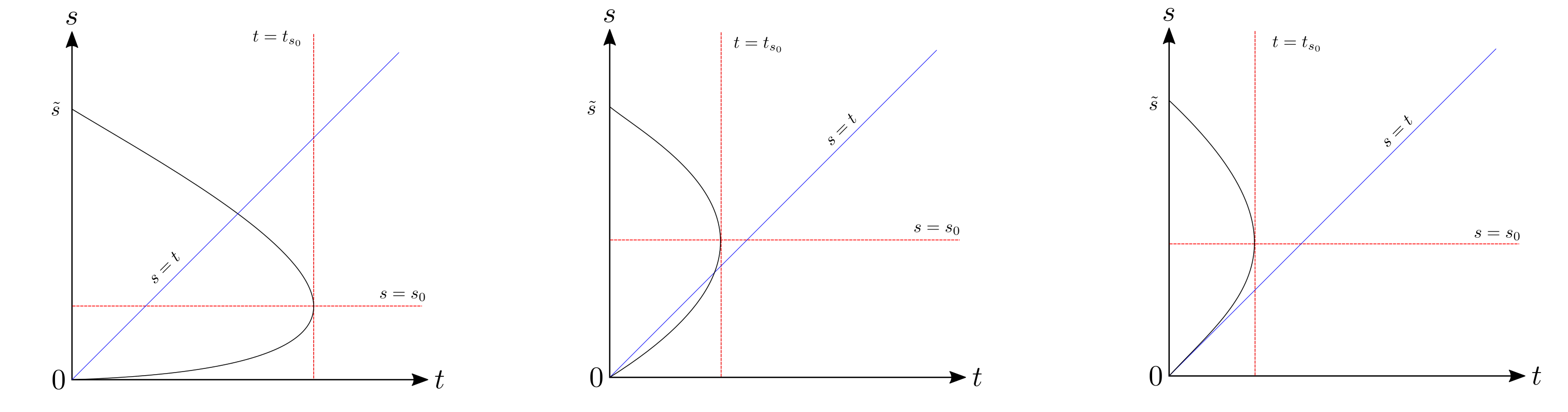}
			\caption{Zero set of $f(t,s)$: the first one corresponds to $s_0\le t_{s_0}$; the second and the third one correspond to $s_0>t_{s_0}$.}\label{fig:nr12>}
		\end{figure}
	\end{proof}

	\begin{theorem}\label{nr:thm12<1/2}
		(With notations in this section) Suppose $n_1< 1/2$. Let $K=(a^2+b^2)(c^2+d^2)$. \\
		i) If $K\le ((q_2+p_2)/(q_2-p_2))^2$, let $t_0=\sqrt{(1/n_1^2)-4}$ and let $s_{t_0}$ be the unique solution of 
		\begin{eqnarray*}
			n_1t_0-\ln \sqrt{1+\frac{t_0^2}{2}+t_0\sqrt{1+\frac{t_0^2}{4}}}&=&-\left(\frac{q_2+p_2}{2}\right)s_{t_0}+\sinh^{-1}\left(\sqrt{K}\sinh\left(\frac{q_2-p_2}{2}\right)s_{t_0}\right).
		\end{eqnarray*}
	\begin{enumerate}[(a)]
		\item If $s_{t_0}\le t_0$, let $\tau_{1,2}=s_{t_0}$.
		\item Otherwise let $\tau_{1,2}$ be the unique solution $t$ of 
		\begin{eqnarray}\label{eq:NR2}
			n_1t-\ln \sqrt{1+\frac{t^2}{2}+t\sqrt{1+\frac{t^2}{4}}}&=&-\left(\frac{q_2+p_2}{2}\right)t+\sinh^{-1}\left(\sqrt{K}\sinh\left(\frac{q_2-p_2}{2}\right)t\right).
		\end{eqnarray}
	\end{enumerate}
		ii) If $K> ((q_2+p_2)/(q_2-p_2))^2$, let $\tau_{1,2}$ be the unique solution of~\eqref{eq:NR2}.\\	Then the switched system~\eqref{eq:system} is stable for all $\sigma\in S_{\tau_{1,2}}$ and asymptotically stable for all $\sigma\in S_{\tau_{1,2}}'$. Moreover, a lower $\tau_{1,2}$ can be obtained on replacing $K$ by $K_{opt}$ as defined in Remark~\ref{nr:optimization}.
	\end{theorem}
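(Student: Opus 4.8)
The plan is to run the argument of Theorem~\ref{nn:thm21<1/2} with the roles of $t$ and $s$ interchanged; the one genuinely new feature is that, since $n_1<1/2$, the auxiliary function $g(t)=n_1t-\ln\theta(t)$ occurring inside $f$ is no longer monotone (Remark~\ref{funcgtheta}). I would first dispose of the feasibility condition (C1). On the curve $(p_2+q_2)s/2+g(t)=0$ the second square term of $f$ vanishes, so there $f(t,s)=K\sinh^2\!\big(\tfrac{q_2-p_2}{2}s\big)\ge 0$; hence the boundary of the infeasibility region lies in $\{f\ge 0\}\cap Q_1$, which we shall see is a bounded lobe. Thus it suffices to find the least $\tau\ge 0$ with $f<0$ on $\mathcal{R}_\tau=\{(t,s):t,s>\tau\}$: such an $\mathcal{R}_\tau$ cannot meet that boundary curve, and since $g(t)\to+\infty$ forces (C1) to hold for large $t$, connectedness of $\mathcal{R}_\tau$ then gives (C1) throughout, exactly as in the proofs of Theorems~\ref{nn:thm21<1/2} and~\ref{nc:thm12}.

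I would then describe the zero set $Z=\{(t,s)\in Q_1:f(t,s)=0\}$. On the $s$-axis, $f(0,s)=0$ is~\eqref{rnzerot=0}, which by monotonicity of $\sinh\!\big(\tfrac{q_2+p_2}{2}s\big)/\sinh\!\big(\tfrac{q_2-p_2}{2}s\big)$ has a positive root $\tilde{s}$ exactly when $K>((q_2+p_2)/(q_2-p_2))^2$; on the $t$-axis, $f(t,0)=-\sinh^2 g(t)$ vanishes only at $t=0$ and at the unique positive root $\tilde{t}$ of $g$. Off the axes, $f=0$ reads $g(t)=H_\pm(s)$ with $H_\pm(s)=-\tfrac{p_2+q_2}{2}s\pm\sinh^{-1}\!\big(\sqrt{K}\sinh\tfrac{q_2-p_2}{2}s\big)$; using $K\ge 1$ one checks that $H_-$ is decreasing and concave up, and that $H_+$ is concave down and is either decreasing from $0$ (when $K\le((q_2+p_2)/(q_2-p_2))^2$) or increasing-then-decreasing (otherwise). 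By Remark~\ref{funcgtheta}, $g$ is concave up, strictly decreasing on $(0,t_0)$ and strictly increasing on $(t_0,\infty)$, with minimum $g(t_0)=L$ at $t_0=\sqrt{1/n_1^2-4}$. Consequently, for each $s$ the equation $g(t)=H_+(s)$ has at most two solutions in $t$ (two for a value in $(L,0]$, one for $L$, none below), and similarly for $H_-$; applying the implicit function theorem on each branch, just as in Theorem~\ref{nn:thm21<1/2}, exhibits $Z$ as a union of concave arcs whose outer one comes from $H_+$. This outer arc attains its largest $s$-coordinate at $t=t_0$, where $s=s_{t_0}$ solves $H_+(s_{t_0})=L$ --- the point $(t_0,s_{t_0})$ of the statement --- and $\{f\ge 0\}\cap Q_1$ is the bounded lobe enclosed between the $H_+$- and $H_-$-arcs, so in particular the infeasibility region (which forces $t<\tilde{t}$ and is therefore bounded) sits inside that lobe.

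Finally I would extract $\tau_{1,2}$ from the shape of this lobe. It is contained in $\{s\le s_{t_0}\}$, and along its right-hand boundary --- the arc of $Z$ from $(\tilde{t},0)$ up to $(t_0,s_{t_0})$ --- the first coordinate stays $\ge t_0$. If $K\le((q_2+p_2)/(q_2-p_2))^2$ and $s_{t_0}\le t_0$, this boundary stays weakly below the diagonal $s=t$, so $\mathcal{R}_{s_{t_0}}$ misses the lobe while no smaller value does (since for each $\tau<s_{t_0}$ we have $(t_0,s_{t_0})\in\mathcal{R}_\tau$ and $f(t_0,s_{t_0})=0$); hence $\tau_{1,2}=s_{t_0}$. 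In the remaining cases --- $s_{t_0}>t_0$, or $K>((q_2+p_2)/(q_2-p_2))^2$ --- the boundary arc crosses the diagonal exactly once, at the unique positive solution of $g(t)=H_+(t)$, i.e. of~\eqref{eq:NR2}: unique because $g(t)-H_+(t)$ vanishes at $t=0$ with $g'(0)=n_1-\tfrac12<0$ and is concave up, so it returns to $0$ at most once on $(0,\infty)$. With $\tau_{1,2}$ so defined, $f<0$ on $\mathcal{R}_{\tau_{1,2}}$, hence (C1) there as well, so $\big\|M_{D_1,D_2}^{-1}{\rm{e}}^{J_2 s}M_{D_1,D_2}{\rm{e}}^{J_1 t}\big\|<1$ for $t,s>\tau_{1,2}$, and the asserted stability on $S_{\tau_{1,2}}$ and asymptotic stability on $S_{\tau_{1,2}}'$ follow from~\eqref{eq:newform-12},~\eqref{eq:tau} and Remark~\ref{rmk:stab} as in the earlier theorems; the improvement with $K_{opt}$ is the content of Remark~\ref{nr:optimization}. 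The main obstacle throughout is exactly the non-monotonicity of $g$: it makes $Z$ a genuinely two-valued curve, so one must build the implicit-function parametrizations of both branches by hand and verify that the infeasibility set never escapes the bounded lobe of $\{f\ge 0\}$; beyond that, everything is the same concavity/monotonicity bookkeeping as in the case $n_1\ge 1/2$ (Theorem~\ref{nr:thm12}).
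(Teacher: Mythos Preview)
Your proposal is correct and follows essentially the same route as the paper: rewrite the zero set of $f$ as $g(t)=H_\pm(s)$ (the paper's $F_\pm$), use the concavity/monotonicity of $g$ and $H_\pm$ together with the implicit function theorem to see that the outer boundary (the $H_+$-arc) is a concave lobe with highest $s$-value at $t=t_0$, and then read off $\tau_{1,2}$ according to whether $(t_0,s_{t_0})$ lies above or below the diagonal; the feasibility region argument and the final appeal to Remark~\ref{rmk:stab} are also the same. The one cosmetic difference is that the paper parametrizes the $H_+$-arc as $s=s_t^+$ (using that $H_+$ is strictly decreasing in case~(i)) and reads off the maximum directly, whereas you parametrize in the other direction; both are fine.

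One small slip: in your uniqueness argument for the diagonal crossing you write ``$g'(0)=n_1-\tfrac12<0$'' as if that alone gave $(g-H_+)'(0)<0$. In case~(ii) this is harmless since $H_+'(0)>0$, but in case~(i)(b) one has $H_+'(0)\le 0$, so the sign of $(g-H_+)'(0)$ is not immediate. The fix is already implicit in your hypotheses: since $H_+$ is strictly decreasing in case~(i) and $s_{t_0}>t_0$, one has $H_+(t_0)>H_+(s_{t_0})=g(t_0)$, so $g(t_0)-H_+(t_0)<0$; together with convexity of $g-H_+$ and its vanishing at $0$, this gives exactly one positive root.
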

	
	\begin{proof}
		From the second condition (C2), the zero set of $f(t,s)$ can be simplified as $g(t)=F_\pm(s)$, where 
		\begin{eqnarray*}
			F_\pm(s)&=& -(p_2+q_2)s/2\pm\sinh^{-1}\left(\sqrt{K}\sinh\left((q_2-p_2)s/2\right)\right).
		\end{eqnarray*}
		Notice that $g(t)$ can assume both positive and negative values since $n_1<1/2$ (Remark~\ref{funcgtheta}). 
		
		If $K< ((q_2+p_2)/(q_2-p_2))^2$, $F_+$ is concave down and since $F_+'(0)<0$, $F_+'(s)<0$ on $(0,\infty)$. Similarly, $F_-$ is concave up and $\lim_{s\to\infty}F_-'(s)=-q_2$, hence $F_-'(s)<0$ on $(0,\infty)$. Also $F_-(s)<F_+(s)$. Refer to Figure~\ref{NRfpfm}.
		
				\begin{figure}[h!]
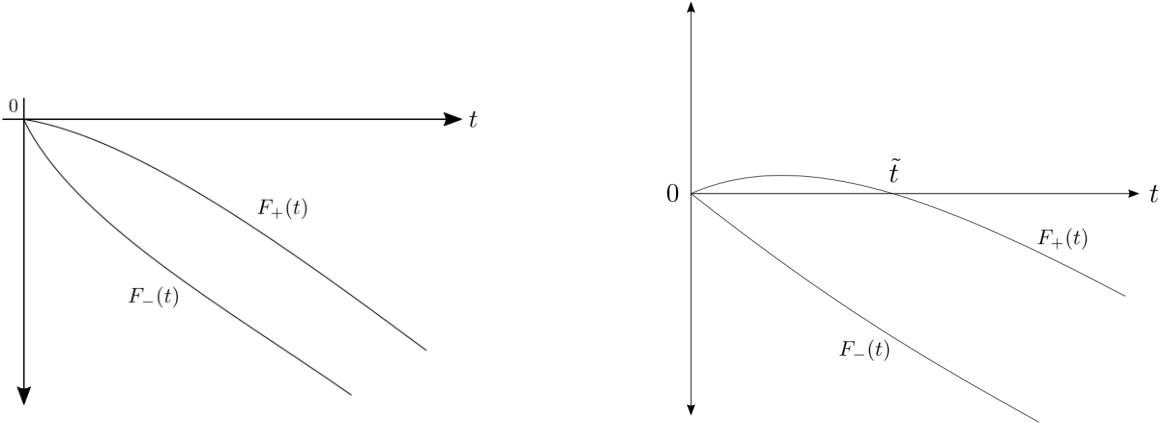

			\centering
			\includegraphics[scale=0.6]{FplusandFminus.png}\hfill
			\includegraphics[scale=0.4]{fpfmnn.png}
			\caption{Graphs of $F_+$, $F_-$ when $K< ((q_2+p_2)/(q_2-p_2))^2$ (left) and when $K> ((q_2+p_2)/(q_2-p_2))^2$ (right).}\label{NRfpfm}
		\end{figure}
		
		Arguing as in the proof of Theorem~\ref{nn:thm21<1/2}, $\tau_{1,2}$ can be obtained by studying the graph of $g(t)=F_+(s)$ since the curves $g(t)=F_-(s)$ and $((p_2+q_2)/2) s+g(t)$ do not intersect with the unbounded region of the first quadrant partitioned by the curve $g(t)=F_+(t)$. For $s=0$, $f(t,0)=0$ if and only if $g(t)=F_+(0)=0$ if and only if $t\in\{0,\tilde{t}\}$ where $\tilde{t}$ denotes the largest positive zero of $g$. For $t>\tilde{t}$, $g(t)>F_+(0)$, hence $g(t)>F_+(s)$ for all $s\ge 0$ (since $F_+$ is decreasing) and thus $f(t,s)<0$ for all $t>\tilde{t}$ and $s\ge 0$. For $t\in [0,\tilde{t}]$, there exists unique $s_t^+\ge 0$ such that $g(t)=F_+(s_t^+)$. Since $F_+'(s)<0$ for all $s>0$, it has a $\mathcal{C}^1$ inverse. Thus $s_t^+$ is differentiable in $t\in[0,\tilde{t}]$. Further $s_t^+$ attains maximum at $t_0=\sqrt{(1/n_1^2)-4}$ and is concave down (since $g$ is concave up) and $F_+'(s)<0$ for all $s\ge 0$. Hence the result follows.
		
		If $K> ((q_2+p_2)/(q_2-p_2))^2$, then $F_-(s)$ behaves as in the previous case, but $F_+(s)$ is concave down and has a unique maxima, refer to Figure~\ref{NRfpfm}. Arguing as in the proof of Theorem~\ref{nn:thm21<1/2}, we obtain the result. 
	\end{proof}
		
	\subsection{Computing $\tau_{2,1}$}
	\begin{theorem}\label{nr:thm21}
		(With notations in this section)\\ 
		i) If $|ac|\le n_1$, the switched system~\eqref{eq:system} is stable for all signals $\sigma$. Let $\tau_{2,1}=0$.\\
		ii) If $\lvert ac\rvert >n_1$ calculate the unique nonzero solution $(t_0,S(t_0))$ satisfying the system \begin{eqnarray*}
		act \cosh\left(\left(\frac{q_2-p_2}{2}\right)s\right)&=&\sinh\left(\frac{q_2-p_2}{2}\right)s+ sgn(ac)\, \sinh\left(\left(\frac{q_2+p_2}{2}\right)s+n_1 t\right),\\
		t &=& -\left(\frac{q_2+p_2}{2n_1}\right)s+\frac{1}{n_1}\cosh^{-1}\left(\frac{\lvert ac\rvert}{n_1}\, \cosh\left(\left(\frac{q_2-p_2}{2}\right)s\right)\right).
	\end{eqnarray*}
	\begin{enumerate}[(a)]
	\item If $S(t_0)\le t_0$, let $\tau_{2,1}=S(t_0)$.
	\item If $S(t_0)> t_0$, let $\tau_{2,1}$ be the unique solution $t\in(t_0,\infty)$ of \[
	act \cosh\left(\left(\frac{q_2-p_2}{2}\right)t\right)=\sinh\left(\frac{q_2-p_2}{2}\right)t+ sgn(ac)\, \sinh\left(\left(\frac{q_2+p_2}{2}+n_1\right)t\right).
	\]
	\end{enumerate}
		Then the switched system~\eqref{eq:system} is stable for all $\sigma\in S_{\tau_{2,1}}$ and asymptotically stable for all $\sigma\in S_{\tau_{2,1}}'$.
	\end{theorem}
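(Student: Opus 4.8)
The plan is to follow the template of the earlier $\tau_{2,1}$ computations (Theorems~\ref{nc:thm21},~\ref{rc:thm21},~\ref{nn:thm21>1/2}): reduce $\|M_{D_1,D_2}{\rm e}^{J_1 t}M_{D_1,D_2}^{-1}{\rm e}^{J_2 s}\|<1$ to a Schur inequality $f(t,s)<0$, describe the zero set $\mathcal{C}=\{f=0\}\cap Q_1$ as a concave graph, and read off $\tau_{2,1}$ from its highest point relative to the diagonal. Since $D_1$ is the identity and $\det M=1$, a direct computation gives the exact conjugation
\[
M_{D_1,D_2}\,{\rm e}^{J_1 t}\,M_{D_1,D_2}^{-1}={\rm e}^{-n_1 t}\begin{pmatrix}1-act & a^2 t/\lambda_2^2\\ -c^2\lambda_2^2\, t & 1+act\end{pmatrix},
\]
whose diagonal $1\pm act$ is scaling-invariant. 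Multiplying on the right by ${\rm e}^{J_2 s}=\text{diag}({\rm e}^{-p_2 s},{\rm e}^{-q_2 s})$ one finds $\det(K^\top K)={\rm e}^{-4n_1 t-2(p_2+q_2)s}$, which is $<1$ throughout $Q_1$, so the feasible region contains $Q_1$ and only the trace condition $\text{tr}(K^\top K)<1+\det(K^\top K)$ is at issue.

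First I would carry out the scaling optimization exactly as for the function $k$ introduced in Section~\ref{sec:RR}: for fixed $(t,s)$ the value of $\lambda_2$ minimizing $\text{tr}(K^\top K)$ solves $\lambda_2^{4}=(a^2/c^2){\rm e}^{-(q_2-p_2)s}$, which depends on $s$ alone, and substituting it defines $k(t,s)\le f(\lambda_2,t,s)$ for every fixed $\lambda_2$. Completing the square in $act$ and using the identities for $\cosh x\pm\cosh y$ and $\sinh x\pm\sinh y$ collapses $\text{tr}(K^\top K)-1-\det(K^\top K)$ to the form
\[
k(t,s)<0\ \Longleftrightarrow\ \bigl|act\cosh\tfrac{(q_2-p_2)s}{2}-\sinh\tfrac{(q_2-p_2)s}{2}\bigr|<\sinh\bigl(\tfrac{(q_2+p_2)s}{2}+n_1 t\bigr),
\]
so that, after removing the absolute value (the binding branch being selected by $\text{sgn}(ac)$), the zero set $\mathcal{C}$ is exactly the curve in the statement; in particular $k(t,0)<0\iff|ac|\,t<\sinh(n_1 t)$. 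For (i), when $|ac|\le n_1$ the function $(\sinh n_1 t)/t$ is increasing with infimum $n_1$, so $k(t,0)\le 0$ for all $t\ge0$; taking the fixed constant $\lambda_2^{2}=|a/c|$ (a crude choice of $\lambda_2$ works in the degenerate case $ac=0$) one checks directly that $\|M_{D_1,D_2}{\rm e}^{J_1 t}M_{D_1,D_2}^{-1}\|<1$ for every $t>0$, and since $\|M_{D_1,D_2}{\rm e}^{J_1 t}M_{D_1,D_2}^{-1}{\rm e}^{J_2 s}\|\le{\rm e}^{-p_2 s}\|M_{D_1,D_2}{\rm e}^{J_1 t}M_{D_1,D_2}^{-1}\|$, the norm stays below $1$ throughout $Q_1$, giving stability for all $\sigma$ with $\tau_{2,1}=0$.

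For (ii), when $|ac|>n_1$ there is a unique $\tilde t>0$ with $k(\tilde t,0)=0$, and a monotonicity argument as in Section~\ref{sec:RR} produces, for each $t\in[0,\tilde t\,]$, a unique $s_t\ge0$ with $k(t,s_t)=0$, so $\mathcal{C}$ is the graph of $S(t)=s_t$ with $S(0)=S(\tilde t)=0$. Implicit differentiation of the zero-set equation and setting $S'=0$ yields precisely the second equation in the system of part~(ii), whence the unique interior maximum $(t_0,S(t_0))$; and, as in the proof of Theorem~\ref{nc:thm21}, $S''<0$ on $(0,\tilde t)$, so $\mathcal{C}$ is one of the three shapes of Figure~\ref{fig:rc21}. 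When $S(t_0)\le t_0$ the quarter-plane $\mathcal{R}_{S(t_0)}$ on which $t,s>S(t_0)$ is disjoint from $\mathcal{C}$; when $S(t_0)>t_0$ one shows, as for Lemma~\ref{rr:direction:k}, that $k(t,t)=0$ has a unique root $\tau>t_0$ --- the equation in part~(ii)(b) --- and that $\mathcal{R}_\tau$ is disjoint from $\mathcal{C}$. In either case let $\tau_{2,1}$ be as defined in (ii)(a) or (ii)(b) accordingly, and choose the fixed constant $\lambda_2=\lambda_2(\tau_{2,1})$; then $f(\lambda_2,t,s)=k(t,s)$ along the horizontal line $s=\tau_{2,1}$, and because $\|M_{D_1,D_2}{\rm e}^{J_1 t}M_{D_1,D_2}^{-1}{\rm e}^{J_2 s}\|$ is strictly decreasing in $s$ (write ${\rm e}^{J_2 s}={\rm e}^{J_2\tau_{2,1}}{\rm e}^{J_2(s-\tau_{2,1})}$ and note $\|{\rm e}^{J_2\zeta}\|={\rm e}^{-p_2\zeta}<1$ for $\zeta>0$), the inequality $f(\lambda_2,\cdot)<0$ propagates to all of $\mathcal{R}_{\tau_{2,1}}$, with equality possible at most at the single corner. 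Stability on $S_{\tau_{2,1}}$ and asymptotic stability on $S'_{\tau_{2,1}}$ then follow from~\eqref{eq:newform-21} and Remark~\ref{rmk:stab}.

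The main obstacle is the middle step: verifying that the pointwise-optimal scaling depends on $s$ alone, that the square completion collapses the Schur function to the single hyperbolic form above (the $\text{sgn}(ac)$ branch bookkeeping being the delicate point), and that the parametrization $S$ of $\mathcal{C}$ is concave with a unique critical point --- the last of these being what makes the three-case dichotomy, and hence the closed-form expression for $\tau_{2,1}$, go through.
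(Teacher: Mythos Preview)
Your approach is essentially the paper's: the same scaling optimization $\lambda_2^4=(a^2/c^2)e^{-(q_2-p_2)s}$, the same collapse of the Schur condition to the single hyperbolic identity, and the same freeze-$\lambda_2$-at-$\tau_{2,1}$ step to pass from $k$ back to $f$. Your derivation of the two equations in~(ii) by implicit differentiation of the zero-set relation is also exactly what the paper does (the paper writes $(\partial R_\pm/\partial t)=0$ in a different form, but on $\mathcal{C}$ it reduces to the equation you obtain).

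The one real gap is the geometry of $S$. You assert $S''<0$ ``as in the proof of Theorem~\ref{nc:thm21}'', but that analogy does not transfer: in Theorem~\ref{nc:thm21} the function $S$ was explicit and $S''<0$ was a one-line computation, whereas here $S$ is defined implicitly and concavity is neither proved nor claimed in the paper. In fact the paper allows \emph{four} shapes for $\mathcal{C}$ (Figures~\ref{fig:nrcase12} and~\ref{fig:nrcase34}) rather than the three of Figure~\ref{fig:rc21}, and the restriction ``$t\in(t_0,\infty)$'' in~(ii)(b) is there precisely because a second diagonal root in $(0,t_0)$ is not excluded. What the paper establishes instead, via the level-set functions $R_\pm$ (the analogues of $\ell_\pm$ in Section~\ref{sec:RR}), is that $(\partial R_\pm/\partial s)$ keeps a fixed sign, so $S$ is well defined on $[0,\tilde t\,]$, and that for each fixed $s_0$ the map $t\mapsto R_\pm(t,s_0)$ has a unique critical point, so every horizontal line meets $\mathcal{C}$ at most twice. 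These two facts force $S$ to have a unique maximum $t_0$ and to be monotone on each of $[0,t_0]$ and $[t_0,\tilde t\,]$, which is all the case split (a)/(b) actually needs. Replace your concavity claim by this $R_\pm$-monotonicity argument and the rest of your proof goes through unchanged.
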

	
	\begin{proof}
		We have $\left\|M_{D_1,D_2} {\rm{e}}^{J_1 t\,}M_{D_1,D_2}^{-1}{\rm{e}}^{J_2 s} \right\|<1$ if and only if the Schur's function $f(\lambda_2,t,s)<0$ where $f(\lambda_2,t,s)$ equals
		\begin{align*}
		\frac{1}{2}\left(c^4\lambda_2^4{\rm{e}}^{(q_2-p_2) s}+\frac{a^4}{\lambda_2^4} {\rm{e}}^{-(q_2-p_2)s}\right) t^2+\left(1+a^2c^2t^2\right) \cosh((q_2-p_2) s)\\
		-2act \sinh((q_2-p_2) s)-\cosh((p_2+q_2) s+2 n_1 t).
		\end{align*}
		
		When $\lvert a c\rvert=0$, the switched system~\eqref{eq:system} is stable for all signals $\sigma$. This can be observed by taking $a=0$ without loss of generality. The condition $f(\lambda_2,t,s)<0$ reduces to \[
		\lambda_2^4<\frac{4\, \sinh (q_2s+n_1t)\, \sinh (p_2s+n_1t)}{c^4\, {\rm{e}}^{(q_2-p_2) s}}=m(t,s).
		\]
		Since $(\partial m/\partial s)(t,s)>0$ for $t,s>0$. The above expression is equivalent to $\lambda_2^4<m(t,0)=4c^{-4}t^{-2}\sinh^2n_1 t$, for all $t>0$. Since $m(t,0)$ is increasing in $t$ with $\lim_{t\to 0}m(t,0)=4n_1^2c^{-4}$. Taking any $\lambda_2<\sqrt{4n_1^2c^{-4}}$, we have $f(\lambda_2,t,s)<0$ for all $t,s>0$. Thus we have proved the claim.\\		
		When both $a$ and $c$ are nonzero, we define $k(t,s)=f\left(\sqrt[4]{\frac{a^2}{c^2}{\rm{e}}^{-(q_2-p_2)s}},t,s\right)$. Then \[
		k(t,s)=a^2c^2t^2+\left(1+a^2c^2t^2\right)\, \cosh((q_2-p_2) s)-2act\, \sinh((q_2-p_2) s)-\cosh((p_2+q_2) s+2 n_1 t).
		\] 	
		We will prove that the zero set $\mathcal{C}=\{(t,s)\in Q_1\colon\ k(t,s)=0\}$ can be classified into four cases, as shown in Figures~\ref{fig:nrcase12} and~\ref{fig:nrcase34}. We will then use arguments similar to those in Remark~\ref{zeroset} to compute $\tau_{2,1}$. First notice that $\mathcal{C}=\{(t,s)\in Q_1\colon\ ac=R_\pm(t,s)\}$, where \[
		R_\pm(t,s)=\frac{1}{t} \sech\left(\left(\frac{q_2-p_2}{2}\right)s\right)\left[\sinh\left(\frac{q_2-p_2}{2}\right)s\pm \sinh\left(\left(\frac{q_2+p_2}{2}\right)s+n_1 t\right)\right].
		\]
		The following observations can be made about the functions $R_\pm$: \begin{enumerate}
			\item $R_+(t,s)>0$ and $R_-(t,s)<0$ for all $t,s>0$. Thus, if $ac$ is positive (negative, respectively) the zero set of $k$ is given by the points $(t,s)\in Q_1$ satisfying $ac=R_+(t,s)$ ($ac=R_-(t,s)$, respectively).
			\item $R_+(t,0)$ is an increasing function of $t$, thus there exists a unique $\tilde{t}>0$ such that $R_+ (\tilde{t},0)=ac$ whenever $a c>n_1$. When $0<ac\le n_1$, we have $ac<R_+(t,0)$ for all $t>0$.\\ 
			Similarly $R_-(t,0)$ is decreasing in $t$, thus there exists a unique $\tilde{t}>0$ such that $R_- (\tilde{t},0)=ac$ whenever $a c<-n_1$. If $-n_1\le ac<0$, then $R_-(t,0)<ac$ for all $t>0$. 
			
			\item $(\partial R_+/\partial s)(t,s)>0$ and $(\partial R_-/\partial s)(t,s)<0$ for all $t,s>0$. Combining with the previous observation, we can conclude that when $\lvert ac\rvert\le n_1$, $R_-(t,s)<ac<R_+(t,s)$ for all $t,s>0$. Thus, $\tau_{2,1}=0$. When $\lvert ac\rvert>n_1$, the zero set of $k$ lies entirely in the strip $0\le t\le \tilde{t}$.\\
			Similar to the proof of Lemma~\ref{rr:zero:IFT}, we can use the above observation to parameterize the zero set of $k$ in the closed first quadrant as the graph of a $\mathcal{C}^1$ function $S:[0,\tilde{t}]\to \mathbb{R}$ using the implicit function theorem.
			\item For each fixed $s_0>0$, there is a unique $t_0^\pm>0$ such that $(\partial R_\pm/\partial t)(t_0^\pm,s_0)=0$. Moreover,
			\begin{enumerate}
				\item $R_+(t,s_0)$ is decreasing on $(0,t_0^+)$ and increasing on $(t_0^+,\infty)$ with $\lim_{t\to 0}R_+(t,s_0)=\lim_{t\to\infty}R_+(t,s_0)=\infty$.
				\item $R_-(t,s_0)$ is increasing on $(0,t_0^-)$ and decreasing on $(t_0^-,\infty)$ with $\lim_{t\to 0}R_-(t,s_0)=\lim_{t\to\infty}R_-(t,s_0)=-\infty$.
			\end{enumerate}
		Thus, for $ac>n_1$ ($ac<-n_1$, respectively), for each $s_0>0$, there are at most two positive solutions of $R_+(t,s_0)=ac$ ($R_-(t,s_0)=ac$, respectively).
		\end{enumerate}
				\begin{figure}[h!]
			\centering
			\includegraphics[scale=0.3]{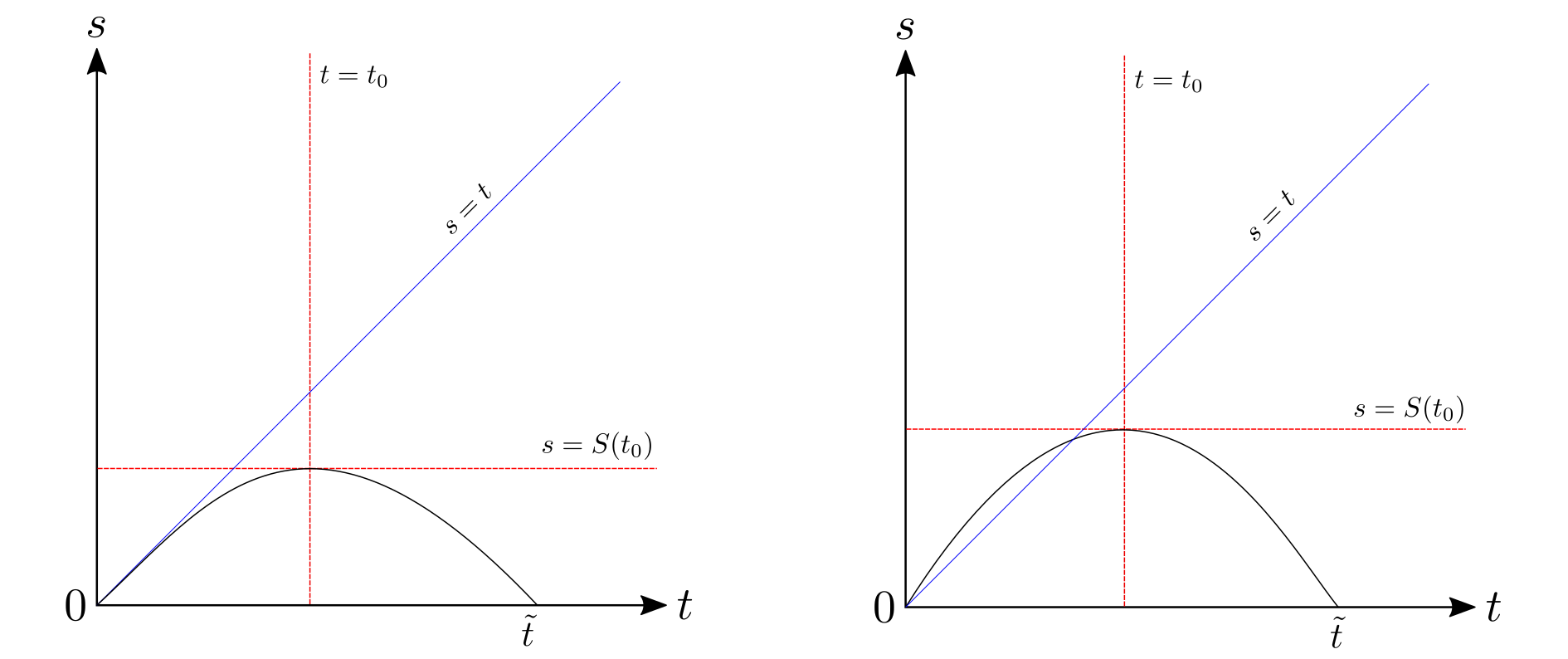}\caption{Zero set of $k(t,s)$ when $S(t_0)\le t_0$.}\label{fig:nrcase12}
		\end{figure}
	
		\begin{figure}[h!]
			\centering
			\includegraphics[scale=0.3]{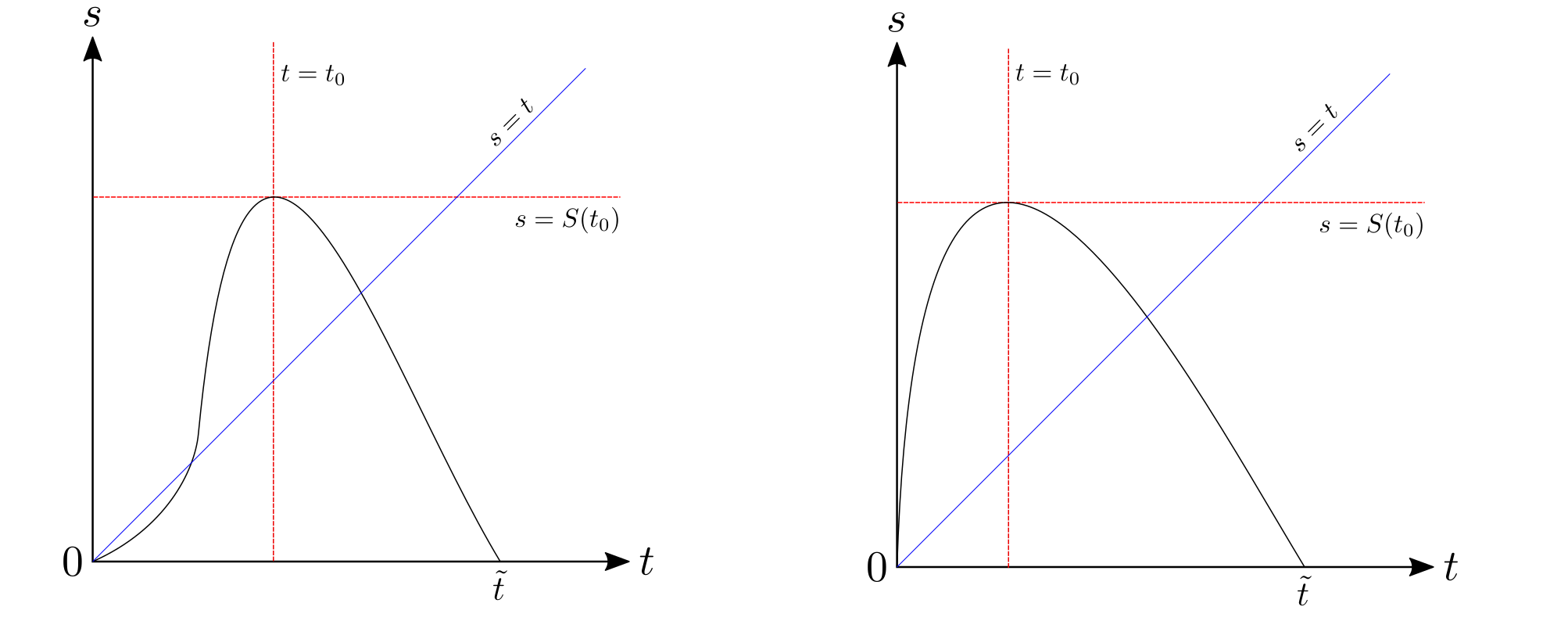}\caption{Zero set of $k(t,s)$ when $S(t_0)>t_0$.}\label{fig:nrcase34}
		\end{figure}
		Combining all the above observations, similar to the proof of Lemma~\ref{rr:lemma:uniquebulge}, we can conclude that when $\lvert ac\rvert>n_1$, $S$ has a unique maxima at $t_0\in(0,\tilde{t})$, say. When $ac>n_1$, the pair $(t_0,S(t_0))$ can be calculated as the unique nonzero solution $(t,s)\in\mathcal{C}$ satisfying $(\partial/\partial t)(R_+)(t,s)=0$, that is, \[
		n_1t \cosh\left(\left(\frac{q_2+p_2}{2}\right)s+n_1t\right)=\sinh\left(\left(\frac{q_2-p_2}{2}\right)s\right)+\sinh\left(\left(\frac{q_2+p_2}{2}\right)s+n_1t\right).
		\]
		When $ac<-n_1$, the pair $(t_0,S(t_0))$ can be calculated as the unique nonzero solution $(t,s)\in\mathcal{C}$ satisfying $(\partial/\partial t)(R_-)(t,s)=0$, that is, \[
		n_1t \cosh\left(\left(\frac{q_2+p_2}{2}\right)s+n_1t\right)=\sinh\left(\left(\frac{q_2+p_2}{2}\right)s+n_1t\right)-\sinh\left(\left(\frac{q_2-p_2}{2}\right)s\right).
		\]
		The above calculations show that the zero set of $k$ can be classified into four cases as shown in Figures~\ref{fig:nrcase12} and~\ref{fig:nrcase34}. If $S(t_0)\le t_0$, let $\tau=S(t_0)$; otherwise let $\tau>t_0$ be the unique solution of $k(t,t)=0$. Then taking $\lambda_2=\sqrt[4]{\frac{a^2}{c^2}{\rm{e}}^{-(q_2-p_2)\tau}}$, we have $f(\lambda_2,t,\tau)=k(t,\tau)$ for all $t>0$. Hence by the choice of $\tau$, we have $f(\lambda_2,t,\tau)<0$ for $t>\tau$ in each of the cases. Since, $f$ is decreasing in $s$, this implies $f(\lambda_2,t,s)<0$ for $t,s>\tau$.
	\end{proof}
	
	\begin{remark}[Optimization of $\tau_{2,1}$]\label{nr:optimization}
		Refer Remarks~\ref{nn:optimal} and \ref{nc:optimization} for notations. Let $P_2$ be the optimal choice that we obtained in Theorem~\ref{nr:thm21}. Since $\widetilde{M}(\epsilon_1)= P_2^{-1}\widetilde{P}_1^{(\epsilon_1)}=\begin{pmatrix}a &b+\epsilon_1 a\\c & d+\epsilon_1 c\end{pmatrix}$, $R=ac$ is invariant under choice of $P_1$. In Theorems~\ref{nr:thm12} and~\ref{nr:thm12<1/2}, for finding $K_{opt}$ we can minimize the function $\widetilde{K}(\epsilon_1)$ corresponding to $\widetilde{M}(\epsilon_1)$, which is given by $\widetilde{K}(\epsilon_1)=\left(a^2+(b+\epsilon_1 a)^2\right)\left(c^2+(d+\epsilon_1 c)^2\right)$.
	\end{remark}

	\section{Comparison with existing literature}\label{sec:comparison}
	In this section, we will present several examples of bimodal planar switched linear systems of the form~\eqref{eq:system}. In each of the examples, we will compare the dwell time bounds $\tau_{1,2},\tau_{2,1},\tau$ obtained in this paper with existing bounds given in~\cite{agarwal2018simple,morse1996supervisory,geromel2006stability,karabacak2009dwell}. The notations $\tau_{M}$, $\tau_{GC}$ and $\tau_{Kar}$ are used for the the dwell time bounds obtained in~\cite{geromel2006stability},~\cite[Theorem 2]{karabacak2009dwell}, and~\cite{morse1996supervisory}, respectively. \\
	The bound $\tau_M$ is given by
	\begin{eqnarray*}
		\tau_M &=& \max_{i=1,2}\ \ \inf_{\alpha>0, \beta>0}\ \ \{\alpha/\beta\ \vert \ \Vert e^{A_it}\Vert<e^{\alpha-\beta t}, \ t\ge 0\}.
	\end{eqnarray*}
For $i,j=1,2$, consider the following set of matrix inequalities
	\begin{eqnarray}\label{eq:GC}
		A_i^T P_i +P_iA_i &<& 0, \nonumber \\
		e^{A_i^T S}P_j e^{A_i S}-P_i &<& 0, \ \ \text{ for } i\ne j.
	\end{eqnarray}
	Then the dwell time bound $\tau_{GC}$ is given by
	\begin{eqnarray*}
		\tau_{GC} &=& \min_{S>0,\ P_1>0, P_2>0} \{S\ \vert\ \text{the set of inequalities~\eqref{eq:GC} are satisfied}\}.	
	\end{eqnarray*}
The dwell time bound $\tau_{GC}$ is the solution of an optimization problem constrained by certain linear matrix inequalities (LMI) and can be computed using the Robust Control Toolbox in MATLAB 2021a. The code in~\cite{horvath2018algorithm} can be designed to compute $\tau_{GC}$ by setting certain matrices in the LMI zero. \\
For the bound $\tau_{Kar}$, if none of the matrices $A_1,A_2$ are defective, let $J_i$ be the complex Jordan form of $A_i$ and $P_i$ be an invertible matrix with both the columns having unit norm satisfying $A_i=P_iJ_iP_i^{-1}$, for $i=1,2$. Then 
	\begin{eqnarray*}
		\tau_{Kar} &=& \dfrac{\ln\left(\Vert P_1^{-1}P_2\Vert\ \Vert P_2^{-1}P_1\Vert\right)}{\lambda_1^*+\lambda_2^*},
	\end{eqnarray*}
where $\lambda_i^*$ denotes the absolute of the real part of the eigenvalue of $A_i$ closer to the imaginary axis. \\
When one of the subsystem matrices, say $A_1$ is defective, $\tau_{Kar}$ is calculated by extending the result in \cite{karabacak2009dwell} using the inequality $\|{\rm{e}}^{J_1 t}\|\le c_{\theta_1} {\rm{e}}^{-\theta_1 t}$, where $0<\theta_1<\lambda_1^*$. The dwell time for any $0<\theta_1<\lambda_1^*$ is \[
\tau_{Kar}=\dfrac{\ln\left(c_{\theta_1}\Vert P_1^{-1}P_2\Vert\ \Vert P_2^{-1}P_1\Vert\right)}{\theta_1+\lambda_2^*}.
\] 
Similarly when both subsystems are defective, an expression for dwell time can be obtained for any $0<\theta_i<\lambda_i^*$ for $i=1,2$ as
\[
\tau_{Kar}=\dfrac{\ln\left(c_{\theta_1}c_{\theta_2}\Vert P_1^{-1}P_2\Vert\ \Vert P_2^{-1}P_1\Vert\right)}{\theta_1+\theta_2}.
\]	
Note that the switched system~\eqref{eq:system} is asymptotically stable for all signals from the collection $S_{\tau}$ for $\tau=\tau_{M}, \tau_{GC}$ or $\tau>\tau_{Kar}$ (see~\cite[Theorem 3]{karabacak2009dwell}). However the switched system~\eqref{eq:system} is only stable for signals from the collection $S_{\tau_{Kar}}$ (see~\cite[Theorem 2]{karabacak2009dwell}).
 
See Table~\ref{table:example} for examples and comparison of dwell time bounds. The following subsystems will be used in the examples:	
	\[
	X_1=\begin{pmatrix} -0.2 & -5\\ 1 & -0.3\end{pmatrix}, X_2=\begin{pmatrix}-0.4& -1\\5 & -0.6\end{pmatrix}, X_3=\begin{pmatrix} 0 & 1\\ -2 & -1\end{pmatrix}, 
	\]
\[
X_4=\begin{pmatrix} 0 & 1\\ -9 & -1\end{pmatrix}, X_5=\nobreak \begin{pmatrix} 0.01 & -0.1\\ 0.231 & -0.31\end{pmatrix}, X_6=\frac{1}{13}\begin{pmatrix} -18 & 25\\ -1 & -8 \end{pmatrix}, 
\]
\[
X_7=\begin{pmatrix} -0.2 & 0\\ 0 & -0.42\end{pmatrix}, X_8=\frac{1}{18}\begin{pmatrix}-113 & -25\\361 & 77\end{pmatrix}, X_9=\begin{pmatrix}
-0.08 & -0.06\\ 0.04 & -0.22\end{pmatrix}.
\]
The matrices $X_1,X_2,X_3$ and $X_4$ have complex eigenvalues, $X_5$, $X_7$ and $X_9$ are real diagonalizable, and the matrices $X_6$ and $X_8$ are defective. \\	
For the pair of subsystems $X_6,X_3$; $X_8,X_3$; and $X_8,X_9$ (one of the subsystem matrices is defective), computations show that the least value of $\tau_{Kar}$ is achieved at $\theta_1=0.666076$, $\theta_1=0.763393$ and $\theta_1=0.833934$, respectively. For the pair $X_6,X_8$ (both subsystem matrices defective), the least value of $\tau_{Kar}$ is obtained at $(\theta_1,\theta_2)=(0.846682,0.935004)$.
\begin{table}[h!]
	\centering
	\begin{tabular}{ |c|c|c|c|c|c|c|c|c| } 
		\hline
		Subsystems $A_1,A_2$& $\tau_{M}$ & $\tau_{GC}$  & $\tau_{Kar}$ &  $\tau_{1,2}$ &$\tau_{2,1}$&$\tau=\min\{\tau_{1,2},\tau_{2,1}\}$ \\ 
		\hline
		$X_1$, $X_2$ & 3.24 & 2.1384  & 2.1472 &  2.1472 & 2.1472 & 2.1472 \\ 
		$X_3, X_4$ & 2.23 & 0.6222  & 0.8047 & 0.8047 & 0.8047 & 0.8047\\
		$X_5$, $X_7$ & 12.41 & 0.0001  & 6.3739 &  0.4120 & 0.0774 & 0.0774\\  
		$X_5, X_3$ & 12.41 & 0.5587  & 3.6803 & 3.4673 & 0.4768 & 0.4768\\
		$X_8, X_3$ & 4.59 & 1.9666  & 3.7175 &  1.6046 & 2.2714 & 1.6046\\
		$X_6,X_8$ &	4.59 &2.9681	& 3.6366	& 3.0348 &3.0348	& 3.0348 \\
		$X_8,X_9$ & 4.59 & 2.5882 & 5.6798 & 2.3769 & 4.54305 & 2.3769 \\
		\hline
	\end{tabular}
	\caption{Comparison of dwell time bounds.}
	\label{table:example}
\end{table}

\noindent For the pair of subsystems $X_3, X_4$, a better bound 0.6073 can be obtained using homogeneous polynomials, as in~\cite{chesi2010computing}. Observe the relationship $\tau_{1,2}, \tau_{2,1}\le \tau_{Kar}$ and $\tau\le \tau_M$ in Table~\ref{table:example}. Note that among these examples, the bound $\tau_{GC}$ is greater than $\tau$ for the pair of subsystems $X_5, X_3$; $X_8,X_3$; and $X_8,X_9$, and is smaller in the other examples.

The concept of simple loop dwell time was introduced in~\cite{agarwal2018simple}. As per Figure~\ref{fig:signal}, simple loop dwell time is a lower bound on each $t_i+s_i$ which ensures stability of the switched system~\eqref{eq:system}. It is given by \[\tau_{loop}=\dfrac{\ln\left(\Vert P_1^{-1}P_2\Vert\ \Vert P_2^{-1}P_1\Vert\right)}{\min(\lambda_1^*,\lambda_2^*)},\] 
where the notations are the same as in $\tau_{Kar}$. In Theorems~\ref{cc:thm12} and \ref{cc:thm21}, we had proved that the line $s=-(\alpha_1/\alpha_2)t+c_0$ bounds the zero set, refer Figure~\ref{fig:cc}. This implies that the zero set lies below the line $s+t=c_0\cdot\max(1,\alpha_2/\alpha_1)$. The Schur's function, thus, is negative in the region $\{(t,s)\colon\ s+t>c_0\cdot\max(1,\alpha_2/\alpha_1)\}$ and hence $c_0\cdot\max(1,\alpha_2/\alpha_1)$ works as a simple loop dwell time. Moreover this value is bounded above by $\tau_{loop}$. A similar argument can be used for the case discussed in Theorem~\ref{rc:thm12}. For cases discussed in Theorem~\ref{rc:thm21} and \ref{rr:thm12}, let $D_1, D_2$ be the optimal choices as in these results, then \[
\left\|M_{D_1,D_2}^{-1}{\rm{e}}^{J_2 s}\,M_{D_1,D_2} {\rm{e}}^{J_1 t} \right\|\le \left\|M^{-1}{\rm{e}}^{J_2 s}\,M{\rm{e}}^{J_1 t} \right\|\le \left\|M^{-1}\right\| \left\|M\right\|{\rm{e}}^{\min(\lambda_1^*,\lambda_2^*)(t+s)}.\] Hence, the value $\tau_0=\max\{s+t\colon\ f(t,s)=0\}$ becomes the simple loop dwell time, where $f$ is the Schur's function of either of the cases. Moreover, $\tau_0\le\tau_{loop}$. As an example, consider the pair of subsystems $X_5, X_3$. Refer to Figure~\ref{fig:loopcomparison} for the zero set of Schur's function $f_{5,3}$ corresponding to Section~\ref{sub:rc21}. Here $\tau_{loop}=22.0819$ and $\tau_0=11.8556$. \begin{figure}[h!]
	\centering
	\includegraphics[width=.4\textwidth]{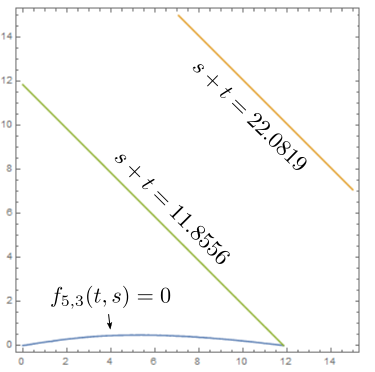}
	\caption{Comparison of simple loop dwell time for the pair $X_5, X_3$.}
	\label{fig:loopcomparison}
\end{figure}

		\section{Examples}\label{sec:examples}
			\begin{example}
			We will now show that when the subsystems $A_1$ and $A_2$ share a common eigenvector, then the switched system~\eqref{eq:system} is stable for any signal $\sigma$. Let us look at all the possibilities one by one.
			\begin{enumerate}
				\item \textit{Both $A_1$ and $A_2$ real diagonalizable}:\\ If these matrices share a common eigenvector, then one of the entries of transition matrix $M=P_2^{-1}P_1$ is zero for all possible Jordan matrices $P_1,P_2$. Hence the switched system~\eqref{eq:system} is stable for any signal $\sigma$ using  Proposition~\ref{rr:thm:ad=0,1}.
				\item \textit{One of the matrices has complex eigenvalues}: \\
				$A_1$ and $A_2$ can share a common eigenvector if both have complex eigenvalues. In this case $M=\pm \text{diag}(1,\pm 1)$ and hence the switched system~\eqref{eq:system} is stable for any signal $\sigma$ using Theorem~\ref{cc:thm12}. 
				\item \textit{$A_1$ defective and $A_2$ real diagonalizable}: \\
				If the subsystems share a common eigenvector, then either $a$ or $c$ entry is zero in the transition matrix  $M=P_2^{-1}P_1$ is zero for all possible Jordan matrices $P_1,P_2$. Hence the switched system~\eqref{eq:system} is stable for any signal $\sigma$ using Theorem~\ref{nr:thm21}.
				\item \textit{Both $A_1$ and $A_2$ defective}: \\
				The subsystem matrices $A_1$ and $A_2$ share a common eigenvector if and only if the entry $c$ is zero in the transition matrix  $M=P_2^{-1}P_1$ for all possible Jordan matrices $P_1,P_2$.\\				
				For $i=1,2$, let $P_i=\begin{bmatrix} \vec{u}_i & \vec{v}_i	\end{bmatrix}$, where $\vec{u}_1=\vec{u}_2$. For $\xi>0$, let $\widehat{P}_2=\begin{bmatrix} \vec{u}_2/\sqrt{\xi} & \sqrt{\xi}\, \vec{v}_2	\end{bmatrix}$. Then $A_2=\widehat{P}_2\widehat{J}_2\widehat{P}_2^{-1}$, where $\widehat{J}_2=\begin{pmatrix}
					-n_2 & \xi\\ 0 & -n_2
				\end{pmatrix}$. With $\widehat{P}_2$ as the eigenvector matrix, the new transition matrix $\widehat{M}$, say, has the $11$-entry as $a\sqrt{\xi}$ and the $21$-entry as 0. In this setting, $\Vert \widehat{M}e^{J_1t}\widehat{M}^{-1}e^{\widehat{J}_2s}\Vert \le \Vert \widehat{M}e^{J_1t}\widehat{M}^{-1}e^{-n_2s}\Vert \theta(\xi s)$. As argued in Section~\ref{sec:nnt21}, the final term is less than $1$ if and only if the conditions (C1) and (C2) are satisfied with $c=0$, $a$ replaced by $a\sqrt{\xi}$ and $n_2$ replaced by $n_2/\xi$. Now choose $\xi$ satisfying both $n_2/\xi>1/2$ and $a^2\xi/2\le n_1$. That is, $0<\xi<\min\{2n_2,2n_1/a^2\}$. Then by Theorem~\ref{nn:thm21>1/2}, the switched system~\eqref{eq:system} is stable for any signal $\sigma$.				
			\end{enumerate}
			
		\end{example}
		
	\begin{example}
		In this example, we give a generalization of the results discussed in this paper to a planar linear switched system with $N$ subsystems given by Hurwitz matrices.
		Let $A_1,\dots,A_N$ be planar Hurwitz matrices. Suppose the underlying graph determining the switching between subsystems is as shown in Figure~\ref{fig:petal}, where $A_1$ is at the center of the graph and $A_2,\dots,A_{N}$ are at the $N-1$ petals of the flower. Consider the switching signal $1 i_1 1 i_2 \dots 1 i_k \dots$, that is, for each $j\in\mathbb{N}$, $A_{i_j}$ is the $(2j)^{th}$ active subsystem where time $s_j>0$ is spent, and $A_1$ is active otherwise. The signal spends times $t_j$ in the subsystem $A_1$ at the $j^{th}$ instance. The flow of the switched system~\eqref{eq:system} is given by 
		\begin{eqnarray}
			x(t)=\prod_{j\in\mathbb{N}}\left({\rm{e}}^{A_{i_j}s_j}{\rm{e}}^{A_{1}t_j}\right)x(0)=\prod_{j\in\mathbb{N}}\left(P_1^{-1}P_{i_j}{\rm{e}}^{J_{i_j}s_j}P_{i_j}^{-1}P_1{\rm{e}}^{J_{1}t_j}\right)P_1^{-1}x(0).
		\end{eqnarray}
		\begin{figure}[h!]
			\centering
			\includegraphics[width=.4\textwidth]{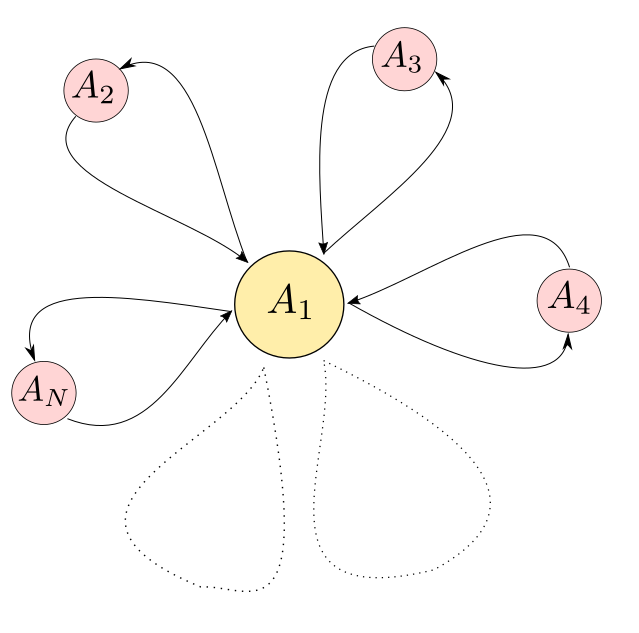}
			\caption{Underlying graph.}
			\label{fig:petal}
		\end{figure}
		In the right most expression, the terms are grouped in the same way as in our results for computing $\tau_{1,2}$. Let us look at the various possibilities for the subsystem matrix $A_1$.
		\begin{enumerate}
			\item \textit{$A_1$ defective}: \\
			For each $j=2,\dots,N$, calculate $\tau_{j}$ such that $\|P_1^{-1}P_{j}{\rm{e}}^{J_{j}s}P_{j}^{-1}P_1{\rm{e}}^{J_{1}t}\|<1$, for all $t,s>\tau_j$. For this, we can use relevant results from Sections~\ref{sec:NN},~\ref{sec:NC}, and~\ref{sec:NR} to compute $\tau_{1,2}$ with $A_1,A_j$ as the subsystems, denote it as $\tau_{j}$. Then for $\tau=\max_{j=2,\dots,N} \tau_{j}$, the switched system is stable for all signals $\sigma\in S_\tau$ and asymptotically stable for all signals $\sigma\in S'_{\tau}$.
			\item \textit{$A_1$ has complex eigenvalues}: \\
			For each $j=2,\dots,N$, calculate $\tau_{j}$ such that $\|P_1^{-1}P_{j}{\rm{e}}^{J_{j}s}P_{j}^{-1}P_1{\rm{e}}^{J_{1}t}\|<1$, for all $t,s>\tau_j$. For this, we can use relevant results from Sections~\ref{sec:CC},~\ref{sec:RC}, and~\ref{sec:NC} to compute $\tau_{1,2}$ with $A_1,A_j$ as the subsystems, denote it as $\tau_{j}$. Then for $\tau=\max_{j=2,\dots,N} \tau_{j}$, the switched system is stable for all signals $\sigma\in S_\tau$ and asymptotically stable for all signals $\sigma\in S'_{\tau}$.
			\item \textit{$A_1$ real diagonalizable}: \\
			Suppose $D_1=\text{diag}(\lambda_1,1/\lambda_1)$ be the scaling matrix corresponding to $A_1$. The curve $\|D_1^{-1}P_1^{-1}P_{j}{\rm{e}}^{J_{j}s}P_{j}^{-1}P_1D_1{\rm{e}}^{-p_1 t}\|=1$ bounds the zero set of the Schur's function since $\|D_1^{-1}P_1^{-1}P_{j}{\rm{e}}^{J_{j}s}P_{j}^{-1}P_1D_1{\rm{e}}^{J_{1}t}\|\le \|D_1^{-1}P_1^{-1}P_{j}{\rm{e}}^{J_{j}s}P_{j}^{-1}P_1D_1{\rm{e}}^{-p_1 t}\|$. Let $P_j^{-1}P_1=\begin{pmatrix}
				a_j & b_j\\
				c_j & d_j\\
			\end{pmatrix}$ has determinant $1$, for each $j=2,\ldots,n$. Then the curve $\|D_1^{-1}P_1^{-1}P_{j}{\rm{e}}^{J_{j}s}P_{j}^{-1}P_1D_1{\rm{e}}^{-p_1 t}\|=1$ is parametrized as follows: 
			\begin{enumerate}
				\item \textit{$A_j$ has complex eigenvalues}: Using results from Section~\ref{sec:RC}, $T_{\lambda_1,j}(s)=-\frac{\alpha_j}{p_1}s+\frac{1}{p_1}\cosh^{-1}\left(\frac{a_j^2+c_j^2}{2}\lambda_1^2+\frac{b_j^2+d_j^2}{2\lambda_1^2}\right)$. In this case, $\tau_j(\lambda_1)$ equals the unique fixed point of $T_{\lambda_1,j}$.
				\item \textit{$A_j$ defective}: Using results from Section~\ref{sec:NR}, $T_{\lambda_1,j}(s)=-\frac{n_j}{p_1} s+\frac{1}{p_1}\sinh^{-1}\left(\frac{d_j^2}{2\lambda_1^2}+\frac{c_j^2}{2}\lambda_1^2\right)s$.\\ 
				Let $K(\lambda_1)=\frac{d_j^2}{2\lambda_1^2}+\frac{c_j^2}{2}\lambda_1^2 $, \begin{enumerate}
					\item if $K(\lambda_1)\le n_j$, $\tau_j(\lambda_1)=0$.
					\item if $K(\lambda_1)>n_j$, let $s_0=\sqrt{(1/n_j^2)-1/(K(\lambda_1))^2}$.\\ 
					If $s_0\ge T_{\lambda_1,j}(s_0)$, $\tau_j(\lambda_1)=T_{\lambda_1,j}(s_0)$; otherwise $\tau_j(\lambda_1)$ is the unique fixed point of $T_{\lambda_1,j}$.
				\end{enumerate}
				\item \textit{$A_j$ real diagonalizable}: Using results from Section~\ref{sec:RR}, $T_{\lambda_1,j}(s)=-\left(\frac{q_2+p_2}{2p_1}\right)s+\frac{1}{p_1}\sinh^{-1}\left(\sqrt{R(\lambda_1)}\sinh\left(\frac{q_2-p_2}{2}\right)s\right)$, where $R(\lambda_1)=a_j^2d_j^2+b_j^2c_j^2+\frac{b_j^2d_j^2}{\lambda_1^4}+a_j^2c_j^2\lambda_1^4$. \begin{enumerate}
					\item if $R(\lambda_1)\le (q_2+p_2)/(q_2-p_2)$, $\tau_j(\lambda_1)=0$.
					\item If $R(\lambda_1)>(q_2+p_2)/(q_2-p_2)$, let $s_0$ be the unique solution $s>0$ of \[
					\tanh\left(\frac{q_2-p_2}{2}\right)s=\frac{R(\lambda_1)\left(\frac{q_2-p_2}{q_2+p_2}\right)-1}{R(\lambda_1)-1}.
					\]\\ If $s_0\ge T_{\lambda_1,j}(s_0)$, $\tau_j(\lambda_1)=T_{\lambda_1,j}(s_0)$; otherwise $\tau_j(\lambda_1)$ is the unique fixed point of $T_{\lambda_1,j}$.
				\end{enumerate}
			\end{enumerate}
			The proof of (a) is straightforward since $T_{\lambda_1,j}$ is linear in $s$ with a negative slope. The proofs of (b) and (c) rely on the fact that $T_{\lambda_1,j}$ is concave down in each of these cases and hence has a unique maxima.\\
			Thus we need to find a fixed scaling matrix $D_1$ (that is, a choice of $\lambda_1$) which solves $\tau=\min_{\lambda_1} \max_{j=2, \dots,N}\tau_j(\lambda_1)$. The inequality used in the beginning of this case,\\ $\|D_1^{-1}P_1^{-1}P_{j}{\rm{e}}^{J_{j}s}P_{j}^{-1}P_1D_1{\rm{e}}^{J_{1}t}\|\le \|D_1^{-1}P_1^{-1}P_{j}{\rm{e}}^{J_{j}s}P_{j}^{-1}P_1D_1{\rm{e}}^{-p_1 t}\|$, is strict since $p_1\ne q_1$, the switched system is asymptotically stable for all signals $\sigma\in S_\tau$.
		\end{enumerate}
	
\end{example}

\begin{appendix}
	
	\section{Proofs of some results stated in Section~\ref{sec:RR}}\label{ap:proof}
	
	\begin{proof}[Proof of Lemma~\ref{rr:lemma:ad:notinI}]\label{rr:lemma:ad:notinI:proof}
	When $ad>q_2/(q_2-p_2)$ for each value of $s$, say $s_1>0$, analyze the function $k(t,s_1)$. Observe that $k(t,s_1)=0$ if and only if $ad=\ell_-(t,s_1)$. Also		
		\begin{equation*}
			\frac{\partial \ell_-}{\partial t} (t,s)=\frac{p_1{\rm{e}}^{p_1 t}(1+{\rm{e}}^{p_2s+q_1 t})(1+{\rm{e}}^{q_1t+q_2s})+q_1 {\rm{e}}^{q_1 t}({\rm{e}}^{p_1 t+p_2 s}-1)({\rm{e}}^{p_1 t+q_2 s}-1)}{({\rm{e}}^{p_1t}+{\rm{e}}^{q_1t})^2\,({\rm{e}}^{q_2 s}-{\rm{e}}^{p_2 s})}.
		\end{equation*}
		Since $k(t,0)=-\left(1-{\rm e}^{-2p_1 t}\right)\left(1-{\rm e}^{-2q_1 t}\right)$, $k(t,0)<0$ for all $t>0$. From Lemma~\ref{rr:lemma:k0s}, $k(0,s)>0$ for all $s\in(0,s_0)$. Thus, for any $s_1\in(0,s_0)$, we have $ad>\ell_-(t,s_1)$. Further since $\ell_-$ is increasing in $t$, there exists a unique $t_1$ such that $ad=\ell_-(t_1,s_1)$, that is, $k(t_1,s_1)=0$.
		
		Similarly, it can be shown that $k(t,s_0)<0$ for all $t>0$; and $k(t,s_1)<0$ for all $t\ge 0$, $s_1>s_0$. Hence we obtain the result.\\
		
		When $ad<-p_2/(q_2-p_2)$, the proof is similar. For each value of $s$, say $s_1$, analyze the function $k(t,s_1)$. We get $k(t,s_1)=0$ if and only if $ad=\ell_+(t,s_1)$. For all $s>0$, $\ell_+(t,s)$ is strictly decreasing in $t$ since
		
		\begin{equation*}
			\frac{\partial \ell_+}{\partial t} (t,s)=-\frac{p_1{\rm{e}}^{p_1 t}({\rm{e}}^{p_2s+q_1 t}-1)({\rm{e}}^{q_1t+q_2s}-1)+q_1 {\rm{e}}^{q_1 t}({\rm{e}}^{p_1 t+p_2 s}+1)({\rm{e}}^{p_1 t+q_2 s}+1)}{({\rm{e}}^{p_1t}+{\rm{e}}^{q_1t})^2\,({\rm{e}}^{q_2 s}-{\rm{e}}^{p_2 s})}.
		\end{equation*}
		
		Now, note that for all $s_1\in (0,s_0)$, $\lim_{t\to 0}\,\ell_+(t,s)=-p_2/(q_2-p_2)$ and since $ad<-p_2/(q_2-p_2)$, there exists exactly one point $t_1$ such that $ad=\ell_+(t_1,s_1)$, since $\ell_+$ is decreasing in $t$. Again, for $s\in\{0,s_0\}$, there is exactly one solution. However for $s_1>s_0$, there is no solution.
	\end{proof}
	
	\begin{proof}[Intermediate details of proof of Lemma~\ref{rr:zero:IFT}]
		For each $s\in(0,s_0)$, there exists $t_s>0$ (depending on $s$) such that $ad=\ell_-(t_s,s)$. Since $\left(\partial \ell_-/\partial t\right)(t_s,s)>0$, by the implicit function theorem, there exists a neighborhood $U_s\subset (0,s_0)$ containing $s$ and a $\mathcal{C}^1$ function $\mathcal{O}_s\colon\, U_s\to \mathbb{R}$ such that $\ell_-(\mathcal{O}_s(y),y)=ad$, for all $y\in U_s$. 
		
		Moreover, by Lemma~\ref{rr:lemma:ad:notinI}, $\mathcal{O}_{s_1}(s)=\mathcal{O}_{s_2}(s)$ for all $s\in U_{s_1}\cap U_{s_2}$. Since $(0,s_0)=\cup_s\ U_s$, one can define a $\mathcal{C}^1$ function $\mathcal{O}\colon\, (0,s_0)\to\mathbb{R}$ as $\mathcal{O}(s)=\mathcal{O}_s(s)$. Clearly this function satisfies $\ell_-(\mathcal{O}(s),s)=ad$ and hence $k(\mathcal{O}(s),s)=0$, for all $s\in(0,s_0)$.
			\end{proof}

	\begin{lemma}\label{sinuv}
		If $0<v<u$, $\sinh(tu)/\sinh(tv)$ is increasing in $t$.
	\end{lemma}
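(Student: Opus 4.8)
The plan is to reduce the monotonicity claim to a sign computation for a derivative. Fix $0<v<u$ and set $h(t)=\sinh(tu)/\sinh(tv)$ for $t>0$; since $\sinh(tv)>0$ on $(0,\infty)$ this quotient is well defined and differentiable there, so it suffices to show $h'(t)>0$ for all $t>0$. Differentiating,
\[
h'(t)=\frac{u\cosh(tu)\sinh(tv)-v\sinh(tu)\cosh(tv)}{\sinh^2(tv)},
\]
so the assertion is equivalent to positivity of the numerator $N(t)=u\cosh(tu)\sinh(tv)-v\sinh(tu)\cosh(tv)$ on $(0,\infty)$.

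Next I would prove $N(t)>0$ by a value-at-zero-plus-derivative argument. Clearly $N(0)=0$. Differentiating and cancelling the two $uv\cosh(tu)\cosh(tv)$ terms that appear with opposite signs leaves
\[
N'(t)=(u^2-v^2)\sinh(tu)\sinh(tv),
\]
which is strictly positive for $t>0$ since $u^2-v^2>0$ and both hyperbolic sines are positive there. Hence $N$ is strictly increasing on $[0,\infty)$ with $N(0)=0$, so $N(t)>0$ for $t>0$, which gives $h'(t)>0$ and the lemma. (An alternative route: using the product-to-sum identities one rewrites $2N(t)=(u-v)\sinh\big((u+v)t\big)-(u+v)\sinh\big((u-v)t\big)$, and then positivity follows from the fact that $x\mapsto\sinh(x)/x$ is increasing on $(0,\infty)$, applied to $(u+v)t>(u-v)t>0$.)

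I do not anticipate a genuine obstacle here, as this is a short calculus fact; the only point worth stating carefully is that the $\cosh\cosh$ cross terms in $N'(t)$ cancel exactly, which is precisely what makes the sign of $N'$ transparent. I would also remark, for completeness, that the statement extends by continuity to $t=0$ with value $u/v$, though only the range $t>0$ is needed in the applications of this lemma.
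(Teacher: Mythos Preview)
Your argument is correct. Both you and the paper differentiate the quotient and reduce to showing the numerator is positive, but the final step differs: the paper factors the derivative so that its sign is governed by the bracket $q(2tu)-q(2tv)$ with $q(x)=x(e^{x}+1)/(e^{x}-1)=x\coth(x/2)$, and then proves separately that $q$ is increasing by checking $q'(x)=(e^{2x}-2xe^{x}-1)/(e^{x}-1)^2>0$. You instead differentiate the numerator $N(t)$ directly and exploit the exact cancellation of the $uv\cosh(tu)\cosh(tv)$ cross terms to get $N'(t)=(u^2-v^2)\sinh(tu)\sinh(tv)$, which settles the sign in one stroke. Your route is shorter and avoids introducing an auxiliary single-variable function; the paper's route has the minor advantage of isolating the reusable fact that $x\mapsto x\coth x$ is increasing, which is essentially what underlies both arguments (indeed $N(t)=\sinh(tu)\sinh(tv)\big(u\coth(tu)-v\coth(tv)\big)$). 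Your alternative product-to-sum rewriting $2N(t)=(u-v)\sinh\big((u+v)t\big)-(u+v)\sinh\big((u-v)t\big)$ is also valid and gives yet another clean way to read off the sign.
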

	\begin{proof} Let $u>v>0$, then 
		\begin{align*}
			\left(\frac{\sinh (tu)}{\sinh (tv)}\right)'=\left(\frac{{\rm{e}}^{tu}-{\rm{e}}^{-tu}}{{\rm{e}}^{tv}-{\rm{e}}^{-tv}}\right)'=\frac{{\rm{e}}^{t(v-u)}({\rm{e}}^{2tv}-1)({\rm{e}}^{2tu}-1)}{2t({\rm{e}}^{2tv}-1)^2}\left[\frac{2tu\,({\rm{e}}^{2tu}+1)}{{\rm{e}}^{2tu}-1}-\frac{2tv\,({\rm{e}}^{2tv}+1)}{{\rm{e}}^{2tv}-1}\right].
		\end{align*}
		Consider the following function $q(x)=\frac{x({\rm{e}}^{x}+1)}{{\rm{e}}^{x}-1}$. Then $q'(x)=\frac{{\rm{e}}^{2x}-2x{\rm{e}}^{x}-1}{({\rm{e}}^{x}-1)^2}$. Note that $q'(0)=0$ and $q'(x)>0$ for all $x>0$. Hence $q(x)$ is an increasing function for $x>0$. Therefore for all $t>0$, $
		\left(\frac{\sinh (tu)}{\sinh (tv)}\right)'>0$.
	\end{proof}
	
	\begin{proof}[Proof of Theorem~\ref{rr:lemma:uniquebulge}]\label{rr:lemma:uniquebulge:proof} Recall the notation $\mathcal{C}=\{(t,s)\in Q_1\colon\ k(t,s)=0\}$. \\
		For $ad>q_2/(q_2-p_2)$, we have $\mathcal{C}=\{(t,s)\in Q_1\colon\ \ell_-(t,s)=ad\}$ since $\ell_+$ is negative on $Q_1$. Since 
		\[
		\frac{\partial \ell_-}{\partial s} (t,s)=\frac{-q_2\,{\rm e}^{q_2 s}\,({\rm e}^{p_2 s+p_1 t}-1)({\rm e}^{p_2 s+q_1 t}+1)+p_2\, {\rm e}^{p_2 s}\,({\rm e}^{q_2 s+p_1 t}-1)({\rm e}^{q_2 s+q_1 t}+1)}{({\rm e}^{p_2 s}-{\rm e}^{q_2 s})^2\,({\rm e}^{p_1 t}+{\rm e}^{q_1 t})},
		\]
		when $s\ne 0$, $(\partial \ell_-/\partial s) (t,s)<0$ if and only if		
		\begin{eqnarray}\label{ltsiff}
			\frac{q_2}{p_2}>\frac{({\rm e}^{p_1 t}-{\rm e}^{-q_2 s})({\rm e}^{q_1 t}+{\rm e}^{-q_2 s})}{({\rm e}^{p_1 t}-{\rm e}^{-p_2 s})({\rm e}^{q_1 t}+{\rm e}^{-p_2 s})}{\rm e}^{(q_2-p_2)s}=L(t,s).
		\end{eqnarray}
		The function $L(0,s)=\sinh q_2 s/\sinh p_2 s$ is strictly increasing by Lemma~\ref{sinuv}. We will now show that for any $t_0>0$, the function $L(t_0,s)$ is increasing in $s$. Further for $t_0>0$, we rewrite $L(t_0,s)=R_1(t_0,s) R_2(t_0,s)$, where
		\[
		R_1(t,s)=\frac{{\rm e}^{\frac{1}{2}q_2 s+p_1 t}-{\rm e}^{-\frac{1}{2}q_2 s}}{{\rm e}^{\frac{1}{2}p_2 s+p_1 t}-{\rm e}^{-\frac{1}{2}p_2 s}}, \ R_2(t,s)=\frac{{\rm e}^{\frac{1}{2}q_2 s+q_1 t}+{\rm e}^{-\frac{1}{2}q_2 s}}{{\rm e}^{\frac{1}{2}p_2 s+q_1 t}+{\rm e}^{-\frac{1}{2}p_2 s}}.
		\]
		We will now prove that both the functions $R_1$ and $R_2$ are increasing in $s>0$ and hence $L$ is increasing in $s>0$. It can be checked that $(\partial R_1/\partial s)(t,s)>0$ if and only if
		\begin{eqnarray*}
			\frac{q_2-p_2}{2}\left({\rm e}^{\frac{1}{2}(q_2+p_2)s+2p_1 t}-{\rm e}^{-\frac{1}{2}(q_2+p_2)s} \right)>\frac{q_2+p_2}{2}\left({\rm e}^{\frac{1}{2}(q_2-p_2)s+p_1 t}-{\rm e}^{-\frac{1}{2}(q_2-p_2)s+p_1 t} \right).
		\end{eqnarray*}
		Multiplying ${\rm e}^{-p_1 t}$ on both sides, we have
		\begin{eqnarray*}
			(q_2-p_2)\sinh\left(\frac{1}{2}(q_2+p_2)s+p_1 t\right)>(q_2+p_2)\sinh\left(\frac{1}{2}(q_2-p_2)s\right).
		\end{eqnarray*}
		Clearly, the inequality holds true for any $t>0$ and $s=0$ and the derivative with respect to $s$ of the left hand side is greater than that of the right hand side for all $s\ge 0$. This implies that the inequality holds true for all $t,s>0$. Hence, for all $t_0>0$, $(\partial R_1/\partial s)(t_0,s)$ is an increasing function of $s$.  
		Similarly, it can be shown that $(\partial R_2/\partial s)(t,s)>0$ if and only if		
		\begin{eqnarray*}
			(q_2-p_2)\sinh\left(\frac{1}{2}(q_2+p_2)s+q_1 t\right)>-(q_2+p_2)\sinh\left(\frac{1}{2}(q_2-p_2)s\right),
		\end{eqnarray*}
		which is true for all $t,s>0$. Thus, for all $t>0$, $(\partial R_2/\partial s)(t,s)$ is increasing in $s$. Thus $L(t,s)$, being a product of two positive increasing functions in $s$, is increasing in $s$; and $\lim_{s\to\infty} L(t,s)=\infty$.
		Now, for all $t_0>0$, $L(t_0,0)=1<q_2/p_2$ and since the function $L(t_0,s)$ is increasing in $s$, there exists a unique $r_0>0$ such that $L(t_0,r_0)=q_2/p_2$. Hence, by~\eqref{ltsiff}, we have 
		\begin{eqnarray}
			\frac{\partial \ell_-}{\partial s} (t_0,s)\begin{cases}<0,\hspace{10pt}s\in[0,r_0)\\
				=0,\hspace{10pt}s=r_0\\
				>0,\hspace{10pt}s\in(r_0,\infty). \end{cases}
		\end{eqnarray}
		Thus, for a fixed $t_0$, we have
		\begin{enumerate}
			\item If $\ell_-(t_0,r_0)=ad$, then unique root of $k(t_0,s)$.
			\item If $\ell_-(t_0,r_0)<ad$, then two roots of $k(t_0,s)$.
			\item If $\ell_-(t_0,r_0)>ad$, then no root of $k(t_0,s)$.
		\end{enumerate}
		This implies that $\mathcal{O}$, as defined in Lemma~\ref{rr:zero:IFT}, parametrizing the curve $\mathcal{C}$, has a unique local maxima, say at $\tilde{s}$. The point $(\mathcal{O}(\tilde{s}),\tilde{s})$ is given by the unique nonzero solution $(t,s)$ of the system
		\begin{eqnarray}
			L(t,s)=\frac{q_2	}{p_2}, \ \ k(t,s)=0.
		\end{eqnarray}
		When $ad<-p_2/(q_2-p_2)<0$, $k(t,s)=0$ if and only if $\ell_+(t,s)=ad$. Now
		$$\frac{\partial \ell_+}{\partial s} (t,s)=\frac{q_2\,{\rm e}^{q_2 s}\,({\rm e}^{p_2 s+p_1 t}+1)({\rm e}^{p_2 s+q_1 t}-1)-p_2\, {\rm e}^{p_2 s}\,({\rm e}^{q_2 s+p_1 t}+1)({\rm e}^{q_2 s+q_1 t}-1)}{({\rm e}^{p_2 s}-{\rm e}^{q_2 s})^2\,({\rm e}^{p_1 t}+{\rm e}^{q_1 t})},$$	
		when $s\ne 0$. Hence $(\partial \ell_+/\partial s) (t,s)>0$ if and only if
		\begin{eqnarray*}
			\frac{q_2}{p_2}>\frac{{\rm e}^{\frac{1}{2}q_2 s+q_1 t}-{\rm e}^{-\frac{1}{2}q_2 s}}{{\rm e}^{\frac{1}{2}p_2 s+q_1 t}-{\rm e}^{-\frac{1}{2}p_2 s}}\cdot \frac{{\rm e}^{\frac{1}{2}q_2 s+p_1 t}+{\rm e}^{-\frac{1}{2}q_2 s}}{{\rm e}^{\frac{1}{2}p_2 s+p_1 t}+{\rm e}^{-\frac{1}{2}p_2 s}}=\widetilde{L}(t,s).
		\end{eqnarray*}
	The function $\widetilde{L}(t,s)$ is a product of two positive increasing functions in $s$, hence is increasing in $s$. Also, for all $t_0>0$, $\widetilde{L}(t_0,0)=1<q_2/p_2$. Also note that $\lim_{s\to\infty}\,\widetilde{L}(t_0,s)=\infty$. Since the function $\widetilde{L}(t_0,s)$ is increasing in $s$, there exists a unique $s_0$ such that $\widetilde{L}(t_0,s_0)=q_2/p_2$. Hence, we have 		
		\begin{eqnarray}
			\frac{\partial \ell_+}{\partial s} (t_0,s)\begin{cases}>0,\hspace{10pt}s\in[0,s_0)\\
				=0,\hspace{10pt}s=s_0\\
				<0,\hspace{10pt}s\in(s_0,\infty). \end{cases}
		\end{eqnarray}
		Thus, for a fixed $t_0$, we have		
		\begin{enumerate}
			\item If $\ell_-(t_0,s_0)=ad$, then unique root of $k(t_0,s)$.
			\item If $\ell_-(t_0,s_0)>ad$, then two roots of $k(t_0,s)$.
			\item If $\ell_-(t_0,s_0)<ad$, then no root of $k(t_0,s)$.
		\end{enumerate}
		Again, this implies that $\mathcal{O}$ has a unique local maxima, say at $\tilde{s}$. The point $(\mathcal{O}(\tilde{s}),\tilde{s})$ is given by the unique nonzero solution $(t,s)$ of the system
		\begin{eqnarray}
			\widetilde{L}(t,s)=\frac{q_2}{p_2},\ \	k(t,s)=0.
		\end{eqnarray}
		
	\end{proof}

	\begin{proof}[Proof of Lemma~\ref{rr:direction:k}]\label{rr:direction:k:proof}
		Define $k^{(\gamma)}(t)=k(t,\gamma t)$ and $\ell^{(1)}_-(t)=\ell_-(t,t)$ for all $\gamma>0$ and $t>0$. Note that $\ell^{(1)}_-$ is increasing in $t$. Consider the function $\ell_-(t,s)$ for $(t,s)\ne(0,0)$, in the direction $s=\gamma t$. Denote this function by $\ell_-^{(\gamma)}(t)=\ell_-(t,\gamma t)$. This function is the same as $\ell^{(1)}_-(t)$ with just $p_2$ and $q_2$ replaced by $p_2\gamma$ and $q_2\gamma$, respectively. Since $\ell^{(1)}_-$ is increasing in $t$, so is $\ell_-^{(\gamma)}$. Observe that $k^{(\gamma)}(0)=0$ for all $\gamma$. Also for $t\ne 0$, $k^{(\gamma)}(t)=k(t,\gamma t)=0$ if and only if $ad=\ell_-^{(\gamma)}(t)$.\\
		If $ad\le\lim_{t\to 0}\ell_-^{(\gamma)}(t)$ for some $\gamma$, then since the function $\ell^{(\gamma)}_-$ is increasing in $t>0$, we get $ad<\ell_-^{(\gamma)}(t)$ for all $t>0$ and there is no positive root of the function $k^{(\gamma)}(t)$.\\ 
		On the other hand, if $ad>\lim_{t\to 0}\ell_-^{(\gamma)}(t)$ for some $\gamma$, then there is a unique positive root $t_{max}^{(\gamma)}$ of the function $k^{(\gamma)}(t)$. Also $k^{(\gamma)}(t)>0$ when $t<t_{max}^{(\gamma)}$ and $k^{(\gamma)}(t)<0$ when $t>t_{max}^{(\gamma)}$.
	\end{proof}

	\begin{proof}[Proof of Lemma \ref{rr:lemma:f}]\label{rr:lemma:f:proof}
Since
		\[
		f(\lambda_0,t,s)-k(t,s)=\left(\frac{b^2d^2}{\lambda_0^4}{\rm{e}}^{-2q_1 t}+a^2c^2\lambda_0^4{\rm{e}}^{-2p_1 t}-2\lvert ad(ad-1)\rvert {\rm{e}}^{-(q_1+p_1)t}\right)({\rm{e}}^{-p_2 s}-{\rm{e}}^{-q_2 s})^2,
		\]
		and $\lambda_0=\sqrt[8]{\frac{b^2d^2}{a^2c^2}\, {\rm{e}}^{-2(q_1-p_1)t_0}}$, using the fact that $ad-bc=1$, we have
		\[
		f(\lambda_0,0,s)=k(0,s)+\lvert ad(ad-1)\rvert \left( \frac{1}{\sqrt{{\rm{e}}^{-(q_1-p_1)t_0}}}-\sqrt{{\rm{e}}^{-(q_1-p_1)t_0}}\right)^2 ({\rm{e}}^{-p_2 s}-{\rm{e}}^{-q_2 s})^2.
		\]
		Since $ad(ad-1)>0$ when $ad\notin \mathcal{I}$, using the expression for $k(0,s)$ from Lemma~\ref{rr:lemma:k0s},
		\begin{align*}
		f(\lambda_0,0,s) = 	\left({\rm{e}}^{-p_2 s}-{\rm{e}}^{-q_2 s}\right)^2\left[ad(ad-1)\left\lbrace 4+ \left( \frac{1}{\sqrt{{\rm{e}}^{-(q_1-p_1)t_0}}}-\sqrt{{\rm{e}}^{-(q_1-p_1)t_0}}\right)^2\right\rbrace-m(0,s)\right],
		\end{align*}
		where $m(t,s)$ is as defined in Theorem~\ref{rr:thm:ad=0,1}.  We conclude that for all values of $ad\notin \mathcal{I}$, 		
		\begin{eqnarray*}
			\lim_{s\to 0} \dfrac{f(\lambda_0,0,s)}{\left({\rm{e}}^{-p_2 s}-{\rm{e}}^{-q_2 s}\right)^2}>\lim_{s\to 0}\left(4ad(ad-1)-m(0,s)\right)>0,
		\end{eqnarray*}
		where the last inequality is proved in Lemma~\ref{rr:lemma:k0s}. Also, since the function $m(0,s)$ is increasing with $\lim_{s\rightarrow\infty}m(0,s)=\infty$, there is a unique $\widetilde{s_0}>0$ such that $f(\lambda_0,0,\widetilde{s_0})=\nobreak 0$. Moreover $f(\lambda_0,0,s)>0$ for all $s\in(0,\widetilde{s_0})$.
	\end{proof}
	
\end{appendix}

	\bibliographystyle{siam}
	\bibliography{References}
\end{document}